\documentclass[a4paper,draft]{amsproc}
\usepackage{amssymb}
\usepackage{amscd}

\theoremstyle{plain}
 \newtheorem{thm}{Theorem}[section]
 \newtheorem{prop}{Proposition}[section]
 \newtheorem{lem}{Lemma}[section]
 \newtheorem{cor}{Corollary}[section]
\theoremstyle{definition}
 \newtheorem{exm}{Example}[section]
 \newtheorem{rem}{Remark}[section]
\newtheorem{dfn}{Definition}[section]
\newtheorem{assumption}{Assumption}[section]
\newtheorem{conjecture}{Conjecture}[section]
\numberwithin{equation}{section}

\renewcommand{\leq}{\leqslant} \renewcommand{\geq}{\geqslant}
\renewcommand{\setminus}{\smallsetminus}

\newcommand{\A}{\mathrm{ A}}
\newcommand{\R}{\mathbb{R}}
\newcommand{\reg}{\mathrm{reg\;}}
\newcommand{\ddim}{\mathrm{ddim\;}}
\newcommand{\dind}{\mathrm{dind\;}}
\newcommand{\corank}{\mathrm{corank\;}}

\newcommand{\FF}{\mathcal{F}}
\newcommand{\OO}{\mathcal{O}}
\newcommand{\UU}{\mathcal{U}}
\newcommand{\T}{\mathbb{T}}
\newcommand{\g}{\mathfrak{g}}

\DeclareMathOperator{\ann}{\mathrm{ann}}
\newcommand{\ad}{\mathrm{ad}}
\DeclareMathOperator{\Ad}{\mathrm{Ad}}
\DeclareMathOperator{\Span}{\mathrm{span\,}}
\DeclareMathOperator{\diag}{\mathrm{diag}}
\DeclareMathOperator{\rank}{\mathrm{rank}}
\DeclareMathOperator{\pr}{\mathrm{pr}}

\setlength{\textwidth}{28cc} \setlength{\textheight}{42cc}

\title[Symmetries and Integrability]{SYMMETRIES AND INTEGRABILITY
\footnote{\scriptsize PUBLICATIONS DE L'INSTITUT MATH\'EMATIQUE
Nouvelle s\'erie, tome 84(98) (2008), 1--36} \footnote {DOI:
10.2298/PIM0898001J}}

\subjclass[2000]{70H06, 37J35, 53D25}
\author[Jovanovi\'c]{\bfseries Bo\v zidar Jovanovi\'c}

\address{
Mathematical Institute \\
Serbian Academy of Science and Art \\
Kneza Mihaila 36, 11000 Belgrade\\
Serbia}
\email{bozaj@mi.sanu.ac.rs}

\begin{document}

%\vspace{18mm} \setcounter{page}{1} \thispagestyle{empty}

\begin{abstract}
This is a survey on finite-dimensional integrable dynamical systems
related to Hamiltonian $G$-actions.
Within a framework of noncommutative integrability
we study integrability of $G$-invariant systems, collective motions and reduced integrability.
We also consider reductions of the Hamiltonian flows
restricted to their invariant submanifolds generalizing classical Hess--Appel'rot case
of a heavy rigid body motion.
\end{abstract}

\maketitle

\tableofcontents

\section{Introduction}

\subsection{Hamiltonian Systems}
Let $(M,\omega)$ be $2n$-dimensional connected symplectic manifold.
By $X_h$ we shall denote the Hamiltonian vector field of a function $h$
and by $\{\cdot,\cdot\}$ the canonical Poisson bracket on $M$ defined by
$$
dh_x(\xi)=\omega_x(\xi,X_h(x)), \quad \xi\in T_xM \quad \text{and}\quad
\{f_1,f_2\}=\omega(X_{f_2},X_{f_1})=df_1(X_{f_2}),
$$
respectively (we follow the sign convention of Arnold \cite{Ar}).
The equations:
\begin{equation} \label{hamiltonian}
\dot x=X_h(x)\quad \Longleftrightarrow \quad \dot f=\{f,h\}, \quad f\in C^\infty (M)
\end{equation}
are called Hamiltonian equations with the Hamiltonian function $h$.

A function $f$ is an integral of the Hamiltonian system
(constant along trajectories of (\ref{hamiltonian}))
if and only if it commutes with $h$: $\{h,f\}=0$.

\subsection{ Natural Mechanical Systems}
The basic examples of Hamiltonian systems are natural mechanical systems $(Q,\kappa,v)$,
where $Q$ is a configuration space, $\kappa$ is a Riemannian metric on $Q$
(also regarded as a mapping $\kappa: TQ\to T^*Q$) and $v:Q\to\R$ is a potential function.

Let $q=(q^1,\dots,q^n)$ be local coordinates on $Q$.
The motion of the system is described by the Euler--Lagrange equations
\begin{equation} \label{Lagrange}
\frac{d}{dt}\frac{\partial l}{\partial \dot q^i}=\frac{\partial l}{\partial q^i},
\quad i=1,\dots,n,
\end{equation}
where the Lagrangian is
$l(q,\dot q)=\frac12(\kappa_q \dot q,\dot q)-v(q)
=\frac12\sum_{ij}\kappa_{ij}\dot q^i\dot q^j-v(q)$.

Equivalently, we can pass from velocities $\dot q^i$ to the momenta $p_j$
by using the standard Legendre transformation $p_j=\kappa_{ij}\dot q^i$.
Then in the coordinates $q^i, p_i$ of the cotangent bundle $T^*Q$ the equations of motion read:
\begin{equation} \label{1}
\frac{dq^i}{dt}=\frac{\partial h}{\partial p_i},\qquad
\frac{dp_i}{dt}=-\frac{\partial h}{\partial q^i}, \qquad i=1,\dots,n,
\end{equation}
where $h$ is the Legendre transformation of $l$
\begin{equation} \label{ham}
 h(q,p)=\frac{1}{2}\sum_{i,j=1}^n\kappa^{ij}p_i p_j+v(q),
\end{equation}
interpreted as the energy of the system.
Here $\kappa^{ij}$ are the coefficients of the tensor inverse to the metric.

This system of equations is Hamiltonian on $T^*Q$
endowed with the \emph{canonical symplectic form} $\omega=\sum_{i=1}^n dp_i\wedge dq^i$.
The corresponding canonical Poisson bracket is given by
\begin{equation}
\{f,g\}=\sum_{i=1}^{n}\left(
 \frac{\partial f}{\partial q^i}\frac{\partial g}{\partial p_i}
-\frac{\partial g}{\partial q^i}\frac{\partial f}{\partial p_i}\right).
\label{CPB}\end{equation}

For $v\equiv 0$, the equations (\ref{1}) are \emph{the geodesic flow equations}
of the Riemannian manifold $(Q,\kappa)$.

\subsection{ Quadratures}
Recall that integration by quadratures of a system of differential equations $\dot x=X(x)$
in some domain $V\subset\mathbb{R}^n\{x\}$ is the search for its solutions
by a finite number of ``algebraic" operation
(including inversion of the functions) and ``quadratures",
i.e., calculations of the integrals of known functions.

The Lie theorem says that if we have $n$ linearly independent vector fields $X_1,\dots,X_n$
that generate a solvable Lie algebra under commutation and $[X_1,X_i]=\lambda_i X_i$,
then the differential equation $\dot x=X_1(x)$ can be integrated by quadratures in $V$
(e.g., see \cite{AKN}).

By modifying the Lie theorem, Kozlov obtained the following result
which gives the sufficient conditions to the integration by quadratures
of Hamiltonian systems with invariant relations (see \cite{KK, AKN, Kozlov}).

Consider $\R^{2n}=\{(q^1,\dots,q^n,p_1,\dots,p_n)\}$
endowed with the canonical Poisson bracket (\ref{CPB}).
Suppose that we have $n$-functions $f_1,\dots,f_n$
that generate solvable Lie algebra with respect to Poisson bracket
$$
\{f_1,f_j\}=c^1_{1,j}f_1, \;
\{f_2,f_j\}=c^1_{2,j}f_1+c^2_{2,j} f_2, \;\dots,\;
\{f_n,f_j\}=c^1_{n,j}f_1+\cdots+c^n_{n,j} f_n
$$
and which are independent on the level-set
\[
M_c=\{(q,p)\in \R^{2n}\mid f_1=c_1,\dots,f_n=c_n\}.
\]

\begin{thm} [Kozlov \cite{Kozlov}]
Suppose
$$
c^1_{1,j}c_1=0, \;
c^1_{2,j}c_1+c^2_{2,j} c_2=0, \,\, \dots, \,\,
c^1_{n,j}c_1+\dots+c^n_{n,j} c_n=0
$$
for all $j$. Then $M_c$ is an invariant manifolds for Hamiltonian systems
with Hamiltonians $f_1,\dots,f_n$.
The solutions of the Hamiltonian systems that lie on $M_c$ can be found by quadratures.
\end{thm}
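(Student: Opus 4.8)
The plan is to establish the invariance of $M_c$ by a direct computation, then transfer the problem to $M_c$ itself, where the restricted Hamiltonian vector fields form a solvable Lie algebra of the type handled by the Lie theorem.

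First I would check invariance. Fix $j$ and follow the Hamiltonian flow of $f_j$. Along it each $f_i$ evolves by $\dot f_i=\{f_i,f_j\}=\sum_{k=1}^i c^k_{i,j}f_k$. Restricting to $M_c$, where $f_k=c_k$, the right-hand side becomes $\sum_{k=1}^i c^k_{i,j}c_k$, which vanishes for every $i$ by hypothesis. Hence $df_i(X_{f_j})=\{f_i,f_j\}=0$ on $M_c$ for all $i$, so $X_{f_j}$ lies in $\bigcap_i\ker df_i$; since the $f_i$ are independent on $M_c$, this intersection is exactly $T_xM_c$, and tangency of each $X_{f_j}$ is precisely the statement that $M_c$ is invariant under all $n$ flows.

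Next I would set up the reduced picture on $M_c$. Independence of the $f_i$ gives $\dim M_c=n$ with $df_1,\dots,df_n$ pointwise independent; nondegeneracy of $\omega$ then makes $X_{f_1},\dots,X_{f_n}$ pointwise linearly independent, and by the previous step tangent to $M_c$, so they form a frame on $M_c$. The correspondence $f\mapsto X_f$ is a Lie-algebra homomorphism up to sign, $[X_{f_i},X_{f_j}]=\pm X_{\{f_i,f_j\}}=\pm\sum_{k=1}^i c^k_{i,j}X_{f_k}$; this identity holds on all of $\R^{2n}$, and because the fields are tangent to $M_c$ it restricts to the same constant-coefficient relations for the restricted fields. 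The triangular form $\{f_i,f_j\}\in\Span(f_1,\dots,f_i)$ shows that $\Span(X_{f_1},\dots,X_{f_k})$ is an ideal for each $k$, so the frame generates an $n$-dimensional solvable Lie algebra carrying a complete flag of ideals and acting on $M_c$ with everywhere independent generators.

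Finally I would invoke the Lie theorem, in the form for a solvable algebra equipped with a complete flag of ideals (the diagonal relation $[X_1,X_i]=\lambda_iX_i$ quoted above being the model special case): on an $n$-manifold carrying $n$ independent fields spanning such an algebra, the flow of any generator integrates by quadratures. Applied to $X_{f_1},\dots,X_{f_n}$ restricted to $M_c$, this produces the solutions of all $n$ systems lying on $M_c$ by quadratures. The main obstacle I anticipate is bookkeeping rather than conceptual: one must verify that the structure constants genuinely survive restriction as constants (they do, since the bracket identity is global and the fields are tangent) and relax the quoted diagonal form to the general solvable flag. The explicit quadratures then arise by integrating the structure equations of the dual coframe $\theta^1,\dots,\theta^n$ one rung at a time up the flag of ideals.
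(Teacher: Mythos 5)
The paper itself contains no proof of this statement: it is quoted from Kozlov's paper \cite{Kozlov} in the survey's introductory section, with only the remark that it is obtained ``by modifying the Lie theorem''. So your proposal has to be judged on its own merits, and on those it is essentially correct, and it follows exactly the route the surrounding text indicates. Your invariance step is sound: $df_i(X_{f_j})=\{f_i,f_j\}=\sum_{k\leq i}c^k_{i,j}f_k$ vanishes on $M_c$ by hypothesis, so each $X_{f_j}$ is tangent to the regular level set $M_c$, which is therefore invariant. (A slightly cleaner variant avoids the tangency-implies-invariance step altogether: along the flow of $X_{f_j}$ the quantities $u_i=f_i-c_i$ satisfy the linear homogeneous system $\dot u_i=\sum_{k\leq i}c^k_{i,j}u_k$, precisely because the hypothesis kills the constant term, so $u(0)=0$ forces $u\equiv 0$.) The frame argument and the persistence of the constant structure relations under restriction to $M_c$ are also correct, and you are right that the triangular form makes each $\Span(X_{f_1},\dots,X_{f_k})$ an ideal. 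One point deserves emphasis: this complete flag of ideals is part of the \emph{hypothesis}, and that matters, because over $\R$ an abstractly solvable Lie algebra need not admit any complete flag of ideals (e.g., the Euclidean algebra $e(2)$), so your phrase ``relax to the general solvable flag'' must be understood at the flag level, not as an appeal to abstract solvability.

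The one place where real work remains is your final step. The Lie theorem as quoted in the paper assumes the diagonal relations $[X_1,X_i]=\lambda_i X_i$ and concludes integrability by quadratures only of $\dot x=X_1(x)$, whereas the statement requires integrating \emph{all} $n$ flows under the weaker flag hypothesis. That strengthening is not bookkeeping; it is precisely the ``modification'' attributed to Kozlov and is the heart of the theorem. Your coframe sketch does carry it through, and it is worth recording why: since $[X_{f_i},X_{f_j}]\in\Span(X_{f_1},\dots,X_{f_{\min(i,j)}})$, the dual coframe satisfies $d\theta^k\in\Span\{\theta^i\wedge\theta^j \mid i,j\geq k\}$ with constant coefficients; hence $\theta^n$ is closed and gives a function $g_n$ by quadrature, and descending the flag one finds at each rung an integrating factor depending only on the functions already constructed, until one reaches coordinates in which every $X_{f_j}$ generates a triangular system solvable by quadratures (in the two-dimensional model $[X_1,X_2]=cX_1$ the factor is $e^{-cg_2}$, and both flows, not just that of $X_1$, are then explicit). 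If you intend the proof to be self-contained, this induction should be written out, or the flag (``solvable structure'') form of Lie's theorem cited, rather than the diagonal version quoted in the paper.
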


In a special case one gets \emph{Liouville's theorem}:
if a Hamiltonian system with $n$ degrees of freedom has $n$ independent first integrals
in involution than (at least locally) it can be integrated by quadratures.

\subsection{ Liouville Integrability}
One of the central problems in Hamiltonian dynamics is
whether the equations (\ref{hamiltonian}) are completely integrable or not.
The usual definition of complete integrability is as follows:

\begin{dfn}
Hamiltonian equations (\ref{hamiltonian}) are called \emph{completely integrable}
if there are $n$ Poisson-commuting smooth integrals $f_1,\dots,f_n$
whose differentials are independent in an open dense subset of $M$.
The set of integrals $\mathcal F=\{f_1,\dots,f_n\}$
is called a \emph{complete involutive (or commutative) set} of functions on $M$.
\end{dfn}

\begin{rem}
The last condition needs to be commented.
The functional independence of the integrals can be meant in three different senses.
The differentials of $f_1,\dots,f_n$ can be linearly independent:

(i) on an open everywhere dense subset,

(ii) on an open everywhere dense subset of full measure,

(iii) everywhere except for a piece-wise smooth polyhedron.

Instead of these conditions one can assume all the functions to be real analytic.
Then it suffices to require their functional independence at least at one point
(we will call such a situation \emph{analytic integrability}).
\end{rem}

If the system is completely integrable,
by Liouville's theorem it can be integrated by quadratures.
Moreover, the global regularity of dynamics in the case of complete integrability
follows from the following classical Liouville--Arnold theorem:

\begin{thm} [Liouville--Arnold \cite{Liouville, Ar}]
Suppose that the equations (\ref{hamiltonian}) have $n$ Poisson-commuting
smooth integrals $f_1,\dots,f_n$ and let $M_c=\{f_1=c_1,\dots, f_n=c_n\}$
be a common invariant level set.

\emph{(i)} If $M_c$ is regular (the differentials of $f_1,\dots,f_n$ are independent on it),
compact and connected, then it is diffeomorphic to the $n$-dimensional Lagrangian torus.

\emph{(ii)} In a neighborhood of $M_c$ there are action-angle variables $I,\varphi\mod2\pi$
such that the symplectic form becomes
$
\omega=\sum_{i=1}^n dI_i\wedge d\varphi_i
$
and the Hamiltonian function $h$ depends only on actions $I_1,\dots,I_n$.
Thus the Hamiltonian equations are linearized
\[
\dot \varphi_1=\omega_1(I)=\frac{\partial h}{\partial I_1}, \dots,
\dot \varphi_r=\omega_r(I)=\frac{\partial h}{\partial I_n},
\]
i.e., dynamics on invariant tori $I_i=const$ is quasi-periodic.
\end{thm}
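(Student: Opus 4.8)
The plan is to establish (i) by realizing $M_c$ as a homogeneous space for a free $\R^n$-action generated by the Hamiltonian flows, and (ii) by constructing the action variables as period integrals of a primitive of $\omega$ and producing conjugate angles from a generating function. For (i), I would first note that each $X_{f_j}$ is tangent to $M_c$: since $\{f_i,f_j\}=df_i(X_{f_j})=0$, the field $X_{f_j}$ annihilates every $df_i$ and so is tangent to the common level set. The involution relations $\{f_i,f_j\}=0$ translate into $[X_{f_i},X_{f_j}]=0$, so the flows commute, while regularity of $M_c$ makes $df_1,\dots,df_n$—hence, by nondegeneracy of $\omega$, the fields $X_{f_1},\dots,X_{f_n}$—pointwise linearly independent. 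I thus obtain $n$ commuting, everywhere independent vector fields on the $n$-manifold $M_c$; compactness makes their flows complete, and the time-$t$ maps assemble into a smooth action $\Phi\colon\R^n\times M_c\to M_c$, $\Phi(t,\cdot)=\Phi^1_{t_1}\circ\cdots\circ\Phi^n_{t_n}$.

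Next I would analyze this action. Independence of the fields makes it locally free, so each orbit is an open $n$-dimensional submanifold; connectedness of $M_c$ then forces a single orbit, i.e.\ the action is transitive, and the stabilizer $\Lambda$ of any point is a discrete subgroup of $\R^n$, hence a lattice. The resulting diffeomorphism $M_c\cong\R^n/\Lambda$ combined with compactness of $M_c$ forces $\Lambda$ to have full rank $n$, whence $M_c\cong\R^n/\mathbb{Z}^n=\T^n$, an $n$-torus carrying the linear coordinates induced by the flow parameters.

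For (ii), in a neighborhood $U$ of $M_c$ the map $F=(f_1,\dots,f_n)$ is a submersion, so $U$ is foliated by Lagrangian tori $M_{c'}$ (Lagrangian because $\omega(X_{f_i},X_{f_j})=\{f_j,f_i\}=0$). Choosing a smooth basis $\gamma_1(c'),\dots,\gamma_n(c')$ of $H_1(M_{c'};\mathbb{Z})$ and a primitive $\lambda$ of $\omega$ (locally $\lambda=\sum_i p_i\,dq^i$), I define the action variables
\[
I_j=\frac{1}{2\pi}\oint_{\gamma_j(c')}\lambda,\qquad j=1,\dots,n.
\]
Since each fibre is Lagrangian, $\lambda$ restricts to a closed form there and, by Stokes' theorem, the periods depend only on $c'$; hence the $I_j$ are functions of $f_1,\dots,f_n$ alone, they Poisson-commute, and their differentials are independent, so $(I_1,\dots,I_n)$ is again a complete involutive set defining the same foliation. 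The angles arise from the generating function
\[
S(q,I)=\int_{q_0}^{q}\sum_{i} p_i\,dq^i ,
\]
the integral taken within a fibre (well-defined modulo the periods because $\lambda$ is closed there); setting $\varphi_j=\partial S/\partial I_j$ and $p_i=\partial S/\partial q^i$ gives $\omega=\sum_j dI_j\wedge d\varphi_j$, with each $\varphi_j$ an angle mod $2\pi$ precisely because $I_j$ was normalized by its period. As $h$ is constant on each torus $I=\text{const}$, it depends only on the actions, and Hamilton's equations become $\dot I_j=0$, $\dot\varphi_j=\partial h/\partial I_j$.

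The main obstacle is the second part: verifying that the period integrals are genuinely in involution and that $S$ yields canonical coordinates. The cleanest route is to deduce $\{I_i,I_j\}=0$ and the local constancy of the periods directly from the Lagrangian property of the fibres, and then to confirm by differentiating $S$ that the transformation $(q,p)\mapsto(I,\varphi)$ is symplectic. The delicate bookkeeping lies in arranging that the basis cycles $\gamma_j(c')$ vary smoothly and that the multivaluedness of $S$ matches exactly the $2\pi$-periodicity of the $\varphi_j$, so that $(I,\varphi)$ are honest action-angle coordinates in the whole neighborhood.
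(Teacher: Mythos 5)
The paper does not prove this theorem: it is quoted as a classical result (citing Liouville and Arnold), and the paper's own proof sketch for its noncommutative generalization (Theorem 2.1) explicitly defers the torus and quasi-periodicity claims to ``the usual Liouville theorem''. Your proposal is precisely that classical Arnold argument---commuting complete Hamiltonian vector fields giving a transitive $\R^n$-action whose stabilizer is a full-rank lattice for (i), period integrals of a primitive of $\omega$ plus a generating function for (ii)---and it is correct in outline; the only steps you assert rather than prove (existence of a primitive $\lambda$ on an entire neighborhood of $M_c$, which holds because $M_c$ is Lagrangian so the class of $\omega$ vanishes on the retracting tubular neighborhood, and the independence of the differentials $dI_j$, which requires the period-lattice computation) are the standard gaps every textbook treatment must fill, and you flag them yourself.
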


The topology of toric foliation in many low-dimensional integrable systems is well known
(e.g., see \cite{TF, Os, BF, Au} and references therein).
The obstructions to the existence of global action-angle variables are studied in \cite{Du}.

Even the system is integrable, the dynamics on the singular set
(where the differentials of integrals $f_1,\dots,f_n$ are dependent)
can be quite complicated.
For example, the flows given in \cite{Bu, BoTa, Bu2} are integrable by smooth integrals,
and by Taimanov's theorem \cite{Ta} can not be integrable by analytic ones.

\subsection{ Outline of the paper}
This paper is a survey in integrable dynamical systems related to Hamiltonian $G$-actions
including integrability of collective motions, reduced integrability and partial integrability.
It turns out that most natural framework to use here
is non-commutative integrability developed by Mischenko and Fomenko \cite{MF2} and Nekhoroshev \cite{N}.

In the case of non-commutative integrability
(sometimes known as superintegrability or degenerate integrability)
invariant manifolds are isotropic but non-Lagrangian.
In section 2 we show that non-commutative integrability
always implies usual Liouville integrability by means of smooth commuting function
(see \cite{BJ2}).

In section 3 we recall on the basic notion of Hamiltonian $G$-actions,
Marsden--Weinstein reduction of symmetries
and Lagrange--Routh reduction of natural mechanical systems.

Following \cite{BJ2}, the construction of integrable systems
related to Hamiltonian $G$-actions is presented in section 4.
We also show that integrability of the reduced systems
is essentially equivalent to the integrability of the original systems in section 5
(see \cite{Zu, Jo1}).

In section 6 we study reductions of the Hamiltonian flows
restricted to their invariant submanifolds (see \cite{Jo2}).
If the reduced system is integrable, the original system may not be.
The typical situation is that invariant submanifold is foliated on invariant tori,
but not with quasi-periodic motions.
We refer to such systems as partially integrable.
The classical example is the Hess--Appel'rot case of a heavy rigid body motion
about a fixed point \cite{He, App}.

Many interesting problems related to the torus actions,
algebraic complete integrability as well as separation of variables are not discuss here.

\section{Noncommutative Integrability}

\subsection{Noncommutative Integration}
If $f_1$ and $f_2$ are integrals of (\ref{hamiltonian}),
then so are an arbitrary smooth function $F(f_1,f_2)$ and the Poisson bracket $\{f_1,f_2\}$.
Therefore without loss of generality we can assume that integrals of (\ref{hamiltonian})
form an algebra $\mathcal F$ with respect to the Poisson bracket.
For simplicity we shall assume that $\mathcal F$
is functionally generated by functions $f_1,\dots,f_l$ so that
\[
\{f_i,f_j\}=a_{ij}(f_1,\dots,f_l).
\]

Suppose that
\begin{gather*}
\dim F_x=\dim \Span\{df_i(x)\, \vert\, i=1,\dots,l\}=l, \quad x\in U, \\
\dim \ker\{\cdot,\cdot\}|_{F_x}=r, \quad x\in U,
\end{gather*}
holds for an open dense set $U\subset M$.
Let $\phi: M\to \mathbb{R}^l$ be the moment mapping:
$\phi(x)=(f_1(x),\dots,f_l(x))$ and let $\Sigma=\phi(M\setminus U)$.
Then we have the following noncommutative integration theorem
(see Mish\-chenko and Fomenko \cite{MF2}, Nekhoroshev \cite{N} and Brailov \cite{Br1}):

\begin{thm}Suppose that:
$\dim F_x+\dim \ker\{\cdot,\cdot\}|_{F_x}=\dim M$, for $x\in U$.
Let $c\in\phi(M)\setminus \Sigma$ be a regular value of the moment map. Then:

\emph{(i)} $M_c=\phi^{-1}(c)$ is an isotropic submanifold of $M$
and the equations \eqref{hamiltonian} on $M_c$ can be (locally) solved by quadratures;

\emph{(ii)} Compact connected components $T^r_c$ of $M_c$
are diffeomorphic to $r$-dimen\-sional tori.
The dynamics on $T^r_c$ is quasi-periodic,
i.e., can be linearized in appropriate angle coordinates $\varphi_1,\dots,\varphi_r$:
$$
\varphi_1(t)=\omega_1 t,\ldots,\varphi_r(t)=\omega_r t.
$$
\end{thm}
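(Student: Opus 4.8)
The plan is to reduce the statement to the ordinary Liouville--Arnold theorem by extracting from the non-commuting family $f_1,\dots,f_l$ a genuinely commuting set of $r$ integrals whose Hamiltonian fields span the tangent spaces of $M_c$. I begin with the linear algebra along $M_c$. Write $\Pi_x=\Span\{X_{f_1}(x),\dots,X_{f_l}(x)\}$ for the image of $F_x$ under the isomorphism $\alpha\mapsto X_\alpha$ fixed by $\omega$, so $\dim\Pi_x=\dim F_x=l$ on $U$. Since $T_xM_c=\ker d\phi_x=\{v\in T_xM:\omega_x(v,X_{f_i}(x))=0,\ i=1,\dots,l\}=\Pi_x^\omega$, regularity of $c$ makes $M_c$ a submanifold with $\dim T_xM_c=2n-l$. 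The skew form $\{\cdot,\cdot\}|_{F_x}$ corresponds under $\alpha\mapsto X_\alpha$ to $\omega|_{\Pi_x}$, so its $r$-dimensional kernel is the radical $\Pi_x\cap\Pi_x^\omega$. The hypothesis $\dim F_x+\dim\ker\{\cdot,\cdot\}|_{F_x}=\dim M$, i.e. $l+r=2n$, then gives $\dim\Pi_x^\omega=2n-l=r=\dim(\Pi_x\cap\Pi_x^\omega)$ and forces $\Pi_x^\omega\subseteq\Pi_x$. Hence for $v,w\in T_xM_c=\Pi_x^\omega\subseteq\Pi_x$ we get $\omega_x(v,w)=0$: the manifold $M_c$ is isotropic.

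The crux is to produce commuting integrals. The relations $\{f_i,f_j\}=a_{ij}(f_1,\dots,f_l)$ say exactly that $\phi$ is a Poisson map onto $\phi(U)$ equipped with the bivector $a_{ij}$ on $\R^l$, whose Jacobi identity is inherited from that of $\{\cdot,\cdot\}$ evaluated on $f_i,f_j,f_k$. On $\phi(U)$ this tensor has constant rank $l-r$, so by the Poisson splitting theorem there are, near $c$, exactly $r$ functionally independent Casimirs $K_1,\dots,K_r$, characterised by $\sum_j\partial_jK_\alpha\,a_{ji}\equiv0$. Their pullbacks $G_\alpha:=K_\alpha\circ\phi$ satisfy $\{G_\alpha,f_i\}=\sum_j\partial_jK_\alpha(f)\,a_{ji}(f)\equiv0$ identically near $M_c$, hence $\{G_\alpha,G_\beta\}\equiv0$. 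Moreover $dG_\alpha=\sum_j\partial_jK_\alpha(f)\,df_j$ are independent on $M_c$, and each $X_{G_\alpha}=\sum_j\partial_jK_\alpha(f)X_{f_j}\in\Pi_x$ is tangent to $M_c$ since $X_{G_\alpha}(f_i)=\{f_i,G_\alpha\}=0$. Thus $X_{G_1},\dots,X_{G_r}$ are $r$ independent, pairwise commuting Hamiltonian fields spanning $T_xM_c=\Pi_x^\omega$ at every point of $M_c$.

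This is precisely the Liouville--Arnold situation transported to the isotropic leaf. The Hamiltonian $h$ of \eqref{hamiltonian} admits the $f_i$ as integrals, so $\{h,f_i\}=0$; then $X_h(f_i)=\{f_i,h\}=0$ shows $X_h$ is tangent to $M_c$, while $\{h,G_\alpha\}=\sum_j\partial_jK_\alpha(f)\{h,f_j\}=0$ gives $[X_h,X_{G_\alpha}]=0$. On each leaf the complete commuting fields $X_{G_\alpha}$ furnish flat coordinates, so any tangent field commuting with all of them has constant coefficients, i.e. $X_h|_{M_c}=\sum_\alpha\omega_\alpha X_{G_\alpha}$ with $\omega_\alpha$ constant; straightening the $X_{G_\alpha}$ integrates the flow by quadratures, which proves (i). For (ii), on a compact connected component $T^r_c$ the $X_{G_\alpha}$ are complete and generate a locally free transitive $\R^r$-action, whence $T^r_c\cong\R^r/\Lambda$ is an $r$-torus on which they are translation-invariant; the flow of $X_h=\sum_\alpha\omega_\alpha X_{G_\alpha}$ is then the linear motion $\varphi_\alpha(t)=\omega_\alpha t$.

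The main obstacle is exactly the middle step. The fields spanning $TM_c$ come from the non-commuting $f_i$, and those linear combinations tangent to $M_c$ commute only modulo functions that vanish \emph{on} $M_c$ but whose differentials generally do not, so they need not generate an abelian---hence toric---action. Recognising the matrix $a_{ij}(f)$ as a Poisson structure on $\R^l$ and passing to its Casimirs turns this approximate commutativity on $M_c$ into exact, global involutivity; the technical points to secure are the constant rank of $a$ (guaranteed on $U$), the existence of the $r$ Casimirs, and the persistence of their independence along $M_c$.
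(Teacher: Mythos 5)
Your proof is correct, and it shares the skeleton of the paper's argument: produce $r$ independent, pairwise commuting vector fields tangent to $M_c$, recognize $X_h|_{M_c}$ as a constant-coefficient combination of them, then integrate by quadratures and run the standard Liouville torus argument on compact components. Where you genuinely differ is in the key step. The paper merely asserts that the commuting fields ``can be obtained as linear combinations of $X_{f_1},\dots,X_{f_l}$ from the conditions $\omega(X_i,X_{f_j})=0$'', i.e., it postulates commutativity and delegates the proof to \cite{MF2, N, Br1}; as you correctly observe, this is exactly the delicate point, since combinations that are only tangent to $M_c$ commute merely modulo functions vanishing on $M_c$. You close this gap by pushing the relations $\{f_i,f_j\}=a_{ij}(f)$ down to a constant-rank Poisson structure on the open set $\phi(U)\subset\R^l$, taking its $r$ local Casimirs $K_\alpha$ near $c$, and pulling them back: the fields $X_{K_\alpha\circ\phi}$ commute \emph{identically} in a neighborhood of $M_c$, not just along it, are tangent to $M_c$, and span its tangent spaces. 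Notably, this is precisely the device the paper itself introduces---the bracket $\{\cdot,\cdot\}_{\mathcal F}$ on $\phi(M)$ together with the Weinstein local-structure theorem \cite{Wa}---but only in the proof of Theorem 2.2; deploying it one theorem earlier buys you a self-contained, rigorous proof of the step the paper leaves as an assertion, at the cost of invoking the local splitting theorem for Poisson manifolds. One small point of rigor: your phrase ``straightening the $X_{G_\alpha}$ integrates the flow by quadratures'' is exactly the place where the Lie theorem quoted in Section 1.3 should be invoked (a commutative, hence solvable, algebra of $r$ independent fields on the $r$-dimensional manifold $M_c$, with $X_h$ a member after a constant change of basis), since straightening a frame is in general an ODE-solving operation rather than a quadrature; with that citation your argument matches the paper's conclusion exactly.
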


Let us just point out some steps in the proof of the theorem.
Under the hypotheses of Theorem 2.1,
there exist $r$ linearly independent commuting vector fields $X_1=X_h,X_2,\dots,X_r$ on $M_c$.
They can be obtained as linear combinations
of the Hamiltonian vector fields $X_{f_1},\dots,X_{f_l}$
from the conditions $\omega(X_i,X_{f_j})=0$, $i=1,\dots,r$, $j=1,\dots,l$.
Since commutative algebras are solvable we can apply the Lie theorem
to locally integrate by quadratures the system $\dot x=X_h(x)$.
On a compact connected component $T^r_c$ of $M_c$,
the vector fields $X_1,\dots,X_r$ are complete.
Therefore $T^r_c$ is diffeomorphic to an $r$-dimensional torus
and the motion on the torus is quasi-periodic
(the proof of this fact is just the same as in the usual Liouville theorem).

Recently, the obstructions for noncommutative integration of Hamiltonian systems
within the framework of differential Galois theory have been studied in \cite{MacPr, MPY}.

It is important to note that the assumptions of Theorem 2.1
imply integrability of the considered system in the usual, commutative sense.

\begin{thm} \cite{BJ2}
Under the assumptions of Theorem $2.1$, the Hamiltonian system \eqref{hamiltonian}
is Liouville integrable, i.e., it admits $n$ Poisson-commuting $C^\infty$-smooth integrals
$g_1,\dots,g_n$, independent on an open dense subset of $M$.
\end{thm}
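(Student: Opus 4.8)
The plan is to transport the whole problem to the base of the moment map. Every smooth function of the integrals, $g=G\circ\phi$ with $G\colon\R^l\to\R$, is again an integral of \eqref{hamiltonian}; writing $h=H\circ\phi$ (possible since $\mathcal F$ is functionally generated by $f_1,\dots,f_l$), it will be convenient to insist that $G_1=H$, so that every function produced below automatically commutes with $h$. From $d(G\circ\phi)=\sum_i\big(\tfrac{\partial G}{\partial f_i}\circ\phi\big)\,df_i$ and nondegeneracy of $\omega$ one gets $X_{G\circ\phi}=\sum_i\big(\tfrac{\partial G}{\partial f_i}\circ\phi\big)\,X_{f_i}$, and hence, for $g_\alpha=G_\alpha\circ\phi$,
\begin{equation*}
\{g_1,g_2\}
=\Big(\sum_{i,j}a_{ij}\,\frac{\partial G_1}{\partial f_i}\,\frac{\partial G_2}{\partial f_j}\Big)\circ\phi
=:\{G_1,G_2\}_a\circ\phi .
\end{equation*}
The bracket $\{\cdot,\cdot\}_a$, with structure functions $a_{ij}$, is a genuine Poisson bracket on $\phi(U)\subset\R^l$ (the Jacobi identity descends from $M$ because $\phi|_U$ is a submersion), and the identity shows that the pullbacks $g_\alpha$ Poisson-commute on $M$ precisely when the $G_\alpha$ are in involution for $\{\cdot,\cdot\}_a$. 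Since a submersion is an open map, if $dG_1,\dots,dG_n$ are independent on an open dense subset of $\phi(U)$ then $dg_1,\dots,dg_n$ are independent on an open dense subset of $M$. Thus the theorem reduces to producing a complete involutive set of $n$ smooth functions on the Poisson manifold $(\R^l,\{\cdot,\cdot\}_a)$, one of them being $H$.

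Next I would check that $n$ is the right number. The matrix of $\{\cdot,\cdot\}|_{F_x}$ in the basis $df_1,\dots,df_l$ is exactly $(a_{ij})$, so the rank of $\{\cdot,\cdot\}_a$ at a generic point equals $\dim F_x-\dim\ker\{\cdot,\cdot\}|_{F_x}=l-r$ and its corank equals $r$; by the hypothesis $l+r=2n$ the half-rank is $n-r$. A complete involutive set on the $l$-dimensional manifold $(\R^l,\{\cdot,\cdot\}_a)$ therefore consists of $r+(n-r)=n$ functions, exactly the number demanded by Liouville integrability. Concretely, on the locus of locally constant rank such a set is obtained from $r$ independent Casimirs $c_1,\dots,c_r$ (their pullbacks commute with all of $\mathcal F$) together with $n-r$ functions in involution along the $2(n-r)$-dimensional symplectic leaves; equivalently, in the generalized action--angle coordinates of Nekhoroshev near a regular torus $T^r_c$ from Theorem 2.1 the actions $I_1,\dots,I_r$ are the Casimirs and $I_1,\dots,I_r,p_1,\dots,p_{n-r}$ is a local complete involutive set. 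The same set can be grown inductively: given $G_1=H,G_2,\dots,G_{m-1}$ in involution, one chooses $G_m$ to be a common first integral of the commuting Hamiltonian vector fields of $G_1,\dots,G_{m-1}$ for $\{\cdot,\cdot\}_a$, with $dG_m$ independent from the previous differentials; the rank count guarantees that this can be continued until $m=n$.

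The main obstacle is global: passing from these local involutive sets to $n$ functions that are at once $C^\infty$ on an open dense subset of all of $\R^l$ and still in involution. Partitions of unity are of no use, since a convex combination of two commuting families need not commute, and honest Casimirs need not exist globally because the symplectic leaves of $\{\cdot,\cdot\}_a$ may be non-closed. This is exactly why the statement asks only for smooth (not analytic) integrals, independent merely on an open dense set, and it is the point at which the argument of \cite{BJ2} does the real work. I would handle it by first restricting to the open dense set of regular values of $\phi$ on which $\{\cdot,\cdot\}_a$ has locally constant rank, and then realizing the inductive scheme above globally: at each step $G_m$ is a solution of the homogeneous linear first-order system $\{G_1,G_m\}_a=\dots=\{G_{m-1},G_m\}_a=0$, whose $C^\infty$ solution sheaf is non-trivial and whose generic solution keeps $dG_1,\dots,dG_m$ independent off a nowhere dense set. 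Controlling the accumulated exceptional set so that it stays nowhere dense, and pulling $G_1,\dots,G_n$ back by $\phi$, produces the required $n$ Poisson-commuting smooth integrals $g_1,\dots,g_n$ of \eqref{hamiltonian}, independent on an open dense subset of $M$.
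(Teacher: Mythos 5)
Your first two steps are exactly the paper's: you introduce the induced bracket $\{\cdot,\cdot\}_a$ on the image of the moment map (the paper's $\{\cdot,\cdot\}_{\mathcal F}$ with structure functions $a_{ij}$), observe that involutive families downstairs pull back to Poisson-commuting integrals upstairs, compute that the base has corank $r$ and half-rank $n-r$, so that a complete involutive family there consists of $n$ functions, and note that locally (Weinstein splitting in the paper, Nekhoroshev coordinates in your version) such a family exists. The divergence, and the gap, is at the globalization step, which you correctly flag as ``where the real work is done'' but then do not actually do. Your proposed mechanism is to solve, globally on the regular part of the base, the system $\{G_1,G_m\}_a=\dots=\{G_{m-1},G_m\}_a=0$ and to claim that its ``$C^\infty$ solution sheaf is non-trivial'' with generic solutions keeping the differentials independent off a nowhere dense set. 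This is unjustified: global solutions of that system are precisely the functions constant on the orbits of the (singular) foliation generated by the Hamiltonian vector fields of $G_1,\dots,G_{m-1}$, and when those orbits -- or the symplectic leaves, in the Casimir-producing part of your scheme -- are non-proper, a continuous global solution is constant on orbit closures and may have vanishing differential everywhere. You yourself concede this phenomenon (``honest Casimirs need not exist globally because the symplectic leaves may be non-closed''), but your fix suffers from exactly the same disease: nothing in the argument shows that even one nonconstant global solution exists at a given inductive step, let alone a ``generic'' supply of them, and ``controlling the accumulated exceptional set'' never gets off the ground. (A smaller point: forcing $G_1=H$ is unnecessary -- every pullback $G\circ\phi$ is automatically an integral, being a function of the integrals $f_i$ -- and the global factorization $h=H\circ\phi$ is itself stronger than what completeness gives you, which is only $dh(x)\in F_x$, i.e., a local factorization.)

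The paper's resolution is different in kind and is the idea your proposal is missing: it never solves any equation globally. One fills a dense open subset of $\phi(M)\setminus\Sigma$ with countably many pairwise disjoint balls $B^\alpha$; on each ball the local canonical coordinates $G_1,\dots,G_l$ give the involutive family $h_1=G_1^2+G_{1+q}^2,\dots,h_n=G_l^2$; then (Brailov's trick) one multiplies by the bump factor $h=g(h_1+\dots+h_n)$ vanishing outside the ball, obtaining $F_i^\alpha=h\cdot h_i$. These are globally defined, compactly supported in $B^\alpha$, and still pairwise in involution because the bump factor is itself a function of the $h_i$'s; functions supported in different balls commute for the trivial reason that their supports are disjoint. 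Gluing by zero therefore yields $n$ smooth functions in involution on the whole base, independent on the union of the independence sets inside the balls -- an open dense set -- at the price of total degeneracy on the nowhere dense complement. This ``sacrifice independence on a nowhere dense set, gain commutation for free'' mechanism is exactly what makes the smooth (as opposed to analytic) statement provable, and it is why the theorem is stated for $C^\infty$ integrals independent only on an open dense subset; without it, or some substitute for it, your local-to-global step does not go through.
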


\begin{proof}
On the image of $M$ under the moment mapping $\phi(M)\subset\mathbb{R}^l\{y_1,\dots,y_l\}$
we can introduce a ``Poisson structure" $\{\cdot,\cdot\}_\mathcal F$ by the formulas:
$$
\{y_i,y_j\}_{\mathcal F}=a_{ij}(y_1,\dots,y_l).
$$
Note that $\rank\{\cdot,\cdot\}_{\mathcal F}(y)=l-r=2q$
for all $y\in \phi(M)\setminus \Sigma$
and that $(\phi(M)\setminus\Sigma,\{\cdot,\cdot\}_{\mathcal F})$ is a Poisson manifold.

From the definition of the Poisson bracket $\{\cdot,\cdot\}_{\mathcal F}$
it follows that if smooth functions $F,G: \mathbb{R}^l\to\mathbb{R}$
are in involution with respect to $\{\cdot,\cdot\}_{\mathcal F}$ on $\phi(M)$,
then their liftings $f=F\circ\phi$ and $g=G\circ\phi$ commute on $M$:
\begin{equation} \label{1.4}
\{f,g\}(x)=\{F,G\}_{\mathcal F} (\phi(x))=0.
\end{equation}

Let $z\in \phi(M)\setminus\Sigma$.
By the theorem on the local structure of Poisson brackets (see \cite{Wa, LM}),
there is a neighborhood $U(z)\subset \phi(M)\setminus\Sigma$ of $z$ and $l$
independent smooth functions $G_1,\dots,G_l: U(z)\to\mathbb{R}$, $G_i(z)=0$ such that
\[
\{G_i,G_{i+q}\}_{\mathcal F}=1=-\{G_{i+q},G_i\}_{\mathcal F} \quad i=1,\dots, q
\]
and the remaining Poisson brackets vanish.
Let a ball $B^\alpha(\epsilon_\alpha)$ belong to $U(z)$, where
$
B^\alpha(\epsilon_\alpha)=\{y\in U(z)\mid G_1^2+\dots+G_l^2<\epsilon_\alpha\}.
$
Then, starting from the $n$ involutive functions on $B^\alpha$:
\[
    h_1=G_1^2+G_{1+q}^2,\dots,\;
    h_q=G_q^2+G_{2q}^2,\;
h_{q+1}=G_{2q+1}^2,\dots,\;
    h_n=G_l^2
\]
we can construct a smooth set of nonnegative functions
$F^\alpha_1,\dots,F^\alpha_n:\mathbb{R}^l\to\mathbb{R}$
that are independent on an open dense subset of $B^\alpha(\epsilon)$,
equal to zero outside $B^\alpha(\epsilon_\alpha)$, in involution on $\phi(M)$,
and satisfies inequalities $F_i^\alpha<e^{-\epsilon_\alpha}$ together with all derivatives.
To this end we use the construction suggested by Brailov for Darboux symplectic balls
(see \cite{TF}).

Let $g:\mathbb{R}\to\mathbb{R}$ be smooth nonnegative function,
such that $g(x)$ is equal to zero for $|x|>\epsilon_\alpha$,
monotonically increases on $[-\epsilon,0]$ and monotonically decreases on $[0,\epsilon]$.
Let $h(y)=g(h_1(y)+\dots+h_n(y))$.
This function could be extended by zero to the whole manifold.
Now, we can define $F^\alpha_i$ by: $F^\alpha_i=h\cdot h_i$.
Obviously, $\{F^\alpha_i,F^\alpha_j\}_{\mathcal F}=0$.
These functions are independent inside $B^\alpha$.
Also, we can choose $g$ in such a way that $F^\alpha_i$
satisfies the inequalities $F_i^\alpha<e^{-\epsilon_\alpha}$ together with all derivatives.

In the same way we can construct a countable family of open balls
$\{B^\alpha(\epsilon_\alpha)\}$, $B^\alpha \cap B^\beta=\emptyset$,
and functions $\{F^\alpha_1,\dots,F^\alpha_l\}$ with the above properties,
such that $B=\bigcup_\alpha B^\alpha(\epsilon_\alpha)$
is an open everywhere dense set of $\phi(M)\setminus\Sigma$.
Let us define the functions $F_1,\dots,F_n: \mathbb{R}^l\to \mathbb{R}$ by:
\[
F_i(y)=\begin{cases}
F_i^\alpha,  &y\in B^\alpha\subset B \\
0, &y\in\mathbb{R}^l\setminus B,\quad i=1,\dots,n
\end{cases}
\]

By (\ref{1.4}), the functions $g_1=F_1\circ\phi,\dots,g_n=F_n\circ\phi$
will have the desired properties.
\end{proof}

Theorem 2.2 says that the $r$-dimensional invariant tori $T^r$
can be organized into larger, $n$-dimensional Lagrangian tori $T^n$
that are level sets of a commutative algebra of integrals.
Since the tori $T^n$ are fibered into invariant tori $T^r$,
the trajectories of (\ref{hamiltonian}) are not dense on $T^n$.
In this sense, the system (\ref{hamiltonian}) is degenerate.
Thus, establishing the fact of non-commutative integrability of the system
give us more information on the behavior of its integral trajectories
than we could obtain from the usual Liouville integrability.
Note that, contrary to the case of non-degenerate integrable systems,
the fibration by Lagrangian tori is neither intrinsic nor unique.

Let $P_0=\phi(M)\setminus\Sigma$, $M_0=\phi^{-1}(P_0)$.
If all invariant submanifolds $M_c=\phi^{-1}(c)$, $c\in P_0$ are compact and connected,
then $\phi: (M_0,\{\cdot,\cdot\})\to(P_0,\{\cdot,\cdot\}_{\mathcal F})$
is a Poisson morphism and isotropic fibration.
This fibration is symplectically complete,
i.e., the symplectic orthogonal distribution to the tangent spaces
of the fibres is a foliation (see \cite{LM}).

\subsection{Generalized Action-Angle Variables}
With notations of Theorem 2.1,
let $T^r_c$ be a compact connected component of the invariant level set $M_c$.

\begin{thm} [Nekhoroshev \cite{N}]
In a neighborhood of $T^r_c$
there are \emph{generalized action-angle variables} $p,q,I,\varphi\mod2\pi$,
defined in a toroidal domain $\OO=\T^r\{\varphi\}\times B_\sigma\{I,p,q\}$,
$$
B_\sigma=\biggl\{(I_1,\dots,I_r,p_1,\dots,p_k,q_1,\dots,q_k)\in\R^l
\,\Big|\,\sum_{i=1}^r I_i^2+\sum_{j=1}^k q_j^2+p_j^2\leq\sigma^2\biggr\}
$$
such that the symplectic form becomes
$$
\omega=\sum_{i=1}^r dI_i\wedge d\varphi_i+\sum_{i=1}^k dp_i\wedge dq_i,
$$
and the Hamiltonian function $h$ depends only on $I_1,\dots,I_r$.
The Hamiltonian equations take the following form in action-angle coordinates:
\begin{equation} \label{local}
\dot \varphi_1=\omega_1(I)=\frac{\partial h}{\partial I_1}, \dots,
\dot \varphi_r=\omega_r(I)=\frac{\partial h}{\partial I_r}, \quad
\dot I=\dot p=\dot q=0.
\end{equation}
\end{thm}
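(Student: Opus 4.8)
The plan is to construct the generalized action-angle variables by first building the angle variables $\varphi$ on the torus fibres $T^r_c$, then extending transversally using the symplectically complete isotropic fibration described just after Theorem~2.2. The starting point is that, under the hypotheses of Theorem~2.1, we have $r$ commuting complete vector fields $X_1,\dots,X_r$ tangent to $T^r_c$, coming as combinations of the $X_{f_i}$ and satisfying $\omega(X_i,X_{f_j})=0$. Their joint flow defines a locally free $\R^r$-action on each compact fibre, hence by the same reasoning as in the Liouville--Arnold theorem a period lattice, yielding angle coordinates $\varphi_1,\dots,\varphi_r\mod 2\pi$ on the $r$-tori. The crucial structural input is that the isotropic fibration $\phi\colon(M_0,\{\cdot,\cdot\})\to(P_0,\{\cdot,\cdot\}_{\mathcal F})$ is a Poisson morphism and symplectically complete, so the symplectic-orthogonal distribution to the fibres integrates to a foliation transverse to the tori.

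Next I would fix the transverse variables. On the base $P_0$, since $\rank\{\cdot,\cdot\}_{\mathcal F}=l-r=2k$, the local structure theorem for Poisson brackets (invoked in the proof of Theorem~2.2) provides, near $c$, Casimir functions together with canonically paired coordinates; pulling these back by $\phi$ produces $r$ functions that will become the actions $I_1,\dots,I_r$ (the Casimir directions, conjugate to the angles) and $2k$ functions that will become the symplectic pairs $(p_i,q_i)$. The actions $I_i$ are defined intrinsically as the period integrals $I_i=\frac{1}{2\pi}\oint_{\gamma_i}\alpha$ over a basis of cycles $\gamma_i$ of the torus, where $\alpha$ is a local primitive of $\omega$; one checks these depend only on the homology class and on $c$, so they descend to functions of the base and provide the directions conjugate to $\varphi_i$. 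The $(p_i,q)$ are obtained from the symplectic-leaf coordinates on the base, transported along the symplectically orthogonal foliation.

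The main step is then to verify that in these coordinates the symplectic form takes the claimed normal form $\omega=\sum_{i=1}^r dI_i\wedge d\varphi_i+\sum_{i=1}^k dp_i\wedge dq_i$. This reduces to a Darboux-type argument: one shows $d\varphi_i$, $dI_i$, $dp_i$, $dq_i$ are independent, that $\omega$ has no $d\varphi_i\wedge d\varphi_j$ terms (isotropy of the fibres), no $dI_i\wedge dI_j$ or mixed action-symplectic terms (using that the $I_i$ Poisson-commute with everything, being Casimir pullbacks, hence the symplectic-orthogonal complement splits), and that the pairings are exactly $1$ (by the definition of $I_i$ as normalized period integrals and of $(p_i,q_i)$ from the Darboux coordinates on the symplectic leaf). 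Since $h$ is an integral lying in $\mathcal F$ and is constant on the leaves transverse to the fibres, it depends only on the $I_i$, so the equations of motion linearize as in \eqref{local}.

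\textbf{The hard part} will be showing that the symplectic-orthogonal transverse foliation can be parametrized so that the base symplectic-leaf coordinates pull back to genuine Darboux conjugate pairs $(p_i,q_i)$ \emph{globally} on the toroidal domain, i.e. consistently as one moves around the torus; this is precisely where symplectic completeness of the isotropic fibration is essential, guaranteeing that the transverse coordinates are well-defined and that no monodromy obstructs the splitting of $\omega$ into the action-angle block and the $(p,q)$ Darboux block.
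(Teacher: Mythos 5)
The paper itself offers no proof of this statement: it is quoted as Nekhoroshev's theorem \cite{N} and stated without argument, so your attempt can only be measured against the standard proof in the literature. Your overall skeleton is indeed the standard one — angle variables from the commuting vertical Hamiltonian flows on the compact fibre, action variables as normalized period integrals of a local primitive of $\omega$ (which exists near $T^r_c$ because the torus is isotropic, so $\omega$ is exact on a tubular neighborhood), and the $(p_i,q_i)$ obtained by pulling back Lie--Darboux (Weinstein splitting) coordinates from the base Poisson manifold $(P_0,\{\cdot,\cdot\}_{\mathcal F})$.

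There is, however, a genuine error at the point you yourself flag as the crux. You claim that symplectic completeness makes ``the symplectic-orthogonal distribution to the fibres integrate to a foliation \emph{transverse} to the tori,'' and you then use this foliation both to transport the leaf Darboux coordinates and to rule out monodromy. But for an \emph{isotropic} fibration the orthogonal distribution is not transverse to the fibres: isotropy says precisely $T_xF\subset (T_xF)^\omega$, so the orthogonal distribution \emph{contains} the vertical distribution, and its leaves (of dimension $l=r+2k$) are coisotropic submanifolds that contain the tori — in the final coordinates they are exactly the sets $\{I=\mathrm{const}\}$. Consequently the mechanism you rely on to build the $(p,q)$-block consistently around the torus and to kill the cross-terms $d\varphi_i\wedge dp_j$, $d\varphi_i\wedge dq_j$ simply does not exist as described; transporting along that foliation moves you along the tori as well. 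What actually closes this gap in Nekhoroshev-type proofs is a different device: the $(p,q)$ are pullbacks of base functions (hence automatically well defined, no transport needed), and the delicate step is the choice of the \emph{section} of the fibration used as the origin of the angles — one must show it can be chosen so that $\omega$ restricted to it is $\sum dp_i\wedge dq_i$ and the angle differentials acquire no transverse components (via a generating-function correction, or by invoking Weinstein's isotropic neighborhood theorem to reduce to the product model $T^*\T^r\times\R^{2k}$). Relatedly, your final claim that $h=h(I)$ needs the correct argument: $X_h$ is tangent to the fibres, hence $dh$ vanishes on $(T_xF)^\omega$, which is spanned by the $\partial_\varphi$, $\partial_p$, $\partial_q$ directions — not ``constancy on transverse leaves,'' which again presupposes the nonexistent transverse foliation.
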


The obstructions to the existence of global generalized action-angle variables
are studied in \cite{N, DD, FF}.
The action-angle variables in the case of non-compact invariant manifolds
are given in \cite{FGS}.

\begin{dfn} \cite{BoOI}
The Hamiltonian system (\ref{local})
defined in the toroidal domain $\OO=\T^r\times B_\sigma$ is \emph{$\T^r$-dense}
if the set of points $(I_0,p_0,q_0)\in B_\sigma$
for which the trajectories of (\ref{local}) are dense on the torus $\{I=I_0,p=p_0,q=q_0\}$
is everywhere dense in the ball $B_{\sigma}$.
\end{dfn}

The frequencies $\omega_1(I),\dots,\omega_r(I)$
corresponding to the dense trajectories (\ref{local}) are rationally independent.
For example, a non-degenerate system ($\det(\frac{\partial\omega}{\partial I})\neq 0$
on an open dense set of $\OO$) is $\T^r$-dense.

Any smooth first integral of a $\T^r$-dense system is a function of the variables $I,q,p$ only.

Now, let $h=h(I_1,\dots,I_r)$ be an arbitrary differentiable Hamiltonian function
defined on a toroidal domain $\OO=\T^r\{\varphi\}\times B_\sigma\{I,p,q\}$.
Then we have

\begin{thm} [Bogoyavlenski \cite{BoOI}]
There exists a family of balls $B_\tau\subset B_\sigma$
such that the union $\bigcup_\tau B_\tau$ is dense in $B_\sigma$
and the following properties hold.

\emph{(i)} In the toroidal domain $\OO_\tau=\T^r\times B_\tau$
there exists a canonical transformation:
$\{I,p,q,\varphi\}\to\{I^\tau,p^\tau,q,^\tau,\varphi^\tau\}$,
that transforms the system \eqref{local} to the form
\begin{gather*}
\dot\varphi_1^\tau=\omega_1^\tau=\frac{\partial h}{\partial I_1^\tau},\;\dots,\;
\dot\varphi_{\tilde r(\tau)}^\tau=\omega_{\tilde r}^\tau
=\frac{\partial h}{\partial I_{\tilde r}^\tau},
\\
\dot\varphi_{\tilde r+1}^\tau=0,\dots,\dot\varphi_r^\tau=0, \quad
\dot I^\tau=\dot p^\tau=\dot q^\tau=0,
\end{gather*}
$h=h(I_1^\tau,\dots,I_{\tilde r})$, $\tilde r=\tilde r(\tau)\leq r$.
The system is $\T^{\tilde r}$-dense in $\OO_\tau$
(regarded as the product $\T^{\tilde r}\times (\T^{r-\tilde r}\times B_{\tau})$).

\emph{(ii)} Moreover, if $h(I_1,\dots,I_r)$ is analytic
and the maximal dimension of the closures of trajectories is equal to $\tilde r$,
then such canonical transformation exists globally
and the system is $\T^{\tilde r}$-dense in $\OO$.
\end{thm}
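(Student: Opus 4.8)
The plan is to reduce everything to two ingredients: integer-linear changes of the angle variables, which are canonical and act on the frequency vector by the corresponding matrix, and a density statement for the actions at which the reduced frequencies are rationally independent. For $I\in B_\sigma$ write $\omega(I)=\partial h/\partial I$ for the frequency vector of \eqref{local}, and let $\Lambda_{\omega(I)}=\{k\in\mathbb{Z}^r\mid\langle k,\omega(I)\rangle=0\}$ be its resonance lattice; this lattice is primitive (if $mk$ is a resonance then so is $k$, so $\mathbb{Z}^r/\Lambda_{\omega(I)}$ is torsion-free), and the closure of a trajectory on $\{I,p,q=\mathrm{const}\}$ is a subtorus of dimension $\tilde r(I)=r-\rank\Lambda_{\omega(I)}$. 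For $B\in\mathrm{GL}(r,\mathbb{Z})$ the map $\varphi^\tau=B\varphi$, $I^\tau=(B^{-1})^{\top}I$, taken as the identity on $p,q$, descends to a diffeomorphism of $\OO=\T^r\times B_\sigma$; a direct check shows it preserves $\sum dI_i\wedge d\varphi_i+\sum dp_i\wedge dq_i$, carries \eqref{local} to the same form with new frequencies $B\omega(I)$, and replaces $h(I)$ by $h(B^{\top}I^\tau)$, whose gradient in $I^\tau$ is exactly $B\omega$.

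Next I would record the reduction step for a fixed lattice. Suppose that on a sub-ball $B'\subset B_\sigma$ the lattice $\Lambda'=\bigcap_{I\in B'}\Lambda_{\omega(I)}$ of common resonances has rank $r-\tilde r$. Since $\Lambda'$ is primitive I can extend a basis of it to a basis $v_1,\dots,v_r$ of $\mathbb{Z}^r$ with $v_{\tilde r+1},\dots,v_r$ spanning $\Lambda'$, and let $B$ be the matrix whose rows are the $v_i$. Then $(B\omega(I))_i=\langle v_i,\omega(I)\rangle\equiv0$ on $B'$ for $i>\tilde r$, so the transformed Hamiltonian depends only on $I_1^\tau,\dots,I_{\tilde r}^\tau$ and \eqref{local} acquires the stated normal form on $\OO'=\T^r\times B'$. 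A short lattice computation ($\Lambda_{B\omega}=B^{-\top}\Lambda_\omega$) shows that the transformed system is $\T^{\tilde r}$-dense on $\OO'$ exactly when the set of $I\in B'$ with $\Lambda_{\omega(I)}=\Lambda'$ (equivalently, with $(B\omega(I))_1,\dots,(B\omega(I))_{\tilde r}$ rationally independent) is dense in $B'$.

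The crux of (i) is producing a dense family of balls on which such a $\Lambda'$ is available; here I would invoke the Baire category theorem. Partition $B_\sigma$ into the countably many sets $U_\Lambda=\{I\mid\Lambda_{\omega(I)}=\Lambda\}$ indexed by primitive sublattices $\Lambda$. Given any ball $B^*\subset B_\sigma$, write $B^*=\bigcup_\Lambda(U_\Lambda\cap B^*)$; since this is a countable union filling a complete metric space, some $U_\Lambda\cap B^*$ is dense in a sub-ball $B_\tau\subset B^*$. Continuity of $\omega$ forces $\langle k,\omega\rangle\equiv0$ on $B_\tau$ for every $k\in\Lambda$ (the relation holds on the dense set $U_\Lambda\cap B_\tau$ and passes to the closure), so $\bigcap_{I\in B_\tau}\Lambda_{\omega(I)}=\Lambda$, and the density hypothesis of the previous paragraph holds by construction. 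Letting $B^*$ range over a countable base of $B_\sigma$ yields a family $\{B_\tau\}$ with $\bigcup_\tau B_\tau$ dense, which proves (i). I expect this to be the main obstacle: in the $C^\infty$ category a single resonance $\langle k,\omega\rangle=0$ may hold on a set with nonempty interior, so one cannot directly assert density of the nonresonant actions; the Baire argument is precisely what isolates, on a sufficiently small ball, a single lattice that is at once the common lattice and the generic one.

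Finally, for (ii) analyticity removes this obstruction globally. When $h$ is analytic each $I\mapsto\langle k,\omega(I)\rangle$ is real-analytic on the connected ball $B_\sigma$, hence vanishes either identically or on a nowhere-dense set of measure zero. Set $\Lambda_*=\{k\in\mathbb{Z}^r\mid\langle k,\omega\rangle\equiv0\text{ on }B_\sigma\}$. Then $\Lambda_*\subseteq\Lambda_{\omega(I)}$ for all $I$, so $\tilde r(I)\le r-\rank\Lambda_*$, with equality off the meager union of the zero-sets of the remaining $\langle k,\omega\rangle$, $k\notin\Lambda_*$. Thus the maximal closure dimension equals $r-\rank\Lambda_*$, which by hypothesis is $\tilde r$. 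A single $B\in\mathrm{GL}(r,\mathbb{Z})$ adapted to $\Lambda_*$ as in the reduction step then works on all of $\OO$, and the set where the first $\tilde r$ frequencies are rationally independent is the complement of a meager set, hence dense, giving global $\T^{\tilde r}$-density.
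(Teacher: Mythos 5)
The paper itself offers no proof of this statement: it is quoted as Theorem 2.4 from Bogoyavlenskij \cite{BoOI} without argument, so there is no in-paper proof to compare against; your proposal has to be judged on its own merits, and it holds up. The three ingredients are all in place and correctly assembled: (1) the action of $B\in\mathrm{GL}(r,\mathbb{Z})$ by $\varphi^\tau=B\varphi$, $I^\tau=B^{-\top}I$ is canonical, transforms the frequency vector to $B\omega$ and the resonance lattice to $B^{-\top}\Lambda_\omega$; (2) on a ball where the common resonance lattice $\Lambda'$ has rank $r-\tilde r$ and equals the pointwise lattice on a dense set, a unimodular matrix adapted to a basis extension of $\Lambda'$ (possible precisely because $\Lambda'$, being an intersection of saturated lattices, is itself saturated) yields the stated normal form together with $\T^{\tilde r}$-density; (3) the Baire category argument inside an arbitrary ball $B^*$ — pick a lattice $\Lambda$ whose level set is somewhere dense, pass to a sub-ball $B_\tau$, and use continuity of $\omega$ to conclude that $\Lambda$ is simultaneously the common and the generic lattice on $B_\tau$ — is exactly the right mechanism in the $C^\infty$ category, where, as you note, a single resonance may hold on a set with nonempty interior, so one cannot argue density of nonresonant actions directly. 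The analytic case (ii), via the identity theorem applied to each $\langle k,\omega\rangle$, is also correct. Two small points deserve a sentence each in a polished write-up: first, the transformed Hamiltonian genuinely depends only on $I^\tau_1,\dots,I^\tau_{\tilde r}$ because its partial derivatives in the remaining actions vanish identically on the image of the ball, and that image is convex (an ellipsoid), so the vanishing of partials integrates to actual independence of those variables; second, in (ii) the maximal closure dimension $r-\rank\Lambda_*$ is attained because the complement of the meager union of resonance zero-sets is nonempty (indeed dense), which is what licenses equating it with the hypothesised $\tilde r$.
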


\subsection{Complete Algebras of Functions}
Let $\mathcal F$ be an algebra of functions, closed under the Poisson brackets,
on the symplectic manifold $(M,\omega)$.
Let $F_x$ be the subspace of $T^*_xM$ generated by differentials of functions in $\FF$
and let $K_x\subset F_x$ be the kernel of Poisson structure restricted on $F_x$:
\begin{equation} \label{FK}
F_x=\Span\{df_(x)\mid f\in\FF\}, \quad K_x=\ker\{\cdot,\cdot\}\big|_{F_x}.
\end{equation}

Suppose that $\dim F_x=l$, $\dim K_x=r$ holds on an open dense subset $U \subset M$.
We shall denote $U$ by $\reg{\mathcal F}$ or by $\reg M$
(\emph{regular points} of $\mathcal F$).
The numbers $l$ and $r$ are usually denoted by $\ddim \mathcal F$ and $\dind\mathcal F$
and are called \emph{differential dimension} and \emph{differential index} of $\mathcal F$.

\begin{dfn}
The algebra $\mathcal F$ is said to be \emph{complete} if:
\begin{equation} \label{CAS}
\ddim{\mathcal F}+\dind{\mathcal F}=\dim M.
\end{equation}
\end{dfn}

Note that instead of algebras one usually consider sets of functions
closed under the Poisson bracket.
The notions of completeness, $\ddim$ and $\dind$ are defined just in the same way as above.

For example, suppose that independent functions $f_1,\dots,f_l$
generate a finite dimensional Lie algebra
$\mathfrak g=\mathcal F=\bigoplus_{i=1}^l\mathbb{R}f_i$ under the Poisson brackets:
$$
\{f_i,f_j\}=\sum_{k=1}^l c^k_{ij}f_k,
$$
$c^k_{ij}$ are constants.
Then the numbers $\ddim \mathcal F$ and $\dind \mathcal F$ coincide with the dimension
and the index of the Lie algebra $\mathfrak g$.

By Theorem 2.1, we can give the following definition:

\begin{dfn}
The Hamiltonian system (\ref{hamiltonian})
is \emph{completely integrable in the noncommutative sense}
if it possesses a complete algebra $\mathcal F$ of integrals.
\end{dfn}

In Definition 2.2 we do not require
that $\mathcal F$ is generated by $l=\ddim\mathcal F$ functions.
We shall briefly explain this. Let $x_0$ belong to $\reg\mathcal F$.
Then there are integrals $f_1,\dots,f_l\in \mathcal F$ that are independent in $x_0$,
where $l=\ddim \mathcal F$.
Let $V$ be the open set where the functions $f_1,\dots,f_l$ are independent.
Since $f_1,\dots,f_l$ are integrals of (\ref{hamiltonian})
the trajectory of (\ref{hamiltonian}) that has initial position in $V$ remains in $V$.
Indeed, the phase flow of (\ref{hamiltonian})
preserves the form $df_1\wedge\dots\wedge df_l$.
Therefore we can consider the restriction of (\ref{hamiltonian}) to $V$
and apply Theorem 2.1 to integrate it.

\begin{dfn}
Let $\mathcal F$ be an algebra of functions.
We shall say that $\mathcal A\subset \mathcal F$ is \emph{a complete subalgebra} if
$
\ddim{\mathcal A}+\dind{\mathcal A}=\ddim{\mathcal F}+\dind{\mathcal F}.
$
\end{dfn}

Mishchenko and Fomenko stated the conjecture
that non-commutative integrable systems are integrable in the usual commutative sense
by means of integrals that belong to the same functional class
as the original non-commutative algebra $\FF$ of integrals
(in other words, one can find complete commutative subalgebra $\mathcal A$ of $\FF$).
In the analytic case, when $\mathcal F=\Span_{\mathbb R}\{f_1,\dots,f_l\}$
is a finite-dimensional Lie algebra,
the conjecture has been proved by Mishchenko and Fomenko
in the semisimple case \cite{MF1} and just recently by Sadetov \cite{Sad}
for arbitrary Lie algebras (see also \cite{VY}).

Thus, according to Theorem 2.2, the following general conjecture remains:

\begin{conjecture}
Suppose that on a real-analytic symplectic $2n$-dimensional manifold $M$
we have a Hamiltonian system $\dot x=X_h(x)$ completely integrable
by means of a infinitely dimensional non-commutative algebra $\mathcal F$
of integrals which are real analytic functions.
Then the system possesses $n$ commuting real analytic integrals.
\end{conjecture}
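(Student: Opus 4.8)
The plan is to reduce the conjecture to a purely Poisson-geometric statement about the base of the moment map, and then to attack that statement by combining the local normal form for Poisson structures with the already-resolved linear (Lie-algebra) case.

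First I would invoke the reduction implicit in the proof of Theorem 2.2. Since $\FF$ is functionally generated by finitely many analytic integrals $f_1,\dots,f_l$ with $\{f_i,f_j\}=a_{ij}(f_1,\dots,f_l)$, the moment map $\phi=(f_1,\dots,f_l)\colon M\to\R^l$ is a Poisson morphism onto the image $P=\phi(M)$ equipped with the analytic bracket $\{y_i,y_j\}_\FF=a_{ij}(y)$. Completeness of $\FF$ means $\ddim\FF+\dind\FF=\dim M=2n$; writing $l=\ddim\FF$, $r=\dind\FF$, we have $l+r=2n$ and $\rank\{\cdot,\cdot\}_\FF=l-r=2q$ on $P\setminus\Sigma$, so the generic symplectic leaves have dimension $2q$ and the generic corank is $r=n-q$. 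By \eqref{1.4}, liftings $g_i=G_i\circ\phi$ of functions $G_i$ in involution for $\{\cdot,\cdot\}_\FF$ automatically Poisson-commute on $M$, and independence of the $dG_i$ at a point of $P_0$ pulls back to independence of the $dg_i$ on $\reg\FF$. Hence the conjecture follows from the following analytic assertion: on the analytic Poisson manifold $(P,\{\cdot,\cdot\}_\FF)$ there exist $n=q+r$ analytic functions in involution that are functionally independent at some point (recall that in the analytic setting independence at one point already forces independence on a dense open set). In other words, one must produce a \emph{complete analytic commutative subalgebra} $\mathcal A\subset\FF$.

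Next I would build such a family locally and try to globalize it. Near a regular point $z\in P\setminus\Sigma$ the Weinstein splitting theorem gives analytic coordinates in which the structure is the product of the standard symplectic bracket on $\R^{2q}$ and the zero bracket on $\R^r$; the $r$ transverse coordinates are local Casimirs and, together with $q$ of the Darboux coordinates, form a local complete involutive set of exactly $n=q+r$ analytic functions. The corank-$r$ part can in fact be made global, since the Casimirs of $\{\cdot,\cdot\}_\FF$ supply $r$ independent analytic functions in involution with everything. Thus the essential task is to extend the remaining $q$ functions, which cut out a Lagrangian foliation of the generic symplectic leaves, to globally defined analytic functions in involution. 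To organize this I would try to reduce to the settled linear case: the transverse Poisson structure to a symplectic leaf vanishes along the leaf, and when its linearization is semisimple, Conn's analytic linearization theorem identifies it near the leaf with the Lie--Poisson structure on some $\mathfrak{g}^*$, where Sadetov's theorem (and, for $\mathfrak{g}$ semisimple, the Mishchenko--Fomenko argument-shift construction) furnishes an analytic complete involutive family. Patching these leafwise families with the global Casimirs would then give the required $n$ functions.

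The hard part will be precisely the global analytic assembly. In the $C^\infty$ proof of Theorem 2.2 one glues the Darboux-ball constructions by multiplying with bump functions supported in disjoint balls, and this is exactly where analyticity is destroyed; the analytic category admits no partitions of unity to replace it. So the genuine difficulty is to turn the local (or leafwise) involutive families into single-valued analytic functions on a dense open subset of $P$ without any cutoff, while keeping them in involution across the strata where $\rank\{\cdot,\cdot\}_\FF$ drops. I expect the scheme to go through when the $a_{ij}$ are linear (recovering Sadetov) or polynomial, and I expect the obstruction in the general analytic case to be the absence of a global normal form: both Conn's theorem and the splitting theorem are only local, and the monodromy of the symplectic foliation, together with degeneration of the transverse structure along $\Sigma$, may prevent the local analytic involutive families from being glued into globally defined analytic integrals. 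This is why the statement remains a conjecture rather than a theorem.
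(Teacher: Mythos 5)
The statement you were asked to prove is not a theorem of the paper: it is stated there as Conjecture 2.1, with no proof offered, and it remains open. Your proposal, to its credit, does not pretend otherwise --- it ends by explaining why the statement ``remains a conjecture rather than a theorem'' --- so there is no paper proof to compare against, and the genuine gap in your argument is, unavoidably, the conjecture itself. What you have actually established is only the reduction: since $\FF$ is functionally generated by $f_1,\dots,f_l$, the moment map $\phi$ carries the problem to the base Poisson manifold $(P,\{\cdot,\cdot\}_\FF)$, and by the lifting identity \eqref{1.4} it suffices to produce a complete analytic involutive family there. This reduction is sound and is consistent with how the paper itself frames the Mishchenko--Fomenko circle of ideas (finding a complete commutative subalgebra $\mathcal A\subset\FF$), with one caveat worth noting: the conjecture only asks for $n$ commuting analytic integrals of the system, which need not be pullbacks by $\phi$, so your reduction proves a formally stronger statement if it succeeds, and a failure of your scheme would not by itself refute the conjecture.

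Your diagnosis of the obstruction is accurate and matches the structure of the paper. The smooth proof of Theorem 2.2 hinges on Brailov's construction: local Darboux--Weinstein involutive families are cut off by bump functions $g$ supported in disjoint balls $B^\alpha(\epsilon_\alpha)$ and glued by zero extension --- a device with no analytic counterpart, which is precisely why the paper can prove the $C^\infty$ statement but must leave the analytic one as a conjecture. The further steps you sketch do not close this gap: the Weinstein splitting and Conn's linearization are local, Conn's theorem moreover requires the linearized transverse structure to be semisimple, and Sadetov's theorem resolves only the case where $\FF$ is a finite-dimensional Lie algebra, whereas the conjecture concerns infinite-dimensional algebras of analytic integrals. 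So your proposal should be read as a correct and well-organized account of why the problem is hard, not as progress toward a proof; any actual resolution would have to supply the global analytic assembly that you (rightly) identify as missing.
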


\subsection{ Noncommutative Integrability on Poisson Manifolds}
One can easily formulate the above setting for Hamiltonian systems $\dot f=\{f,h\}$
on a Poisson manifold $(M,\{\cdot,\cdot\})$.
Instead of (\ref{CAS}), the algebra of functions $\mathcal F$ is said to be complete if
$
\ddim\FF+\dind\FF=\dim M+\corank\{\cdot,\cdot\}.
$
Also, in this case, regular compact connected components level-sets of functions
within $\FF$ are tori of dimension $\dim M-\ddim\FF=\dind\FF-\corank\{\cdot,\cdot\}$.

The equivalent definition of the completeness is as follows.
Let $\Lambda$ be the bivector field on $M$ associated to the Poisson structure
$
\{f_1,f_2\}(x)=\Lambda_x(df_1(x),df_2(x)).
$
We say that $\mathcal F$ is \emph{complete at $x$} if the space $F_x$
defined by (\ref{FK}) is coisotropic:
\begin{equation} \label{F-LAMBDA}
F^\Lambda_x \subset F_x.
\end{equation}
Here $F^\Lambda_x$ is skew-orthogonal complement of $F_x$ with respect to $\Lambda$:
$$
F^\Lambda_x=\{X\in T^*_x M \mid \Lambda_x(F_x,X)=0\}.
$$

The algebra $\mathcal F$ is \emph{complete } if it is complete at a generic point $x\in M$.
It is clear that in this case $K_x=F^\Lambda_x$, for a generic $x\in M$.

\begin{rem}\label{SNI}
Specially, for symplectic manifolds it is natural to state the
completeness criterium in terms of the symplectic structure.
Set
\[
W_x=\{X_f(x) \mid f\in {\FF}\}, \quad
D_x=\{ X \in T_x M \mid (F_x,X)=0 \}.
\]
The condition (\ref{F-LAMBDA}) is equivalent to the coisotropy
of $W_x$ ($W_x^\omega \subset W_x$) and isotropy of $D_x$ ($D_x^\omega \supset D_x$)
in the symplectic linear space $T_xM$.
\end{rem}

\section{Symmetries and Reductions}

\subsection{Hamiltonian $G$-Actions}
Let a connected Lie group $G$ act on $2n$-dimen\-sional connected symplectic manifold $(M,\omega)$.
The action is \emph{Hamiltonian} if $G$ acts on $M$ by symplectomorphisms
and there is a well-defined momentum mapping:
\begin{equation} \label{moment_map}
\Phi: M \to \mathfrak g^*
\end{equation}
($\mathfrak g^*$ is a dual space of the Lie algebra $\mathfrak g$)
such that one-parameter subgroups of symplectomorphisms
are generated by the Hamiltonian vector fields of functions
\begin{equation} \label{one}
\phi_\xi(x)=(\Phi(x),\xi), \quad \xi\in\mathfrak g
\end{equation}
and
\begin{equation} \label{com}
\phi_{[\xi_1,\xi_2]}=\{\phi_{\xi_1},\phi_{\xi_2}\}.
\end{equation}
Then $\Phi$ is equivariant with respect to the given action of $G$ on $M$
and the co-adjoint action of $G$ on $\mathfrak g^*$:
$$
\Phi(g\cdot x)=\Ad_g^*(\Phi(x)).
$$
In particular, if $\mu$ belongs to $\Phi(M)$,
then the co-adjoint orbit ${\mathcal O}(\mu)$ belongs to $\Phi(M)$ as well.

As the important example, consider the $G$ action on the configuration space $Q$.
The action can be naturally extended to the Hamiltonian action on $(T^*Q,\omega)$:
$$
g\cdot (q,p)=(g\cdot q, (dg^{-1})^*p)
$$
with the momentum mapping $\Phi$ given by (e.g., see \cite{LM})
\begin{equation} \label{ctg-mm}
(\Phi(q,p),\, \xi)=(p,\, \xi_q), \quad\xi\in\g.
\end{equation}
Here $\xi_q$ is the vector given by the action of one-parameter subgroup $\exp(t\xi)$ at $q$.

\subsection{ Symplectic Reduction}
Now, let $G$ be a Lie group with a free and proper Hamiltonian action
on a symplectic manifold $(M,\omega)$ with the momentum mapping (\ref{moment_map}).
Assume that $\eta$ is a regular value of $\Phi$,
so that $M_\eta=\Phi^{-1}(\eta)$ and $M_{\mathcal O_\eta}=\Phi^{-1}(\mathcal O_\eta)$ are smooth manifolds.
Here $\mathcal O_\eta=G/G_\eta$ is the coadjoint orbit of $\eta$.
The manifolds $M_\eta$ and $M_{\mathcal O_\eta}$
are $G_\eta$-invariant and $G$-invariant, respectively.

There is a unique symplectic structure $\omega_\eta$ on $N_\eta=M_\eta/G_\eta$ satisfying
\[
\omega|_{M_\eta}=d\pi_\eta^*\omega_\eta
\]
where $\pi_\eta: M_\eta\to N_\eta$ is the natural projection
(Marsden and Weinstein \cite{MaWe}).

According to Noether's theorem, if $h$ is a $G$-invariant function,
then the momentum mapping $\Phi$ is an integral of the Hamiltonian system (\ref{hamiltonian}).
In addition, its restriction to the invariant submanifold $M_\eta$
projects to the Hamiltonian system
\begin{equation} \label{reduced-eta}
\dot y=X_{H_\eta}
\end{equation}
on the reduced space $N_\eta$ with $H_\eta$ defined by
\begin{equation} \label{induced_ham}
h|_{M_\eta}=H_\eta\circ\pi_\eta.
\end{equation}

\subsection{ Poisson Reduction}
An alternative description of the reduced space is as follows.
Let $\{\cdot,\cdot\}$ be the canonical Poisson bracket on $(M,\omega)$.
Then the manifold $M/G$ carries the induced Poisson structure $\{\cdot,\cdot\}^G$
defines as follows. Let
$$
\pi: M\to M/G
$$
be the natural projection.
In what follows, by capital letters we shall denote the functions on $M/G$
and with small letters corresponding $G$-invariant functions on $M$:
\[
f=F\circ\pi.
\]
Then
\[
\{F_1,F_2\}^G \circ \pi=\{f_1,f_2\}.
\]

The mapping $N_\eta \to M_{\mathcal O_\eta}/G$
which assigns to the $G_\eta$-orbit of $x\in M_\eta$ the $G$-orbit
through $x$ in $M_{\mathcal O_\eta}$ establish the symplectomorphism between $N_\eta$
and $M_{\mathcal O_\eta}/G$ (e.g., see \cite{OR}).
In particular, $M_{\mathcal O_\eta}/G$ is the symplectic leaf in $(M/G, \{\cdot,\cdot\}^G)$.

The Hamiltonian flow (\ref{hamiltonian}) projects to the Hamiltonian flow
\begin{equation} \label{reduced-flow}
\dot F=\{F,H\}^G, \qquad F\in C^\infty (M/G)
\end{equation}
on the reduced space $M/G$.
The system (\ref{reduced-eta}) is actually the restriction of (\ref{reduced-flow})
to the symplectic leaf $M_{\mathcal O_\eta}/G$.

\subsection{ Cotangent Bundle Reduction}
Let $G$ be a connected Lie group acting freely and properly on $Q$
and $\rho: Q\to B=Q/G$ be the canonical projection.
Then $0$ is the regular value of the cotangent bundle momentum mapping (\ref{ctg-mm})
and the reduced space $(\Phi^{-1}(0)/G,\omega_0)$
is symplectomorphic to $T^*(Q/G)$, e.g., see \cite{OR}.

Now, let $(Q,\kappa,v)$ be a $G$-invariant natural mechanical system.
That is $G$ acts by isometries and the potential is the pull back $v=V\circ\rho$
of the potential $V$ defined on $Q/G$.
Therefore, the corresponding Hamiltonian function (\ref{ham})
is $G$-invariant and the system can be reduced to $T^*(Q/G)$.
The reduced system is also natural mechanical system on $Q/G$ with the potential $V$
and metric which has the clear geometrical interpretation.

Let
\begin{equation} \label{vert}
\mathcal V_q=\{\xi_q \mid \xi\in\g\}
\end{equation}
be the tangent space to the fibber $G\cdot q$ (\emph{vertical space at} $q$)
and $\mathcal V=\bigcup_q\mathcal V_q$ be the vertical distribution.

Consider the \emph{horizontal distribution} $\mathcal H=\bigcup_q\mathcal H_q\subset TQ$
orthogonal to $\mathcal V$ with respect to the metric $\kappa$.
Equivalently, $\mathcal H$ is the zero level-set
of the tangent bundle momentum mapping $\Phi_l$:
\begin{equation} \label{horizontal}
\mathcal H=\Phi_l^{-1}(0), \quad (\Phi_l(q,\dot q),\,\xi)
=\left(\frac{\partial{l}}{{\partial \dot q}},\,\xi_q\right)
=(\kappa_q \dot q,\,\xi_q), \quad \xi\in\g.
\end{equation}

The horizontal distribution $\mathcal H$ is $G$-invariant
and the quotient space $\mathcal H/G$
can be naturally identified with the tangent bundle $T(Q/G)$.
The restriction of the metric $\kappa$ to $\mathcal H$
define the \emph{submersion metric} $K$ on the reduced space $Q/G$ via
$$
K(X_1,X_2)_{\rho(q)}=\kappa(\bar X_1,\bar X_2)_q, \quad X_i\in T_{\rho(q)}(Q/G), \;
\bar X_i \in \mathcal H_q, \; X_i=d\rho(\bar X_i).
$$

Since
$(T^*Q)_0=\Phi^{-1}(0)=\bigcup_q \ann \mathcal V_q$, \
$\ann \mathcal V_q=\{p\in T^*_q Q\mid (p,\xi_q)=0,\,\xi\in\g\}$,
after passing to the quotient spaces $\mathcal H/G\approx T(Q/G)$
and $(T^*Q)_0/G\approx T^*(G/H)$ we see that the reduced Hamiltonian
is the Hamiltonian of the natural mechanical system $(Q/G,K,V)$.

\begin{thm} \label{LR}
The trajectories $q(t)$ of the $G$-invariant, natural mechanical system $(Q,\kappa,v)$
with velocities $q(t)$ that belong to $\mathcal H$ project to the trajectories $\rho(q(t))$
of the reduced natural mechanical system $(Q/G,K,V)$
\end{thm}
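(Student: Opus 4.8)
The plan is to recast the statement in the language of symplectic reduction set up in the preceding subsections, using the Legendre transformation to pass between the tangent and cotangent pictures. The crucial observation is that the horizontal velocity condition is nothing but the zero momentum level set: a velocity $\dot q$ lies in $\mathcal{H}_q$ if and only if $\kappa(\dot q,\xi_q)=0$ for all $\xi\in\g$, that is, if and only if the associated momentum $p=\kappa_q\dot q$ satisfies $(p,\xi_q)=0$ for all $\xi$, which is exactly $p\in\ann\mathcal{V}_q$. Since by (\ref{ctg-mm}) and (\ref{horizontal}) one has $\Phi(q,\kappa_q\dot q)=\Phi_l(q,\dot q)$, the Legendre map carries $\mathcal{H}=\Phi_l^{-1}(0)$ onto $(T^*Q)_0=\Phi^{-1}(0)$, so the hypothesis $\dot q(t)\in\mathcal{H}$ translates into $(q(t),p(t))\in\Phi^{-1}(0)$.

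First I would check that this condition is preserved by the dynamics, so that the hypothesis is self-consistent. Since $h$ is $G$-invariant, Noether's theorem guarantees that $\Phi$ is a first integral of (\ref{1}), hence $\Phi^{-1}(0)$ is an invariant submanifold; equivalently, a trajectory that is horizontal at one instant stays horizontal. Next I would invoke the Marsden--Weinstein reduction at the value $\eta=0$. As $0$ is a fixed point of the coadjoint action we have $G_0=G$, and the restricted flow on $M_0=\Phi^{-1}(0)$ projects under $\pi_0:M_0\to N_0=M_0/G$ to the reduced Hamiltonian flow $\dot y=X_{H_0}$ of (\ref{reduced-eta}). By the cotangent bundle reduction theorem recalled above, $N_0$ is symplectomorphic to $T^*(Q/G)$ with its canonical form, and the computation preceding the statement identifies $H_0$ with the Hamiltonian of the natural mechanical system $(Q/G,K,V)$. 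Therefore the reduced flow is precisely the flow of $(Q/G,K,V)$.

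It then remains to match base projections. The identification $N_0\cong T^*(Q/G)$ sends the $G$-orbit of $(q,p)\in\Phi^{-1}(0)$ to the covector on $Q/G$ at the point $\rho(q)$ determined by $p$; this is well defined exactly because $p$ annihilates $\mathcal{V}_q$ and the assignment is $G$-equivariant. Consequently the bundle projection $T^*(Q/G)\to Q/G$ composed with $\pi_0$ agrees with $\rho$ composed with the bundle projection $T^*Q\to Q$, so the base point of $\pi_0(q(t),p(t))$ is $\rho(q(t))$. Combined with the previous paragraph, this shows that $\rho(q(t))$ is the base trajectory of the flow of $(Q/G,K,V)$, which is the assertion.

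I expect the main obstacle to be the bookkeeping of the two compatible Legendre pictures: verifying that the tangent- and cotangent-bundle momentum mappings correspond under $p=\kappa_q\dot q$, and that the submersion metric $K$ is precisely the Legendre dual of the reduced kinetic energy on $\ann\mathcal{V}_q/G\cong T^*(Q/G)$, so that pushing the reduced Hamiltonian flow back through $K$ genuinely yields the reduced natural mechanical system rather than merely a system with the same base curves. Once these identifications are in place the remainder is a routine diagram chase; the analytic substance is entirely absorbed into the cotangent bundle reduction theorem and the submersion-metric computation established before the statement.
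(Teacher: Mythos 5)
Your proposal is correct and follows essentially the same route as the paper: the paper's (implicit) justification is precisely the passage to the cotangent picture via the Legendre transformation, the identification $\mathcal H=\Phi_l^{-1}(0)\leftrightarrow\Phi^{-1}(0)=\bigcup_q\ann\mathcal V_q$, Marsden--Weinstein reduction at the zero momentum value, and the identification of the reduced space with $T^*(Q/G)$ carrying the Hamiltonian of $(Q/G,K,V)$. The details you add (Noether invariance of $\Phi^{-1}(0)$ and the matching of base projections) are exactly the bookkeeping the paper leaves to the reader and the cited references \cite{AKN, MRS}.
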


For Abelian groups this is the classical method of Routh
for eliminating cyclic coordinates \cite{Ro}.
The non-Abelian construction for the zero level-set of and for the other values
of the momentum mapping is given in \cite{AKN} and \cite{MRS}, respectively.
The above formulation is taken from \cite{AKN}.
In the case of geodesic flows the horizontal geodesic lines projects
to the geodesic lines of the submersion metric.

\section{Integrable Systems Related to Hamiltonian Actions}

\subsection{Collective Motions}
Consider the following two natural classes of functions on $M$.
Let ${\mathcal F}_1$ be the set of functions in $C^\infty(M)$
obtained by pulling-back the algebra $C^\infty(\mathfrak g^*)$ by
the moment map ${\mathcal F}_1=\Phi^*C^\infty(\mathfrak g^*)$. Let
${\mathcal F}_2$ be the set of $G$-invariant functions in
$C^\infty(M)$. The mapping $f\mapsto f\circ\Phi$ is a morphism of
Poisson structures:
\begin{equation} \label{2.4}
 \{f_1\circ\Phi,f_2\circ\Phi\}(x)=\{f_1,f_2\}_{\mathfrak g^*}(\eta), \quad \eta=\Phi(x),
\end{equation}
where $\{\cdot,\cdot\}_{\mathfrak g^*}$ is the Lie--Poisson bracket on $\mathfrak g^*$:
\begin{equation} \label{2.5}
\{f_1,f_2\}_{\mathfrak g^*}(\eta)=(\eta,[df_1(\eta),df_2(\eta)]), \quad
f_1,f_2: \mathfrak g^*\to \mathbb{R}.
\end{equation}
Thus, ${\mathcal F}_1$ is closed under the Poisson bracket.
Since $G$ acts in a Hamiltonian way, ${\mathcal F}_2$
is closed under the Poisson bracket as well.
The second essential fact is that $h\circ\Phi$ commute with any $G$-invariant function
(the Noether theorem).
In other words $\{{\mathcal F}_1,{\mathcal F}_2\}=0$.

\begin{assumption} [Separation of generic orbits by invariant functions]
Let a general orbit of the action have dimension $m$.
We shall suppose that
\begin{equation} \label{2.7}
\Span\{df(x),\; f\in{\mathcal F}_2\}=\ann (T_x(G\cdot x)),
\end{equation}
for general $x\in M$, $\dim G\cdot x=m$.
Whence $\ddim{\mathcal F}_2=2n-m$.
By $\reg{\mathcal F}_2$ denote the open dense set where (\ref{2.7}) holds.
\end{assumption}

Assumption 4.1 holds for any proper group action because all orbits
are separated by invariant functions.
Moreover, for the action of a compact group $G$ the algebra ${\mathcal F}_2$
is generated by a finite number of functions.
To be more precise, let the $G$ action have a finite number of orbit types.
Then there exist functions $f_1,\dots,f_r\in{\mathcal F}_2$,
such that every function $f\in{\mathcal F}_2$ is of the form: $f=F(f_1,\dots,f_r)$.
This theorem was proved by Schwarz \cite{[29]}.
If $M$ is compact then $M$ has a finite number of orbit types.
Furthermore, Mann proved that if $M$ is an orientable manifold
whose homology groups $H_i(M,\mathbb{Z})$ are finitely generated,
then the number of orbit types of any action of a compact Lie group on $M$ is finite.
A review of results concerning invariant functions of $G$ actions can be found in \cite{[27]}.

The following theorem, although it is a reformulation of some well known facts
about the momentum mapping (e.g., see \cite{GS}),
is fundamental in the considerations below.

Let $\mathcal A\subset C^\infty (\mathfrak g)$ be a Lie subalgebra
and $\Phi^*\mathcal A=\{h\circ\Phi,\;h\in \mathcal A\}$
the pull-back of $\mathcal A$ by the momentum mapping.
Then we have:

\begin{thm} [Bolsinov and Jovanovi\'c \cite{BJ2}] \label{Bol_Jov}
Let a connected Lie group $G$
act on $2n$-dimensional connected symplectic manifold $(M,\omega)$.
Suppose the action is Hamiltonian and Assumption 4.1 holds.
Then:

\emph{(i)} The algebra of functions ${\mathcal F}_1+{\mathcal F}_2$ is complete:
$$
\ddim({\mathcal F}_1+{\mathcal F}_2)+\dind({\mathcal F}_1+{\mathcal F}_2)=\dim M.
$$

\emph{(ii)} $\Phi^*{\mathcal A}+{\mathcal F}_2$ is a complete algebra on $M$
if and only if $\mathcal A$ is a complete algebra on a generic adjoint orbit
${\mathcal O}(\mu)\subset \Phi(M)$.

\emph{(iii)} If $\mathcal B$ is complete (commutative) subalgebra of $\mathcal{F}_2$
and $\mathcal A$ is complete (commutative) algebra on the orbit ${\mathcal O}(\mu)$,
for generic $\mu\in\Phi(M)$ then $\Phi^*{\mathcal A}+{\mathcal B}$
is complete (commutative) algebra on $M$.
\end{thm}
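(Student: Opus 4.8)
My plan is to verify completeness throughout via the symplectic criterion of Remark \ref{SNI}: an algebra is complete at $x$ exactly when $W_x=\{X_f(x)\mid f\in\FF\}$ is coisotropic, $W_x^\omega\subset W_x$. The whole argument then reduces to linear algebra in $T_xM$ built from the orbit tangent $\mathcal V_x=T_x(G\cdot x)$ and its symplectic orthogonal $\mathcal V_x^\omega$. First I would record two identifications. For $\mathcal F_1$: since $X_{\phi_\xi}=\xi_M$ and $d(h\circ\Phi)(x)$ generates the vector $(dh(\Phi(x)))_M(x)$ with $dh(\Phi(x))$ ranging over all of $\g$, one gets $\{X_f(x)\mid f\in\mathcal F_1\}=\mathcal V_x$. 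For $\mathcal F_2$: Assumption 4.1 gives $\Span\{df(x)\}=\ann\mathcal V_x$, and the symplectic isomorphism $T_x^*M\to T_xM$ (which carries annihilators to symplectic orthogonals) yields $\{X_f(x)\mid f\in\mathcal F_2\}=\mathcal V_x^\omega$. Part (i) is then immediate: $W_x=\mathcal V_x+\mathcal V_x^\omega$, so $W_x^\omega=\mathcal V_x^\omega\cap\mathcal V_x\subset W_x$.

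For part (ii) the point is to transport the condition to the coadjoint orbit. Writing $\eta=\Phi(x)$ and $W_x^{\mathcal A}=\{X_f(x)\mid f\in\Phi^*\mathcal A\}\subset\mathcal V_x$, I would use the moment-map relations $d\Phi_x(\xi_M(x))=-\ad_\xi^*\eta$ and $\omega_x(\xi_M(x),\zeta_M(x))=(\eta,[\zeta,\xi])$ to show that $d\Phi_x$ descends to a symplectic isomorphism $\pi\colon\mathcal V_x/(\mathcal V_x\cap\mathcal V_x^\omega)\to(T_\eta\OO(\mu),\omega_{\OO})$, under which $W_x^{\mathcal A}$ maps onto the span $W_\eta^{\mathcal A}$ of the Hamiltonian vector fields of $\mathcal A$ on the orbit (since $\ad^*_{da(\eta)}\eta$ is exactly the orbit Hamiltonian field of $a$). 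Now $W_x=W_x^{\mathcal A}+\mathcal V_x^\omega$, and a short computation gives $W_x^\omega=(W_x^{\mathcal A})^\omega\cap\mathcal V_x$. Intersecting the coisotropy inclusion with $\mathcal V_x$ and applying the modular law, completeness on $M$ becomes $(W_x^{\mathcal A})^\omega\cap\mathcal V_x\subset W_x^{\mathcal A}+(\mathcal V_x\cap\mathcal V_x^\omega)$. Both sides contain $\ker\pi=\mathcal V_x\cap\mathcal V_x^\omega$, so applying $\pi$ turns this into $(W_\eta^{\mathcal A})^{\omega_{\OO}}\subset W_\eta^{\mathcal A}$, i.e. completeness of $\mathcal A$ on $\OO(\mu)$. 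This gives the equivalence in both directions at once.

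For part (iii) I would first note commutativity of $\Phi^*\mathcal A+\mathcal B$: the two inner brackets vanish by hypothesis and the cross terms vanish by Noether, $\{\mathcal F_1,\mathcal F_2\}=0$. Hence $W_x=W_x^{\mathcal A}+W_x^{\mathcal B}$ is isotropic and $\dim W_x\le n$; it remains to force equality. Set $R=\mathcal V_x\cap\mathcal V_x^\omega$. Completeness of $\mathcal A$ on the orbit gives $\dim\pi(W_x^{\mathcal A})=\tfrac12\dim\OO(\mu)$, while $\ddim\mathcal F_2=2n-m$ and $\dind\mathcal F_2=\dim R$ force a complete commutative $\mathcal B\subset\mathcal F_2$ to have $\dim W_x^{\mathcal B}=n-\tfrac12\dim\OO(\mu)$. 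Crucially this is exactly the maximal isotropic dimension inside $(\mathcal V_x^\omega,\omega|_{\mathcal V_x^\omega})$, whose radical is $R$; therefore $W_x^{\mathcal B}$ is a maximal isotropic subspace of $\mathcal V_x^\omega$ and must contain $R$. Since $W_x^{\mathcal A}\subset\mathcal V_x$ and $W_x^{\mathcal B}\subset\mathcal V_x^\omega$ give $W_x^{\mathcal A}\cap W_x^{\mathcal B}\subset R$, the inclusion $R\subset W_x^{\mathcal B}$ yields $W_x^{\mathcal A}\cap W_x^{\mathcal B}=W_x^{\mathcal A}\cap R$, and the dimension formula collapses to $\dim(W_x^{\mathcal A}+W_x^{\mathcal B})=\dim W_x^{\mathcal A}+\dim W_x^{\mathcal B}-\dim(W_x^{\mathcal A}\cap R)=n$. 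An isotropic subspace of dimension $n$ is Lagrangian, so the algebra is complete.

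The step I expect to be most delicate is not any single identity but the genericity bookkeeping that makes them hold simultaneously: I must choose $\reg\FF\subset M$ so that $\dim\mathcal V_x$, $\dim\g_x$, $\dim\g_\eta$ and the orbit type are locally constant and so that $\Phi$ carries it into the regular points of $\OO(\mu)$ on which $\mathcal A$ is complete; only there do the counts for $W_x^{\mathcal A}$, $W_x^{\mathcal B}$ and the identification $\pi$ all apply together. The remaining care is purely in signs and in confirming $\dind\mathcal F_2=\dim R$, which underlies the pivotal observation in (iii) that $\dim W_x^{\mathcal B}$ attains the maximal isotropic bound and hence absorbs the radical $R$.
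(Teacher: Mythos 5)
Your item (i) is correct and is, up to duality, the paper's own argument: the paper works with the level--set tangent spaces $D_1=\ker d\Phi(x)$ and $D_2=T_x(G\cdot x)$ and shows $D_1\cap D_2\subset (D_1\cap D_2)^\omega$ using $\ker d\Phi(x)=(T_x(G\cdot x))^\omega$, while you show coisotropy of $W_x=\mathcal V_x+\mathcal V_x^\omega$; since $W_x=(D_1\cap D_2)^\omega$, these are the same computation read through the two halves of Remark \ref{SNI}. Where you genuinely go beyond the text is in (ii) and (iii): the paper's proof consists of item (i) alone, with the sentence ``It is enough to prove item (i)'' and an implicit deferral to \cite{BJ2}, whereas you supply the missing arguments. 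Your (ii) is sound: the identification of $\mathcal V_x/R$, $R=\mathcal V_x\cap\mathcal V_x^\omega$, with $T_\eta\OO(\mu)$ via $d\Phi_x$ (carrying $W_x^{\mathcal A}$ onto the span of the orbit Hamiltonian fields of $\mathcal A$), the modular-law rewriting of completeness as $(W_x^{\mathcal A})^\omega\cap\mathcal V_x\subset W_x^{\mathcal A}+R$, and the observation that both sides contain $R$ so the inclusion descends to the orbit, together give the equivalence in both directions. Your (iii) is also correct as far as it goes; its pivot --- that a commutative complete $\mathcal B\subset\FF_2$ has $\dim W_x^{\mathcal B}=n-\tfrac12\dim\OO(\mu)$, which is exactly the maximal isotropic dimension inside $(\mathcal V_x^\omega,\omega|_{\mathcal V_x^\omega})$ and therefore forces $R\subset W_x^{\mathcal B}$ --- is the right piece of linear algebra, and the count $\dim(W_x^{\mathcal A}+W_x^{\mathcal B})=n$ then closes the argument. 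Your closing genericity remarks (constancy of $\dim\g_x$, $\dim\g_\eta$, orbit dimension, and compatibility of ``generic in $M$'' with ``generic on a generic orbit'') are indeed the only bookkeeping required, at the same level of rigor as the paper.

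One genuine, if modest, gap remains: item (iii) with its parenthetical ``(commutative)'' asserts two statements, and your proof covers only the commutative reading, since it relies essentially on isotropy of $W_x^{\mathcal B}$, on $W_\eta^{\mathcal A}$ being Lagrangian in $T_\eta\OO(\mu)$, and on the maximal-isotropic dimension count. For merely complete (noncommutative) $\mathcal A$ and $\mathcal B$ the dimension bookkeeping is different and the sum $W_x^{\mathcal A}+W_x^{\mathcal B}$ need not be Lagrangian, so a different closing step is needed: for instance, characterize completeness of $\mathcal B$ as a subalgebra of $\FF_2$ by coisotropy of the image of $W_x^{\mathcal B}$ in the symplectic quotient $\mathcal V_x^\omega/R$ --- exactly parallel to your treatment of $\mathcal A$ in (ii) --- and then deduce $W_x^\omega=(W_x^{\mathcal A})^\omega\cap(W_x^{\mathcal B})^\omega\subset W_x$ by combining the two quotient coisotropy conditions. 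Without this (or some substitute), the noncommutative half of (iii) is unproved in your write-up.
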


\begin{proof}
It is enough to prove item (i).
We shall need a well known fact that $\ker d\Phi(x)$ is symplectically orthogonal
to the tangent space at $x$ to the orbit of $x$ (see \cite{GS, LM}):
\begin{equation} \label{2.8}
\ker d\Phi(x)=(T_x(G\cdot x))^\omega.
\end{equation}

Let $U=\reg({\mathcal F}_1+{\mathcal F}_2)=\reg {\mathcal F}_2$
be the open dense set in $M$, such that for $x\in U$ the moment map (\ref{moment_map})
has maximal rank (by (\ref{2.8}) it is equivalent to the fact
that the orbit $G\cdot x$ has maximal dimension) and that (\ref{2.7}) holds.
Let $x$ belong to $\reg({\mathcal F}_1+{\mathcal F}_2)$.
Consider the tangent spaces to the level sets of integrals $\FF_1$ and $\FF_2$:
\begin{eqnarray*}
&D_1=D_1(x)=\Span\{X \mid (df_x,X)=0,\, f\in {\mathcal F}_1\}\subset T_x M \\
&D_2=D_2(x)=\Span\{X \mid (df_x,X)=0,\, f\in {\mathcal F}_2\}\subset T_x M
\end{eqnarray*}
We shall prove that $D_1\cap D_2$ is isotropic: $D_1\cap D_2\subset(D_1\cap D_2)^\omega$
which is equivalent to the completeness of the algebra ${\mathcal F}_1+{\mathcal F}_2$
(see remark \ref{SNI}).
For $x\in\reg({\mathcal F}_1+{\mathcal F}_2)$
we have $D_1=\ker d\Phi(x)$ and $D_2=T_x(G\cdot x)$.
Therefore by (\ref{2.8})
\[
(D_1\cap D_2)^\omega=D_1^\omega+D_2^\omega=D_2+D_1\supset D_1\cap D_2.\qedhere
\]
\end{proof}

Following Arnold \cite{Ar}, define $\ad^*$ by
$$
(\ad^*_\omega \mu,\xi)=(\mu,\ad_\omega \xi)=(\mu,[\omega,\xi]),
\quad \mu\in\g^*, \, \omega,\xi\in \g.
$$

Let $f_h=h\circ\Phi\in{\mathcal F}_1$ and let $\mu_0=\Phi(x_0)$. If
\begin{eqnarray} \label{invariant}
\ad^*_{dh(\mu_0)}\mu_0=0
&\Longleftrightarrow & dh(\mu_0)\in\ann T_{\mu_0}{\mathcal O}(\mu_0)
\nonumber\\
&\Longleftrightarrow & \{f,h\}_{\g^*}(\mu_0)=0, \, f\in C^{\infty}(\g^*),
\end{eqnarray}
then by
(\ref{2.4}) and (\ref{2.5}) we get that $f_h$ commutes
with all functions from ${\mathcal F}_2$ at $x_0$.

By the proof of Theorem 4.1, for $x_0\in\reg({\mathcal F}_1+{\mathcal F}_2)$,
the condition (\ref{invariant}) implies
that $df_h(x_0)$ belongs to the $\Span\{df(x_0),\;f\in{\mathcal F}_2\}$.
Thus we have:
\begin{eqnarray*}
\ddim({\mathcal F}_1+{\mathcal F}_2)
&=&\dim M-\dim G\cdot x+\dim {\mathcal O}(\mu) \\
&=&\dim M+\dim G_x-\dim G_\mu \\
\dind({\mathcal F}_1+{\mathcal F}_2)
&=&\dim G_\mu-\dim G_x,
\end{eqnarray*}
for general $x\in M$, $\mu=\Phi(x)$
($G_\mu$ and $G_x$ denotes the isotropy groups of $ G$ action at $\mu$ and $x$).

Note that if $h:\mathfrak g^*\to\mathbb{R}$ is an invariant of the co-adjoint action,
then (\ref{invariant}) hold on $\g^*$
and $h\circ\Phi$ will be a $G$-invariant function on $M$.

\begin{rem}
Similar construction can be performed on dual pairs (see Pana\-syuk \cite{P}).
Recall, the dual pair is given by two Poisson mappings
\[
N_1\;\overset{\Phi_1}{\longleftarrow}\;M\;\overset{\Phi_2}{\longrightarrow}\;N_2
\]
such that all level-sets of $\Phi_1$ and $\Phi_2$ are regular
and symplectically orthogonal in the intersection points \cite{Wa}.
Then the algebra $\FF_1+\FF_2$ is complete on $M$,
where $\FF_i=\Phi_i^*(C^\infty(N_i))$, $i=1,2$.
\end{rem}

\begin{dfn} \cite{GS}
The Hamiltonian $H: M\to \mathbb{R}$ is said to be \emph{collective}
if $H$ is of the form $H=h\circ\Phi$.
\end{dfn}

The solutions of the Hamiltonian equation $\dot x=X_{h\circ\Phi}$ are of the form
$$
x(t)=g(t)\cdot x_0,
$$
where $g(t)$ are solutions of the kinematic equation $\dot g=dh(\mu(t))\cdot g(t)$
and $\mu(t)$ are solutions of the Euler equations (\ref{Euler}) (see \cite{GS}).

For example, if $M=T^*G$, the geodesic flow of the \emph{right-invariant} metric on $G$
is described by the collective Hamiltonian with respect to the natural \emph{left} $G$-action.

\begin{cor}
Suppose that assumption 4.1 holds.
Let $h:\mathfrak g^*\to\mathbb{R}$ be a Hamiltonian function such that the Euler equations:
\begin{equation} \label{Euler}
\dot \mu=-\ad^*_{dh(\mu)}\mu \; \Longleftrightarrow \;
\dot f=\{f,h\}_{\mathfrak g^*}, \quad f\in C^{\infty}(\g^*),
\end{equation}
are completely integrable on general co-adjoint orbits ${\mathcal O}(\mu)\subset\Phi(M)$
with a set of Lie--Poisson commuting integrals $f_i$, $i=1,\dots,\frac12\dim\mathcal O(\mu)$.
Then the Hamiltonian equations on $M$ with the collective Hamiltonian function $h\circ\Phi$
are completely integrable (in the non-commutative sense).
The complete set of first integrals is
$$
\{f_i\circ\Phi\mid i=1,\dots,\tfrac12\dim\mathcal O(\mu)\}+{\mathcal F}_2.
$$
\end{cor}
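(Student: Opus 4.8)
The plan is to read this corollary off as a direct application of Theorem \ref{Bol_Jov}(ii), so that the essential content reduces to the completeness already established there. First I would let $\mathcal A$ be the functional algebra on $\g^*$ generated by the given integrals $f_1,\dots,f_k$, $k=\frac12\dim\mathcal O(\mu)$. By hypothesis these Lie--Poisson commute and are functionally independent on a generic coadjoint orbit, so the restriction of $\mathcal A$ to such an orbit is commutative with $\ddim\mathcal A=\dind\mathcal A=k$; hence $\ddim\mathcal A+\dind\mathcal A=2k=\dim\mathcal O(\mu)$, which is precisely the completeness condition (\ref{CAS}) applied to the symplectic orbit $\mathcal O(\mu)$ in place of $M$. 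Theorem \ref{Bol_Jov}(ii) then yields at once that $\Phi^*\mathcal A+{\mathcal F}_2$ is a complete algebra on $M$.

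The remaining step is to confirm that this complete algebra consists of integrals of the collective flow $\dot x=X_{h\circ\Phi}$, so that it witnesses noncommutative complete integrability in the sense of the definition (it possesses a complete algebra of integrals). For the $G$-invariant part I would invoke Noether's theorem in the form $\{{\mathcal F}_1,{\mathcal F}_2\}=0$ recorded above: since $h\circ\Phi\in{\mathcal F}_1$, every $f\in{\mathcal F}_2$ Poisson-commutes with it. For the pulled-back part I would use the Poisson--morphism identity (\ref{2.4}) to write $\{f_i\circ\Phi,h\circ\Phi\}(x)=\{f_i,h\}_{\g^*}(\Phi(x))$, which vanishes because each $f_i$ is an integral of the Euler equations (\ref{Euler}) on the orbits foliating $\Phi(M)$ and the Lie--Poisson bracket is tangent to those orbits. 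The same identity (\ref{2.4}), the closedness of ${\mathcal F}_2$, and the vanishing of the cross brackets $\{{\mathcal F}_1,{\mathcal F}_2\}=0$ together show that $\Phi^*\mathcal A+{\mathcal F}_2$ is itself closed under the Poisson bracket, hence a genuine algebra of first integrals.

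I expect the one genuinely delicate point to be the first step: converting the phrase ``completely integrable on general coadjoint orbits'' into the precise assertion that $\mathcal A$ is complete on the generic orbit. Here one must use that the count $k=\frac12\dim\mathcal O(\mu)$ of independent commuting integrals is exactly what forces the differential dimension and differential index of a commutative algebra on a symplectic leaf to sum to $\dim\mathcal O(\mu)$, and that the required independence holds only on an open dense subset of the orbit rather than at every point. Once this matching of definitions is secured, the rest is bookkeeping built on Theorem \ref{Bol_Jov} and the Noether relation $\{{\mathcal F}_1,{\mathcal F}_2\}=0$.
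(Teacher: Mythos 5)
Your proposal is correct and coincides with the route the paper intends: the corollary is stated as an immediate consequence of Theorem \ref{Bol_Jov}(ii), and your argument—identifying the commuting integrals $f_i$ as a complete commutative algebra $\mathcal A$ on the generic orbit (since $\ddim\mathcal A+\dind\mathcal A=2k=\dim\mathcal O(\mu)$), invoking Theorem \ref{Bol_Jov}(ii) for completeness of $\Phi^*\mathcal A+\mathcal F_2$, and then checking the first-integral property via the Noether relation $\{\mathcal F_1,\mathcal F_2\}=0$ together with the Poisson-morphism identity \eqref{2.4}—is exactly the filling-in of the details the paper leaves implicit. No gaps.
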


The above theorem for the \emph{multiplicity-free} actions
is given by Guillemin and Sternberg (see \cite{GS1, GS2, GS}).

If $G$ is a compact group, then the connected components of regular invariant submanifolds
are isotropic tori of dimension $\frac12(\dim G+\dim G_\mu)-\dim G_x$.

\subsection{Geodesic Flows on Homogeneous Spaces of Compact Lie Groups}
The classical example is the geodesic flow of a left-invariant metric on the Lie group $SO(3)$.
The geodesic flow of such a metric describes the motion of a rigid body
about a fixed point under its own inertia.
This problem was solved by Euler (e.g., see \cite{Ar, AKN}.

In general, the geodesic flow of a left-invariant metrics on a Lie group $G$
after $G$-reduction reduces to the $\mathfrak g^*=(T^*G)/G$.
The reduced bracket of the canonical Poisson bracket on $T^*G$
are the Lie--Poisson bracket (\ref{2.5}) (multiplied by $-1$) \cite{Ar}.
Therefore the reduced system is described by Euler equations (\ref{Euler})
(multiplied by $-1$), where now $h$ is the reduced Hamiltonian.

A multidimensional generalization of the Euler case to $so(n)$
has been suggested by Manakov \cite{Ma}.
Using his idea, Mishchenko and Fomenko proposed the argument shift method (see below)
and constructed integrable examples of Euler equations for all compact groups \cite{MF1}
and proved the integrability of the original geodesic flows \cite{MF2, MF3}.
There are many other important constructions on various Lie algebras
(e.g., see \cite{TF, Bo, RS}).

Thimm \cite{Th} and Mishchenko \cite{Mis} proved the complete integrability
of geodesic flows of normal metrics on compact symmetric spaces $G/H$.
By a \emph{normal} $G$-invariant Riemannian metric
on the homogeneous space $G/H$ of a compact group $G$,
we mean the submersion metric induced from a bi-invatiant metric on $G$.

These results are generalized to the class of homogeneous spaces $G/H$
on which all $G$-invariant Hamiltonian systems are integrable
by means of Noether integrals (see \cite{GS1, GS2, Mik3}).
In those cases $(G,H)$ is a spherical pair and $G/H$
is a weekly symmetric space (see \cite{Vi} and references therein).
Examples of homogeneous, non (weekly) symmetric spaces $G/H$
with integrable geodesic flows are given in \cite{Th, PS, MS}.
It appears that many of those examples can be considered together
within the framework of non-commutative integrability.

Let $G$ be a compact connected Lie group with the Lie algebra $\mathfrak g=T_e G$.
Let us fix some bi-invariant metric $ds^2_0$ on $G$,
i.e., $\Ad_G$-invariant scalar product $\langle\cdot,\cdot\rangle$ on $\mathfrak g$.
We can identify $\mathfrak g^*$ and $\mathfrak g$ by $\langle\cdot,\cdot\rangle$.

Consider an arbitrary homogeneous space $G/H$ of the Lie group $G$.
The metric $ds^2_0$ induces so called \emph{normal metric}
(or \emph{standard metric}) on $G/H$.
We shall denote the normal metric also by $ds^2_0$.
By the use of $ds^2_0$ we identify $T^*G\cong TG$ and $T^*(G/H)\cong T(G/H)$.
Let $\mathfrak h$ be the Lie algebra of $H$
and $\mathfrak g=\mathfrak h+\mathfrak v$ the orthogonal decomposition.
Then $\mathfrak v$ can be naturally identified with $T_{\rho(e)}(G/H)$
and $T^*_{\rho(e)}(G/H)$, where $\rho:G\to G/H$ is the canonical projection.

The momentum mapping $\Phi:T^*(G/H)\to\mathfrak g$
of the natural $G$-action on $T^*(G/H)$ is given by $\Phi(g\cdot\eta)=\Ad_g(\eta)$.
where $\eta\in\mathfrak v$, \;$g\cdot\eta\in T^*_{\rho(g)}(G/H)$.

The Hamiltonian function of the geodesic flow of the normal metric $ds^2_0$
is simply given by
\begin{equation} \label{normal}
h_0=\frac 12 \langle \Phi,\Phi \rangle.
\end{equation}

Since $\langle\xi,\xi\rangle$ is $\Ad_G$-invariant,
$h_0$ Poisson commute with $\mathcal{F}_1$ and $\mathcal{F}_2$.
Note that for $\mathcal{F}_1$ and $\mathcal{F}_2$
we can take analytic functions, polynomial in momenta.
Thus we get

\begin{thm} \cite{BJ1}
Let $G$ be a compact Lie group.
The geodesic flows of the normal metrics on the homogeneous spaces $G/H$
are completely integrable in the non-commutative sense
by means of analytic functions, polynomial in momenta.
\end{thm}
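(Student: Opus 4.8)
The plan is to recognize the geodesic flow of the normal metric as the Hamiltonian flow of a \emph{collective} function attached to a Casimir of $\g^*$, and then to read off non-commutative integrability directly from Theorem~\ref{Bol_Jov}(i). First I would take $M=T^*(G/H)$ with its natural Hamiltonian $G$-action and moment map $\Phi(g\cdot\eta)=\Ad_g\eta$. Since $G$ is compact the action is proper, so all orbits are separated by invariant functions and Assumption~4.1 holds; hence $\FF_1+\FF_2$ is complete by Theorem~\ref{Bol_Jov}(i). Next I observe that $h_0=\frac12\langle\Phi,\Phi\rangle$ is collective, namely $h_0=h\circ\Phi$ with $h(\mu)=\frac12\langle\mu,\mu\rangle$. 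Because $\langle\cdot,\cdot\rangle$ is $\Ad_G$-invariant, $h$ is a Casimir of the Lie--Poisson bracket on $\g^*$, and at the same time $h_0=\frac12\langle\eta,\eta\rangle$ depends only on the body momentum $\eta$, so it is $G$-invariant; thus $h_0$ lies in $\FF_1\cap\FF_2$.

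Then I would verify that $h_0$ Poisson-commutes with all of $\FF_1+\FF_2$. It commutes with $\FF_2$ because $h_0\in\FF_1$ and $\{\FF_1,\FF_2\}=0$ (Noether's theorem), and it commutes with $\FF_1$ because $\{h_0,f\circ\Phi\}=\{h,f\}_{\g^*}\circ\Phi=0$ by (\ref{2.4}) together with the Casimir property $\{h,\cdot\}_{\g^*}=0$. Consequently $\FF_1+\FF_2$ is a complete algebra of integrals of $X_{h_0}$, and Definition~2.2 immediately yields complete integrability in the non-commutative sense.

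It remains to arrange that such a complete algebra can be chosen inside the analytic functions that are polynomial in the momenta, and this I expect to be the only delicate point. For $\FF_1$ one restricts to $\Phi^*\R[\g^*]$; since $\Phi$ is linear in the momenta these functions are polynomial in momenta and analytic, and already the linear functions $(\Phi,\xi)$, $\xi\in\g$, span the differential space of $\FF_1$ at every point. For $\FF_2$ I would use the body-frame picture: a $G$-invariant function of $g\cdot\eta$ depends only on $\eta\in\mathfrak{v}$ and is $\Ad_H$-invariant there, so a finite system of $H$-invariant polynomials on $\mathfrak{v}$ (which exists because $H$ is compact, by Schwarz's theorem \cite{[29]}) provides analytic invariants, polynomial in momenta, whose differentials span $\ann(T_x(G\cdot x))$ at generic $x$. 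Since each summand then realizes the same generic differential space $F_x$ as its smooth counterpart, the refined algebra $\Phi^*\R[\g^*]+\FF_2^{\mathrm{an}}$ has the same $\ddim$ and $\dind$, is therefore still complete, and contains $h_0$ as the $\Phi^*$ of the quadratic Casimir. This completes the argument, the main obstacle being the verification that restricting to analytic polynomial-in-momenta functions does not shrink the generic spaces $F_x$.
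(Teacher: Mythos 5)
Your proposal is correct and takes essentially the same route as the paper: there, too, the argument is that $h_0=\frac12\langle\Phi,\Phi\rangle$ commutes with both $\FF_1$ and $\FF_2$ because $\langle\cdot,\cdot\rangle$ is $\Ad_G$-invariant, that Assumption 4.1 holds for the compact group $G$ so Theorem \ref{Bol_Jov}(i) gives completeness of $\FF_1+\FF_2$, and that $\FF_1$ and $\FF_2$ can be taken to consist of analytic functions polynomial in momenta. Your elaboration of this last point (linear functions $(\Phi,\xi)$ for $\FF_1$, and $\Ad_H$-invariant polynomials on $\mathfrak{v}$ via Schwarz's theorem for $\FF_2$) simply fills in a detail the paper states without proof.
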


Let $\mathbb{R}[\mathfrak g]^G$ be the algebra
of $\Ad_G$-invariant polynomials on $\mathfrak g$.
The polynomials
$$
{\mathcal A}_a=\{p^\lambda_a=p(\cdot+\lambda a)\mid
\lambda\in\mathbb{R},\;p\in\mathbb{R}[\mathfrak g]^G\}
$$
obtained from the invariants by shifting the argument are all in involution \cite{MF1}.
Furthermore, for \emph{every} adjoint orbit in $\mathfrak g$, one can find $a\in \mathfrak g$,
such that ${A}_a$ is a complete involutive set of functions on this orbit.
For regular orbits it is proved in \cite{MF1}.
For singular orbits there are several different proofs, see \cite{Mik1, Br3, Bo}.
(Note that $\Phi(T^*(G/H))$ can be often a subset of singular set in $\mathfrak g^*$.)

According item (ii) of theorem \ref{Bol_Jov},
$\Phi^*\mathcal A_a+\mathcal F_2$ is a complete algebra on $T^*(G/H)$.
Let $h_C(\xi)=\frac12\langle C(\xi),\xi\rangle$ be a quadratic positive definite polynomial
in $\mathcal A_a$ (operators $C:\g\to\g$ are described in \cite{MF1}).
Then $h_C\circ\Phi$ is the Hamiltonian of the geodesic flow of a certain metric
that we shall denote by $ds_{C}^2$.
The metric $ds^2_{C}$ has the following nice geometrical meaning.
This is the submersion metric of the right-invariant Riemannian metric on $G$
whose Hamiltonian function is obtained from $h_C(\xi)$ by right translations.
Thus some integrable (in non-commutative sense) deformations of the normal metric
always exist \cite{BJ1, BJ2}.
This construction for symmetric spaces is done by Brailov \cite{Br2, Br3}.

Whence, to construct a complete commutative algebra of analytic integrals
of the geodesic flow of the normal metric we need to solve the following nontrivial problem:
to find a complete commutative subalgebra $\mathcal B$
of $G$-invariant functions on $T^*(G/H)$.

For symmetric and weekly symmetric spaces (spherical pairs) the algebra $\mathcal{F}_2$
is already commutative and for the integrability we need only Noether's integrals $\FF_1$.
Spherical pairs for $G$ simple and semisimple
are classified in \cite{Kr} and \cite{Mik3}, respectively.

The examples of Thimm \cite{Th} ($(SO(n),SO(n-2))$) and Paternain and Spatzier \cite{PS}
($(SU(3),\mathbb{T}^2)$) are \emph{almost spherical pairs},
that are the homogeneous spaces where for a complete commutative set of functions on $T^*(G/H)$
we can take an arbitrary complete commutative subalgebra $\mathcal A\subset\mathcal F_1$
and one $G$-invariant function functionally independent of $\mathcal{F}_1$ \cite{MS, Mik4}.
Almost spherical pairs are classified, for $G$ compact and semisimple in \cite{Pa, MS}.

There are two natural methods for constructing commutative families of $G$-invariant functions,
namely the shift-argument method and chain of subalgebras method, see \cite{BJ1, BJ3}.
In many examples (Stiefel manifolds, flag manifolds,
orbits of the adjoint actions of compact Lie groups etc.)
those methods lead to complete commutative algebras
(see Bolsinov and Jovanovi\'c \cite{BJ1, BJ3}, Buldaeva \cite{Bul},
Mykytyuk and Panasyuk \cite{MP}).

\begin{rem}
We also note that integrable potential systems on symmetric spaces and coadjoint orbits
can be found, e.g., in \cite{Sa} and \cite{BJ5}, respectively.
Then one can easily construct, by means of the classical Maupertuis principle,
corresponding integrable geodesic flows that are not $G$ invariant (e.g., see \cite{BF}).
\end{rem}

\subsection{ Magnetic Geodesic Flows}
Similar construction is used for the magnetic geodesic flows of the normal metrics
on a class of homogeneous spaces,
in particular (co)adjoint orbits of compact Lie groups \cite{Ef1, Ef2, BJ4, BJ5}.

The geodesic flow on $(Q,\kappa)$ can be interpreted as the inertial motion
of a particle on $Q$ with the kinetic energy given by (\ref{ham}) (we take $v\equiv 0$).
The motion of the particle under the influence of the additional magnetic field
given by a closed 2-form
$$
\Omega=\sum_{1 \leq i<j \leq n} F_{ij}(q) dq^i \wedge dq^j,
$$
is described by the following equations:
\begin{equation} \label{magnetic_flow}
\frac{dq^i}{dt}=\frac{\partial h}{\partial p_i}, \qquad
\frac{dp_i}{dt}=-\frac{\partial h}{\partial q^i}+\sum_{j=1}^{n}F_{ij}
\frac{\partial h}{\partial p_j}.
\end{equation}

The equations (\ref{magnetic_flow}) are Hamiltonian
with respect to the ``twisted" symplectic form $\omega+\sigma^*\Omega$,
where $\sigma: T^*Q\to Q$ is the natural projection.
Namely, the new Poisson bracket is given by
\begin{equation} \label{magnetic_bracket}
\{f,g\}_\Omega=\{f,g\}
+\sum_{i,j=1}^{n} F_{ij} \frac{\partial f}{\partial p_i}\frac{\partial g}{\partial p_j},
\end{equation}
and the Hamiltonian equations $\dot f=\{f,h\}_\Omega$ read (\ref{magnetic_flow}).
Here $\{\cdot,\cdot\}$ are canonical Poisson bracket (\ref{CPB}).

The flow (\ref{magnetic_flow}) is called \emph{magnetic geodesic flow}
on the Riemannian manifold $(Q,\kappa)$ with respect to the magnetic field $\Omega$.
For simplicity, we shall refer to (\ref{magnetic_bracket})
as a \emph{magnetic Poisson bracket}
and to $(T^*Q,\omega+\sigma^*\Omega)$ as a \emph{magnetic cotangent bundle}.

We introduce a class of homogeneous spaces $G/H$
having a natural construction of the magnetic term, consisting of pairs $(G,H)$,
where $H$ have one-point adjoint orbits.
Let $a\in\mathfrak h$ be the $H$-adjoint invariant.
Then $H$ is a subgroup of the $G$-adjoint isotropy group $G_a$ of $a$.
The adjoint orbit $\mathcal O(a)$ through $a$
carries the Kirillov--Konstant symplectic form $\Omega_{KK}$ (e.g., see \cite{GS}).
Then we have canonical submersion of homogeneous spaces
$
\tau: G/H \to G/G_a\cong \mathcal O(a)
$
and the closed two-form $\Omega=\tau^*\Omega_{KK}$
gives us the required magnetic field on $G/H$.

The form $\Omega$ is $G$-invariant.
From the definition of $\Omega_{KK}$ one can easily prove
that at the point $\rho(e)$, $\Omega$ is given by
\begin{equation} \label{magnetic_form}
\Omega(\xi_1,\xi_2)|_{\rho(e)}=-\langle a,[\xi_1,\xi_2]\rangle, \quad
\xi_1,\xi_2\in\mathfrak v \cong T_{\rho(e)}(G/H)
\end{equation}

\begin{lem}
The momentum mapping $\Phi_\epsilon:T^*(G/H)\to\mathfrak g$
of the natural $G$-action on $T^*(G/H)$
with respect to the symplectic form $\omega+\epsilon \sigma^*\Omega$ is given by
$$
\Phi_\epsilon(g\cdot\eta)=\Phi_0(g\cdot\eta)+\epsilon \Ad_g(a),
$$
where $\eta\in\mathfrak v$, \,$g\cdot\eta \in T^*_{\pi(g)}(G/H)$.
\end{lem}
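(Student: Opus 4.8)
The plan is to use the linearity of the defining relation for a momentum map in the symplectic form, splitting the computation over the two summands $\omega$ and $\epsilon\sigma^*\Omega$. The $G$-action on $T^*(G/H)$ is the cotangent lift of the $G$-action on $G/H$, so it preserves the canonical form $\omega$; since $\Omega$ is $G$-invariant, it also preserves $\sigma^*\Omega$, and hence acts in a Hamiltonian fashion with respect to $\omega+\epsilon\sigma^*\Omega$. Writing $\phi_\xi^\epsilon=(\Phi_\epsilon,\xi)$ for $\xi\in\g$ and letting $\xi_M$ denote the fundamental vector field of $\xi$ on $T^*(G/H)$, the defining relation (with the sign fixed by the convention (\ref{one})) is $d\phi_\xi^\epsilon=\iota_{\xi_M}(\omega+\epsilon\sigma^*\Omega)$. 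Because $X\mapsto\iota_{\xi_M}X$ is linear, momentum maps add when the forms add, so it suffices to show $\Phi_\epsilon=\Phi_0+\epsilon\Psi$, where $\Psi$ is an equivariant momentum map for the presymplectic form $\sigma^*\Omega$, and to identify $\Psi(g\cdot\eta)=\Ad_g(a)$; the summand $\Phi_0(g\cdot\eta)=\Ad_g(\eta)$ is already known.

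First I would treat the magnetic summand. Since $\sigma:T^*(G/H)\to G/H$ is $G$-equivariant, its differential maps $\xi_M$ to the fundamental vector field $\xi_{G/H}$ on the base, whence $\iota_{\xi_M}\sigma^*\Omega=\sigma^*(\iota_{\xi_{G/H}}\Omega)$. To find a potential for the $1$-form $\iota_{\xi_{G/H}}\Omega$ I would use $\Omega=\tau^*\Omega_{KK}$ together with the $G$-equivariance of the submersion $\tau:G/H\to G/G_a\cong\mathcal{O}(a)$: the field $\xi_{G/H}$ is $\tau$-related to the fundamental field $\xi_{\mathcal{O}}$ on the orbit, so $\iota_{\xi_{G/H}}\Omega=\tau^*(\iota_{\xi_{\mathcal{O}}}\Omega_{KK})$. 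Now the momentum map of the $G$-action on the coadjoint orbit $(\mathcal{O}(a),\Omega_{KK})$ is the inclusion $\mathcal{O}(a)\hookrightarrow\g$, so $\iota_{\xi_{\mathcal{O}}}\Omega_{KK}=d\,j^\xi$ with $j^\xi(\nu)=(\nu,\xi)$. Setting $\psi^\xi=\tau^* j^\xi$, so that $\psi^\xi(gH)=(\Ad_g(a),\xi)$, we obtain $\iota_{\xi_{G/H}}\Omega=d\psi^\xi$ and, after lifting, $\iota_{\xi_M}\sigma^*\Omega=d(\sigma^*\psi^\xi)$. Hence $(\Psi(g\cdot\eta),\xi)=(\Ad_g(a),\xi)$ for every $\xi\in\g$, and therefore $\Psi(g\cdot\eta)=\Ad_g(a)$.

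The one substantive point to verify is that the candidate potential $\psi^\xi(gH)=(\Ad_g(a),\xi)$ is genuinely well defined on $G/H$ rather than only on $G$. This is exactly where the standing hypothesis that $a\in\mathfrak{h}$ is an $H$-fixed adjoint vector enters: since $H\subset G_a$, one has $\Ad_h(a)=a$ for all $h\in H$, hence $\Ad_{gh}(a)=\Ad_g(a)$ and $\psi^\xi$ descends to the quotient; this is the same condition that makes the map $\tau$ and the identification $G/G_a\cong\mathcal{O}(a)$ well defined in the first place. Combining the two summands yields $\Phi_\epsilon(g\cdot\eta)=\Phi_0(g\cdot\eta)+\epsilon\Ad_g(a)$, which is the assertion.

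I expect the main obstacle to be bookkeeping of the sign and normalization conventions rather than any conceptual difficulty: one must reconcile the sign in $\Omega(\xi_1,\xi_2)|_{\rho(e)}=-\langle a,[\xi_1,\xi_2]\rangle$ from (\ref{magnetic_form}) with the momentum-map convention of (\ref{one})--(\ref{com}) and with the chosen normalization of the Kirillov--Konstant form. A convenient way to bypass the equivariance argument altogether, should the signs prove delicate, is to verify the formula only at the base point $\rho(e)$ using (\ref{magnetic_form}) directly, and then to propagate it by the $G$-equivariance of $\Phi_\epsilon$ (valid because $\omega+\epsilon\sigma^*\Omega$ is $G$-invariant), which forces the $\Ad_g$-covariance of the right-hand side.
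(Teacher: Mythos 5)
The paper contains no proof of this lemma for you to be compared against: it is stated bare in the survey's Section~4.3, with the proof residing in the cited reference of Bolsinov and Jovanovi\'c on magnetic flows on homogeneous spaces. Judged on its own, your argument is correct, complete, and is the natural one: the fundamental vector field $\xi_M$ depends only on the action and not on the symplectic form, so generating functions add over the two summands of $\omega+\epsilon\sigma^*\Omega$; the magnetic summand is computed by pushing down along the equivariant maps $\sigma$ and $\tau$ to the orbit $(\mathcal{O}(a),\Omega_{KK})$, whose momentum map is (up to normalization) the inclusion; and the well-definedness of $gH\mapsto(\Ad_g(a),\xi)$ is precisely the standing hypothesis $H\subset G_a$, i.e., that $a$ is an $H$-adjoint invariant.

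The one loose end you leave, the sign, does close with the paper's conventions, and it is worth recording the check. The paper's convention (Section~1.1) is $dh(\cdot)=\omega(\cdot,X_h)$, and \eqref{magnetic_form} pins down the normalization of $\Omega_{KK}$: since $d(\tau\circ\rho)|_e(\xi)=[\xi,a]$, formula \eqref{magnetic_form} says $\Omega_{KK}\bigl([\xi_1,\nu],[\xi_2,\nu]\bigr)=-\langle\nu,[\xi_1,\xi_2]\rangle$ on the orbit. Then for $j^\xi(\nu)=\langle\nu,\xi\rangle$ one gets $dj^\xi\bigl([\eta,\nu]\bigr)=\langle[\eta,\nu],\xi\rangle$, while $\Omega_{KK}\bigl([\eta,\nu],[\xi,\nu]\bigr)=-\langle\nu,[\eta,\xi]\rangle=-\langle[\nu,\eta],\xi\rangle=\langle[\eta,\nu],\xi\rangle$ by $\ad$-invariance of $\langle\cdot,\cdot\rangle$; hence $dj^\xi(\cdot)=\Omega_{KK}(\cdot,\xi_{\mathcal{O}})$, the orbit momentum map is $+$ the inclusion in the paper's convention, and the relative sign in $\Phi_\epsilon=\Phi_0+\epsilon\Ad_g(a)$ is as stated. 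A last small caveat: your proposed fallback (verify at $\rho(e)$ and propagate by equivariance of $\Phi_\epsilon$) is mildly circular, since a momentum map of an invariant form need not be equivariant a priori; the equivariance, equivalently the bracket condition \eqref{com}, is instead read off from the explicit formula your main argument produces, so the fallback should be regarded as a consistency check rather than an independent derivation.
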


Now, let $\mathcal F_1^\epsilon$ be the algebra of all analytic,
polynomial in momenta, functions of the form
$
\mathcal F_1^\epsilon=\{p\circ\Phi_\epsilon, \, p\in \R[\mathfrak g]\}
$
and $\mathcal F_2$ be the algebra of all analytic, polynomial in momenta,
$G$-invariant functions on $T^*(G/H)$.  Then
$
\{\mathcal F_1^\epsilon,\mathcal F_2\}_\epsilon=0,
$
where $\{\cdot,\cdot\}_\epsilon$ are magnetic Poisson bracket
with respect to $\omega+\epsilon\sigma^*\Omega$.

Consider the Hamiltonian
$h_\epsilon=\frac12\langle\Phi_\epsilon,\Phi_\epsilon \rangle\in \mathcal F_1^\epsilon$.
We have
$$
h_\epsilon (g\cdot\eta)
=\frac12\langle\Ad_g\eta,\Ad_g\eta\rangle+\epsilon\langle \Ad_g \eta,\Ad_ga\rangle
+\frac{\epsilon^2}{2}\langle \Ad_g a,\Ad_g a \rangle=h_0(g\cdot\eta)+\operatorname{const},
$$
where we used that $\eta\in\mathfrak v$ is orthogonal to $a\in\mathfrak h$
and $h_0$ is the Hamiltonian of the normal metric (\ref{normal}).
Thus, the Hamiltonian flows of $h_0$ and $h_\epsilon$ coincides.
Since $h_{\epsilon}$ belongs to $\mathcal F_1^\epsilon$ its commutes with $\mathcal F_2$.
On the other side, as a composition of the momentum mapping with an invariant polynomial,
the function $h_\epsilon$ is also $G$-invariant and commutes with $\mathcal F_1^\epsilon$.

\begin{thm} [Bolsinov and Jovanovi\'c \cite{BJ5}]
The magnetic geodesic flow of the normal metrics $ds^2_0$ on the homogeneous space $G/H$
with respect to the $G$-invariant closed $2$-form $\epsilon\Omega$ given at $\rho(e)$
by \eqref{magnetic_form} is completely integrable in the non-commutative sense.
The complete algebra of first integrals is $\mathcal F_1^\epsilon+\mathcal F_2$.
\end{thm}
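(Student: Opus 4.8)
The plan is to view the magnetic geodesic flow as an ordinary Hamiltonian $G$-system on the twisted symplectic manifold $(M,\omega_\epsilon)$, $M=T^*(G/H)$, $\omega_\epsilon=\omega+\epsilon\sigma^*\Omega$, and then to apply the structural Theorem \ref{Bol_Jov}. The integrals are already at hand. The computation preceding the statement gives $h_\epsilon=h_0+\mathrm{const}$, so the Hamiltonian $h_0$ of the magnetic geodesic flow differs from $h_\epsilon$ by a constant, hence generates the same flow and lies in $\mathcal F_1^\epsilon+\mathcal F_2$. Moreover $h_\epsilon$ is simultaneously collective (it belongs to $\mathcal F_1^\epsilon$) and $G$-invariant (it belongs to $\mathcal F_2$), so together with $\{\mathcal F_1^\epsilon,\mathcal F_2\}_\epsilon=0$ it Poisson-commutes with the whole of $\mathcal F_1^\epsilon+\mathcal F_2$; consequently every function of this algebra is a first integral of the flow. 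Both $\mathcal F_1^\epsilon$ (because $\Phi_\epsilon$ is a Poisson map onto $(\g^*,\{\cdot,\cdot\}_{\g^*})$) and $\mathcal F_2$ (because the action is Hamiltonian for $\omega_\epsilon$) are closed under $\{\cdot,\cdot\}_\epsilon$, so $\mathcal F_1^\epsilon+\mathcal F_2$ is a Poisson-closed algebra of integrals. The entire content of the theorem therefore reduces to showing that this algebra is \emph{complete}, i.e., that $\ddim(\mathcal F_1^\epsilon+\mathcal F_2)+\dind(\mathcal F_1^\epsilon+\mathcal F_2)=\dim M$.

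To obtain completeness I would apply Theorem \ref{Bol_Jov}(i) verbatim, but on $(M,\omega_\epsilon)$ in place of $(M,\omega)$. Three facts make this legitimate. First, $\omega_\epsilon$ is symplectic: it is closed because $d\sigma^*\Omega=\sigma^*d\Omega=0$, and nondegenerate because $\sigma^*\Omega$ annihilates the cotangent fibres and therefore cannot spoil the pairing between base and fibre directions of $\omega$. Second, by the preceding Lemma $\Phi_\epsilon$ is exactly the momentum mapping of the $G$-action with respect to $\omega_\epsilon$; consequently the only structural property of the momentum map used in the proof of Theorem \ref{Bol_Jov}(i), namely the orthogonality identity \eqref{2.8}, takes the form $\ker d\Phi_\epsilon(x)=(T_x(G\cdot x))^{\omega_\epsilon}$. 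Third, the algebra $\mathcal F_2$ of $G$-invariant polynomials in momenta does not depend on $\epsilon$ (the cotangent-lifted $G$-action is the same for every value of $\epsilon$), so Assumption 4.1 holds for exactly the same reason as in the normal-metric case and $\Span\{df(x)\mid f\in\mathcal F_2\}=\ann(T_x(G\cdot x))$ on a dense set.

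Granting these, the isotropy argument of Theorem \ref{Bol_Jov}(i) carries over unchanged: for generic $x$ put $D_1^\epsilon=\ker d\Phi_\epsilon(x)$ and $D_2=T_x(G\cdot x)$, so that $D_1^\epsilon=D_2^{\omega_\epsilon}$ by the identity above; then $(D_1^\epsilon\cap D_2)^{\omega_\epsilon}=(D_1^\epsilon)^{\omega_\epsilon}+D_2^{\omega_\epsilon}=D_2+D_1^\epsilon\supset D_1^\epsilon\cap D_2$, which says that $D_1^\epsilon\cap D_2$ is $\omega_\epsilon$-isotropic, equivalently (Remark \ref{SNI}) that $\mathcal F_1^\epsilon+\mathcal F_2$ is complete. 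The main point requiring care — and the only place where the polynomial (rather than smooth) nature of $\mathcal F_1^\epsilon$ enters — is the identity $\Span\{df(x)\mid f\in\mathcal F_1^\epsilon\}=\ann(\ker d\Phi_\epsilon(x))$ that yields $D_1^\epsilon=\ker d\Phi_\epsilon(x)$: this holds because already the linear functions on $\g$ have differentials spanning $T^*_{\Phi_\epsilon(x)}\g\cong\g^*$, so pulling back by $\Phi_\epsilon$ produces the full annihilator of $\ker d\Phi_\epsilon(x)$ and the differential dimensions coincide with those of the smooth pulled-back algebra used in Theorem \ref{Bol_Jov}. Since $\mathcal F_1^\epsilon+\mathcal F_2$ is then a complete algebra of integrals containing the Hamiltonian, the magnetic geodesic flow is completely integrable in the non-commutative sense, with complete algebra $\mathcal F_1^\epsilon+\mathcal F_2$.
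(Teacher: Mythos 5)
Your proposal is correct and follows essentially the same route as the paper: the paper's argument consists precisely of Lemma 4.1 (identifying $\Phi_\epsilon$ as the momentum map for the twisted form $\omega+\epsilon\sigma^*\Omega$), the computation $h_\epsilon=h_0+\mathrm{const}$ showing the flows coincide and that $h_\epsilon$ commutes with both $\mathcal F_1^\epsilon$ and $\mathcal F_2$, and then an application of Theorem \ref{Bol_Jov}(i) to the magnetic cotangent bundle, which is exactly your completeness argument. Your additional verifications (nondegeneracy of $\omega_\epsilon$, independence of $\mathcal F_2$ and of Assumption 4.1 from $\epsilon$, and the fact that polynomials already span the annihilator of $\ker d\Phi_\epsilon$) are details the paper leaves implicit, filled in correctly.
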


\section{Reductions and Integrability}

\subsection{Reconstruction}
Suppose the reduced flow (\ref{reduced-flow}) is completely integrable
(in the non-commutative sense) with a complete algebra of first integrals $\FF$.
Then $\FF$ can be considered as a complete algebra
$$
\mathcal B=\pi^*\FF
$$
of $\FF_2=C^\infty(M)^G$.
According Theorem 4.1 we get the following theorem, explicitly given in \cite{Zu}.

\begin{thm} [Zung \cite{Zu}] \label{M-F-Z}
The integrability of the reduced flow \eqref{reduced-flow}
implies the integrability of the original system \eqref{hamiltonian}.
\end{thm}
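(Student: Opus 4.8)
The plan is to transport the complete algebra of the reduced system up to $M$ through the projection $\pi\colon M\to M/G$ and then feed it into Theorem~\ref{Bol_Jov}. By hypothesis the reduced flow \eqref{reduced-flow} is completely integrable in the non-commutative sense, so on the Poisson manifold $(M/G,\{\cdot,\cdot\}^G)$ there is a complete algebra of first integrals $\FF$, meaning $\ddim_{M/G}\FF+\dind_{M/G}\FF=\dim(M/G)+\corank\{\cdot,\cdot\}^G$. First I would set $\mathcal B=\pi^*\FF$; since every $F\in\FF$ pulls back to a $G$-invariant function on $M$, $\mathcal B$ is a subalgebra of $\FF_2=C^\infty(M)^G$. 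The goal of the first step is to show that $\mathcal B$ is in fact a \emph{complete} subalgebra of $\FF_2$ in the sense of Definition~2.3, after which Theorem~\ref{Bol_Jov}(iii) will finish the argument.

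The heart of the matter is the dimension count in this first step, carried out on the open dense regular set where $\pi$ is a submersion and the generic orbits have maximal dimension $m$. There $\ker d\pi_x=T_x(G\cdot x)$, so the codifferential $(d\pi_x)^*$ injects $T^*_{\pi(x)}(M/G)$ onto $\ann T_x(G\cdot x)\subset T^*_xM$ and carries $\Span\{dF(\pi(x))\mid F\in\FF\}$ onto $\Span\{df(x)\mid f\in\mathcal B\}$; hence $\ddim\mathcal B=\ddim_{M/G}\FF$. Because $\pi$ is a Poisson morphism, $\{f_1,f_2\}(x)=\{F_1,F_2\}^G(\pi(x))$, so under this identification the Poisson form restricted to the span of $d\mathcal B$ coincides with the reduced Poisson form restricted to the span of $d\FF$, giving $\dind\mathcal B=\dind_{M/G}\FF$ as well. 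Since $\FF_2=\pi^*C^\infty(M/G)$ is the pullback of the full function algebra on $M/G$, one has $\ddim\FF_2+\dind\FF_2=\dim(M/G)+\corank\{\cdot,\cdot\}^G$, which by completeness of $\FF$ equals $\ddim\mathcal B+\dind\mathcal B$; thus $\mathcal B$ is a complete subalgebra of $\FF_2$.

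With this in hand I would take $\mathcal A=C^\infty(\g^*)$, whose restriction to a generic co-adjoint orbit $\mathcal O(\mu)\subset\Phi(M)$ is the full, hence complete, algebra on that symplectic orbit, so that $\Phi^*\mathcal A=\FF_1$. Theorem~\ref{Bol_Jov}(iii) then yields that $\FF_1+\mathcal B$ is a complete algebra on $M$. It remains to check that the original Hamiltonian $h=H\circ\pi$ commutes with it: $h$ is $G$-invariant, so $\{h,\FF_1\}=0$ by Noether's theorem ($\{\FF_1,\FF_2\}=0$), while for $f=F\circ\pi\in\mathcal B$ one has $\{h,f\}=\{H,F\}^G\circ\pi=0$, because the elements of $\FF$ are first integrals of the reduced flow $\dot F=\{F,H\}^G$. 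Hence $\FF_1+\mathcal B$ is a complete algebra of integrals of $X_h$, and \eqref{hamiltonian} is completely integrable in the non-commutative sense. The main obstacle lies entirely in the second paragraph: the genericity is delicate because $M/G$ may be singular and the action need not be free, so the equalities of $\ddim$ and $\dind$ must be established on a common open dense set of regular points, where $\pi$ is a submersion with vertical kernel; away from that set the counting can fail.
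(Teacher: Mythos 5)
Your proof is correct and takes essentially the same route as the paper: the paper's argument is exactly to regard $\mathcal B=\pi^*\FF$ as a complete subalgebra of $\FF_2=C^\infty(M)^G$ and then invoke Theorem~\ref{Bol_Jov}. Your dimension count for $\ddim\mathcal B$, $\dind\mathcal B$, the choice $\mathcal A=C^\infty(\g^*)$, and the check that $h$ commutes with $\FF_1+\mathcal B$ simply supply the details the paper leaves implicit.
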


In the case of geodesic flows of left-invariant metric,
Theorem \ref{M-F-Z} is proved by Mishchenko and Fomenko \cite{MF2, MF3}.

\subsection{Poisson Reduction}
We have the following simple general observation (see e.g.~\cite{Jo1}).

\begin{lem}
Suppose a connected Lie group $G$ acts effectively on a symplectic manifold $(M,\omega)$.
Let $h$ be a $G$-invariant Hamiltonian function.

\emph{(i)} If the Hamiltonian system \eqref{hamiltonian} is
completely integrable by means of $G$-invariant first integrals,
then $G$ is commutative. Moreover, if the action is free, the
reduced Hamiltonian system on $N_\eta$ is completely integrable
for a generic value $\eta$ of the moment map, i.e., the reduced
flow \eqref{reduced-flow} on the Poisson manifold $M/G$ is
completely integrable.

\emph{(ii)} If the system \eqref{hamiltonian} is $\T^n$-dense completely integrable,
then the Lie group $G$ is commutative.
\end{lem}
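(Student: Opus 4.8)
The plan is to derive both parts from a single observation. Assuming, as the setting requires, that the $G$-action is Hamiltonian with equivariant moment map $\Phi$, each component $\phi_\xi=(\Phi,\xi)$, $\xi\in\g$, is a first integral of \eqref{hamiltonian}: since $h$ is $G$-invariant, Noether's theorem gives $\{\phi_\xi,h\}=0$. I will show that under either hypothesis all the $\phi_\xi$ pairwise Poisson-commute, and then read off commutativity of $G$ from the structure relation \eqref{com}, $\{\phi_{\xi_1},\phi_{\xi_2}\}=\phi_{[\xi_1,\xi_2]}$.

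For (i), $G$-invariance of the complete commuting integrals $f_1,\dots,f_n$ means $\{f_i,\phi_\xi\}=\xi_M(f_i)=0$ for all $i$ and all $\xi$, where $\xi_M=X_{\phi_\xi}$ is the generator of $\xi$. On $\reg\mathcal F$ the fields $X_{f_1},\dots,X_{f_n}$ span a Lagrangian subspace $L_x\subset T_xM$, and the vanishing of $\{f_i,\phi_\xi\}=\omega(X_{\phi_\xi},X_{f_i})$ forces $X_{\phi_\xi}\in L_x^\omega=L_x$, i.e. $d\phi_\xi\in\Span\{df_1(x),\dots,df_n(x)\}$. Hence each $\phi_\xi$ is locally a function of $f_1,\dots,f_n$, so $\{\phi_\xi,\phi_\eta\}=0$ for all $\xi,\eta$. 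By \eqref{com} this gives $\phi_{[\xi,\eta]}\equiv 0$ on the dense set, hence everywhere, so the generator $[\xi,\eta]_M=X_{\phi_{[\xi,\eta]}}$ vanishes; effectiveness and connectedness of $G$ then yield $[\xi,\eta]=0$, so $\g$ is abelian and $G$ is commutative.

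For the reduction statement in (i), commutativity makes the coadjoint action trivial, so $\Phi$ is $G$-invariant, $G_\eta=G$ and $N_\eta=\Phi^{-1}(\eta)/G$. I would fix a generic regular value $\eta$ (so that $\reg\mathcal F$ meets $M_\eta=\Phi^{-1}(\eta)$ in an open dense set) and a basis $\xi_1,\dots,\xi_k$ of $\g$, $k=\dim G$. Freeness makes $d\Phi$ of rank $k$, so $\phi_{\xi_1},\dots,\phi_{\xi_k}$ are independent; since all of $f_1,\dots,f_n,\phi_{\xi_1},\dots,\phi_{\xi_k}$ are $G$-invariant and commute and have $\ddim=n$, I choose $G$-invariant $g_1,\dots,g_{n-k}$ completing the $\phi_{\xi_i}$ to an independent commuting set. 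Restricted to $M_\eta$, where the $\phi_{\xi_i}$ are constant, the $g_j$ stay $G$-invariant and descend to $G_1,\dots,G_{n-k}$ on $N_\eta$; Poisson reduction keeps them in involution, and since each $dg_j$ annihilates the orbit directions while $dg_1|_{T_xM_\eta},\dots,dg_{n-k}|_{T_xM_\eta}$ remain independent, the $G_j$ are independent on an open dense subset of $N_\eta$. As $\dim N_\eta=2n-2k$, these $n-k$ functions form a complete commuting set, giving complete integrability of \eqref{reduced-flow} on the generic symplectic leaf of $M/G$.

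For (ii) I would instead use the quoted fact that any smooth first integral of a $\T^n$-dense system is a function of the action variables $I_1,\dots,I_n$ alone; as these commute, any two first integrals commute. Applying this to $\phi_\xi,\phi_\eta$ gives $\{\phi_\xi,\phi_\eta\}=0$, hence $\phi_{[\xi,\eta]}=0$ and $[\xi,\eta]=0$ by effectiveness, so $G$ is commutative. The commutativity arguments are short; the main obstacle is the bookkeeping in the reduction of (i), namely verifying that precisely $n-k$ of the $G$-invariant integrals survive as functionally independent, Poisson-commuting functions on the $(2n-2k)$-dimensional quotient, which is where the genericity of $\eta$ and the covector count are essential.
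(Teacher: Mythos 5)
Your proof is correct in substance, and parts of it coincide with the paper's, but the two arguments are organized differently. For the commutativity claims the core idea is the same: $G$-invariance of the integrals plus completeness forces the orbit directions to be isotropic, so $\{\phi_{\xi_1},\phi_{\xi_2}\}=\phi_{[\xi_1,\xi_2]}$ vanishes on a dense set, hence everywhere, and effectiveness kills $[\g,\g]$; your part (ii) is essentially verbatim the paper's. The difference in part (i) is one of generality and of mechanics. The paper does not assume a commuting set $f_1,\dots,f_n$: it takes an arbitrary complete $G$-invariant algebra $\FF$ (so the hypothesis also covers integrability in the \emph{noncommutative} sense) and uses the criterion of Remark \ref{SNI} — invariance gives $T_x(G\cdot x)\subset D_x$, and completeness makes $D_x$ isotropic — where your Lagrangian span $L_x$ is the special commutative case of $D_x$. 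For the reduction statement the routes genuinely diverge: the paper never selects individual functions, but pushes the whole algebra down, observing that $W_x=\{X_f(x)\mid f\in\FF\}\subset T_x\Phi^{-1}(\eta)$ and that $d\pi_\eta(W_x)$ remains coisotropic, so the induced algebra $\FF_\eta$ is complete at $\pi_\eta(x)$; you instead build an explicit commuting set by adjoining $n-k$ invariant integrals to $\phi_{\xi_1},\dots,\phi_{\xi_k}$ and descending them. Your version buys a concrete complete commutative set of $n-k$ functions on the $(2n-2k)$-dimensional quotient, but it pays for this with exactly the bookkeeping you flag: a fixed choice of $g_1,\dots,g_{n-k}$ from among functions of the $f_j$ is guaranteed independent only on an open (not necessarily dense) subset of $M_\eta$, so strictly speaking one must either vary the choice over a cover and glue (as in Theorem 2.2) or, as the paper does, phrase completeness at the level of the induced algebra, which is a pointwise-generic condition and needs no patching.
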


\begin{proof}
(i) Let $\FF$ be a complete $G$-invariant algebra of integrals
and let $D_x$ and $W_x$ be the subspaces of $T_x M$ defined in Remark 2.1.
Since all functions from $\FF$ are $G$-invariant
we have that $T_x(G\cdot x)\subset D_x$ is isotropic, for generic $x\in M$.
The Hamiltonian vector fields of the functions (\ref{one})
generate one-parameter groups of symplectomorphisms.
Therefore $\omega_x(X_{\phi_{\xi_1}}(x),X_{\phi_{\xi_2}}(x))$ vanishes
in an open dense set of $M$.
Thus, according to (\ref{com})
$$
\phi_{[\xi_1,\xi_2]}=\{\phi_{\xi_1},\phi_{\xi_2}\}=0
$$
for every $\xi_1,\xi_2 \in \g$.
Since the action is effective, we get that $G$ is commutative.

Let the Abelian group $G$ act freely on $M$.
Take $x\in M$ such that $\FF$ is complete at $x$.
Since all functions from $\FF$ are $G$-invariant,
the coisotropic space $W_x=\{X_f(x)\mid f\in\FF\}$
belongs to $T_x \Phi^{-1}(\eta)$, $\eta=\Phi(x)$.

Let $\FF_\eta=\{F_\eta, f\in\FF\}$ be the induced algebra of integrals on $N_\eta$.
From the definition of the reduced symplectic structure, one can easily see that the space
$$
W_x^\eta=\{X_{F_\eta}(\pi_\eta(x)) \mid F\in\FF_\eta\}=d\pi_\eta(W_x)
$$
is also coisotropic, i.e., the induced algebra $\FF_\eta$ is complete at $\pi_\eta(x)$.

\smallskip
(ii) If the system (\ref{hamiltonian}) is $\T^n$-dense completely integrable,
then in every toroidal domain the functions $\phi_\xi(x)$, $\xi\in\g$
depend only on the action variables and so
$\{\phi_{\xi_1},\phi_{\xi_2}\}=\phi_{[\xi_1,\xi_2]}$ vanishes on $\reg M$.
Therefore $[\g,\g]=0$ and the Lie group $G$ is commutative.
\end{proof}

From Lemma 5.1, the condition on a $G$-invariant algebra $\FF$ on $(M,\omega)$
to be complete forces $G$ to be abelian, which is too restrictive.
However, we can consider the case when we have integrability of (\ref{hamiltonian})
with non-invariant integrals.
Also, note that by item (ii) the existence of the non-Abelian group of symmetries
is closely related to the non-commutative integrability of the Hamiltonian flow.

\begin{dfn} \cite{Zu}
We shall say that the Hamiltonian system (\ref{hamiltonian})
is \emph{completely integrable by means of an algebra of all integrals}
if the family of all integrals of the system form a complete algebra.
\end{dfn}

It is clear that when $\FF$ is a complete algebra of integrals, $\dind\FF=r$,
the connected components of regular invariant manifolds are compact
and the system is $\T^r$-dense within $\reg M$,
then the system is integrable by means of an algebra of all integrals.

\begin{thm} [Zung \cite{Zu}]
If $G$ is compact and the Hamiltonian system \eqref{hamiltonian}
is completely integrable by means of an algebra of all integrals,
then the reduced system \eqref{reduced-flow} on $M/G$
will be integrable by means of an algebra of all integrals.
\end{thm}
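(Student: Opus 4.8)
The plan is to reduce the statement to the trajectory-closure description of integrability ``by an algebra of all integrals'' and then transport that structure across the projection $\pi\colon M\to M/G$. First, I would identify the algebra of all integrals of the reduced flow \eqref{reduced-flow} with the $G$-invariant integrals upstairs: a function $F$ on $M/G$ satisfies $\{F,H\}^G=0$ if and only if its pull-back $f=F\circ\pi$ is a $G$-invariant integral of \eqref{hamiltonian}. Writing $\FF$ for the (complete) algebra of all integrals on $M$ and $\FF^G=\FF\cap C^\infty(M)^G$ for the invariant ones, I note that, since $h$ is $G$-invariant, the flow of $X_h$ commutes with the $G$-action; hence $\FF$ is $G$-invariant as a set and averaging over the compact group $G$ projects $\FF$ onto $\FF^G$.

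Second, I would record the structural fact behind the phrase ``all integrals''. Because $\FF$ is complete with $\dind\FF=r$, Theorem 2.1 produces regular invariant tori $T^r$ carrying a linear flow. Were the frequencies generically resonant, Nekhoroshev's generalized action-angle variables together with Bogoyavlenski's Theorem 2.4 would freeze the extra angles and yield additional smooth integrals; these already belong to $\FF$, so they would force $\dind\FF<r$, a contradiction. Thus the generic trajectory is dense on its torus $T^r$, so $T^r$ is exactly the common regular level set of all integrals and the system is $\T^r$-dense on it.

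Third---the heart of the argument---I would push this picture down, working over a regular value $\mu$ of $\Phi$ and on the principal orbit stratum, so that the symplectic leaf $N_\mu=\Phi^{-1}(\mu)/G_\mu$ is a manifold. The projection $\pi(\gamma)$ of a dense trajectory $\gamma\subset T^r$ is a trajectory of the reduced flow, and here compactness of $G$ is essential: the action is proper and $\pi$ is closed, $G\cdot T^r$ is compact hence closed, and $\overline{G\cdot\gamma}=G\cdot T^r$, so $\overline{\pi(\gamma)}=\pi\bigl(\overline{G\cdot\gamma}\bigr)=\pi(T^r)=:T^{r'}$; being the image of a torus under the quotient, $T^{r'}$ is again a torus, and the reduced trajectory is dense on it. From $\pi_\mu^*\omega_\mu=\omega|_{\Phi^{-1}(\mu)}$ and the isotropy of $T^r$ in $(M,\omega)$ I would deduce that $\omega_\mu$ vanishes on $T^{r'}$, so $T^{r'}$ is isotropic in $N_\mu$. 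Since $\FF^G$ is $G$-invariant, its common level set through a generic $x$ is $G\cdot T^r$ (invariant integrals separate distinct $G$-orbits of tori), whence $T^{r'}$ is precisely the regular common level set of all reduced integrals. An isotropic regular level set of the algebra of all integrals is equivalent to completeness of that algebra by the criterion of Remark \ref{SNI}; together with the $\T^{r'}$-density just established, this gives integrability by an algebra of all integrals on each leaf $N_\mu$, and completeness on generic symplectic leaves is completeness on the Poisson manifold $M/G$.

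The main obstacle I anticipate is precisely the density transfer $\overline{\pi(\gamma)}=\pi(T^r)$ and the identification of $T^{r'}$ as the full reduced level set: both rest on compactness of $G$ (properness of the action, Hausdorffness of $M/G$ on the regular stratum, closedness of orbits, and separation of orbit types), which is where the hypothesis ``$G$ compact'' enters essentially. Keeping the entire construction inside a single open dense set of regular points---regular values of $\Phi$, principal orbits, and regular points of both $\FF$ and $\FF^G$---is the routine but delicate bookkeeping that remains.
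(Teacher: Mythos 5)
Your route is genuinely different from the paper's: the paper never looks at the dynamics on the invariant sets at all, but runs a purely algebraic dimension count --- it averages the integrals over the compact group $G$, bounds the loss of differential dimension by $\ddim\FF_h-\ddim\FF_H\leq\dim G-s$ (where $s$ is the generic dimension of the intersection of a level set with a $G$-orbit), observes that the kernel functions $h_1,\dots,h_r$ of the algebra of all integrals are automatically $G$-invariant (they commute with the Noether integrals $\phi_\xi$), so that their projections $H_i$ lie in the kernel of the reduced algebra and lose exactly $s$ dimensions, and finally adds the $\rank G$ Casimirs of $\{\cdot,\cdot\}^G$ to reach $\ddim\FF_H+\dind\FF_H\geq\dim M/G+\corank\{\cdot,\cdot\}^G$. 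That argument requires neither tori, nor dense trajectories, nor compactness of level sets. Your dynamical argument could be pushed through, but as written it has genuine gaps precisely at its two load-bearing steps.

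First, the inference that $T^{r'}=\pi(T^r)$ ``is again a torus, being the image of a torus under the quotient'' is not valid: images of tori under quotient maps are in general orbifolds or worse (e.g.\ $\T^2/\mathbb{Z}_2$ can be a sphere). What is true here, and what must be proved, is that the identifications induced on $T^r$ come from the closed subgroup $\Gamma=\{g\in G_\mu \mid g\cdot T^r=T^r\}$ (one checks $g\cdot T^r$ is again a fiber of the isotropic fibration, so it meets $T^r$ only if it equals $T^r$), and that $\Gamma$ acts on $T^r$ \emph{by translations}: each $g\in\Gamma$ commutes with the flow of $X_h$, hence agrees with a translation on the dense orbit $\gamma$, hence everywhere; only then is $T^r/\Gamma$ a torus, and a manifold at all. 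Second, the parenthetical ``invariant integrals separate distinct $G$-orbits of tori'' is asserted, but it is the heart of the theorem: it is exactly what bounds $\ddim$ of the reduced algebra from below, i.e.\ it plays the role of the paper's inequality $\ddim\FF_h-\ddim\FF_H\leq\dim G-s$. Without it you cannot identify $T^{r'}$ with the \emph{regular} common level set of all reduced integrals; the annihilator $D_y$ of the reduced differentials could be strictly larger than $T_yT^{r'}$, and then the isotropy you establish for $T^{r'}$ says nothing about $D_y$, so the completeness criterion of Remark 2.1 does not apply. Proving separation needs a Schwarz-type argument: near a generic point the tori form a fibration whose local base carries all integrals (via bump-function extensions), $G$ maps tori to tori and hence acts on that base, and invariant functions of a compact group action separate orbits --- this is exactly the ``delicate bookkeeping'' you defer, and it is not routine. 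Finally, note that your Step 2 presupposes that the regular common level sets of $\FF$ are compact (otherwise Theorem 2.1(ii), Nekhoroshev's Theorem 2.3 and Bogoyavlenski's Theorem 2.4, which you invoke, are unavailable); this hypothesis appears nowhere in the statement, and the paper's counting proof does not need it.
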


\begin{proof}
For the completeness in the exposition, we shall present the proof given in \cite{Zu}
slightly modified and adopted for our notation.

Let $\FF_h$ be the algebra of all integrals of the system
(\ref{hamiltonian}). Without loss of generality it can be assumed
that there exist functions $f_1,\dots,f_l,h_1,\dots,h_r\in \FF_h$
($l=\dind\FF_h$, $r=\dind\FF_h$) such that $\{\FF_h,h_i\}=0$,
$i=1,\dots,r$ and $df_1\wedge\dots\wedge df_l\neq 0$ and
$dh_1\wedge \dots \wedge dh_r\neq 0$ at a generic point in $M$.

As in Section 3, by capital letters we denote the functions on $M/G$
and by small letters corresponding $G$-invariant functions on $M$ ($f=\pi^*(F)$,
where $\pi: M\to M/G$ is the natural projection).

Let $s$ be the generic dimension of the intersection
of a common level set of $f_i$, $i=1,\dots,l$ with an orbit of $G$ in $M$.
The set of all integrals $\FF_H$ of the flow of the reduced Hamiltonian $H$
can be obtained from the functions in $\FF_h$ by averaging with respect to the $G$-action.
Therefore
\begin{equation} \label{Z1}
l-\ddim \FF_H=\ddim\FF_h-\ddim\FF_H \leq \dim G-s.
\end{equation}

An important observation is that the functions $h_i$ are $G$-invariant.
Since $h$ is $G$ invariant, the functions (\ref{one}) are first integrals,
i.e., $\phi_\xi\in \FF_h$, \;$\xi\in\g$.
This implies $\{\phi_\xi, h_i\}=0$, $i=1,\dots,r$, \;$\xi\in\g$,
which means that $h_i$ are $G$-invariant.  Whence
\begin{multline} \label{Z2}
r-\dim\Span\{X_{H_1},\dots,X_{H_r}\}|_{\pi(x)}\\
=\dim\Span\{X_{h_1},\dots,X_{h_r}\}|_{x}-\dim\Span\{X_{H_1},\dots,X_{H_r}\}|_{\pi(x)}=s,
\end{multline}
for generic $x\in M$.
Here $X_{h_i}$ and $X_{H_i}$ are Hamiltonian vector fields on $M$ and $M/G$, respectively.

Further, the functions $H_1,\dots,H_r$ belongs to the kernel of $\FF_H$.
Indeed if $F\in \FF_H$, then $f=\pi^*(F)\in \FF_h$ and
$
\{h_i,f\}=\{H_i,F\}^G=0.
$

Taking into account $\rank G$-Casimir functions of the Poisson bracket $\{\cdot,\cdot\}^G$,
from (\ref{Z2}) we get
\begin{equation} \label{Z3}
\dind\FF_H \geq r-s+\rank G.
\end{equation}

Finally, from (\ref{Z1}) and (\ref{Z3}) it follows
$$
\ddim \FF_H+\dind\FF_H \geq l+s-\dim G+r-s+\rank G=\dim M/G+\corank \{\cdot,\cdot\}^G
$$
implying the completeness of $\FF_H$.
\end{proof}

\subsection{Symplectic Reduction}
Suppose we are given a $G$-invariant natural mechanical system $(Q,\kappa,v)$
with a completely integrable flow.
The natural question arises: will the reduced natural system on $Q/G$ be integrable?

This approach is used in the construction of new manifolds
with completely integrable geodesic flows starting from the integrable geodesic flows
with symmetries.
Paternain and Spatzier proved that
if the manifold $Q$ has geodesic flow integrable by means of $S^1$-invariant integrals
and if $N$ is a surface of revolution,
then the submersion geodesic flow on $Q\times_{S^1} N=(Q\times N)/S^1$
will be completely integrable \cite{PS}.
Combining submersions and Thimm's method (see \cite{Th}), Paternain and Spatzier \cite{PS}
and Bazaikin \cite{Baz} proved integrability of geodesic flows
on certain interesting bi-quotients of Lie groups.
We shall explain these submersion examples in the framework of a general construction.

Note that, if $G$ is not Abelian,
$T^*(Q/G)$ is the singular symplectic leaf in $(T^*Q)/G$
so we can not directly apply Theorem 5.1.

Let $G$ be a compact connected Lie group.
Suppose we are given an integrable $G$-invariant Hamiltonian system $\dot x=X_H(x)$
with compact iso-energy levels $M_h=H^{-1}(h)$.
Then $M$ is foliated by invariant tori in an open dense set that we shall denote by $\reg M$.
Suppose $0$ is the regular value of the momentum mapping.

\begin{thm}
If $\reg M$ intersects the submanifold $\Phi^{-1}(0)$ in a dense set,
then the reduced Hamiltonian system \eqref{reduced-eta} on $(N_0,\omega_0)$
will be completely integrable.
\end{thm}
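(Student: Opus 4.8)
The plan is to transport the toric foliation of $\reg M$ down to $N_0=\Phi^{-1}(0)/G$ and to read off completeness from a symplectic linear-algebra computation at a regular point. First I would let $\FF$ denote the algebra of all integrals of the $G$-invariant Hamiltonian $H$. Since $H$ is $G$-invariant, Noether's theorem places the momentum-map components $\phi_\xi$ into $\FF$, and the invariance of $H$ lets me average any integral over the compact group $G$ to produce $G$-invariant integrals, whose restrictions to $\Phi^{-1}(0)$ descend to integrals of the reduced Hamiltonian $H_0$ on $N_0$; call this reduced algebra $\FF_0$. Because the components of $\Phi$ belong to $\FF$, they are constant on every invariant torus $T^r$ of $\reg M$; hence each such torus meeting $\Phi^{-1}(0)$ lies entirely inside $\Phi^{-1}(0)$, and its image under the projection $\pi_0$ is an invariant set of the reduced flow.

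Next I would fix $x\in\reg M\cap\Phi^{-1}(0)$, which the hypothesis makes dense in $\Phi^{-1}(0)$, and work in $T_xM$ with $V_x=T_x(G\cdot x)$ and the spaces $W_x=\{X_f(x)\mid f\in\FF\}$, $D_x=W_x^\omega$ of Remark \ref{SNI}. The original integrability makes $W_x$ coisotropic and $D_x$ the isotropic tangent to the torus; the Noether integrals give $V_x\subset W_x$; and at the zero level the orbit is isotropic, $V_x\subset V_x^\omega=\ker d\Phi(x)$ by \eqref{2.8}, whence also $D_x=W_x^\omega\subset V_x^\omega$. Passing to $T_{\bar x}N_0=V_x^\omega/V_x$ with the reduced form $\omega_0(\bar X,\bar Y)=\omega(X,Y)$, I would show that $\bar W=(W_x\cap V_x^\omega)/V_x$ is coisotropic: using $(A\cap B)^\omega=A^\omega+B^\omega$ and $(V_x^\omega)^\omega=V_x$, one computes $\bar W^{\omega_0}=((D_x+V_x)\cap V_x^\omega)/V_x=(D_x+V_x)/V_x$, and since $D_x+V_x\subset W_x\cap V_x^\omega$ this is contained in $\bar W$. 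Thus $\bar W^{\omega_0}\subset\bar W$, so by Remark \ref{SNI} the algebra $\FF_0$ is complete at $\bar x$, and $\pi_0(T^r)$ is an isotropic, compact, connected — hence toral — invariant manifold of the reduced quasi-periodic flow, of dimension $\dind\FF_0$.

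Finally, since regular points fill $\Phi^{-1}(0)$ densely, their images fill $N_0$ densely, so $\FF_0$ is complete on a dense subset of $N_0$; that is, the reduced system is completely integrable in the noncommutative sense, and by Theorem 2.2 it also admits a complete commutative set. The main obstacle I anticipate lies in the phrase \emph{descend to enough integrals}: I must guarantee that the $G$-averaged invariant integrals genuinely realize the coisotropic space $\bar W$ on $N_0$ rather than collapsing, for $N_0=M_{\OO_0}/G$ is the singular leaf $\OO=\{0\}$ of $M/G$ at which the restriction of a complete Poisson algebra can degenerate — this is exactly why the Poisson-reduction theorem above does not apply directly. The density hypothesis that $\reg M\cap\Phi^{-1}(0)$ be dense is precisely what prevents this collapse, keeping $N_0$ out of the singular set where the reduced integrals degenerate; checking that averaging over the compact group $G$ preserves enough functional independence at these regular points, so that $\bar W$ is actually spanned by symplectic gradients of reduced integrals, is the delicate step on which I would spend most of the effort.
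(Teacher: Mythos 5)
Your symplectic linear algebra is correct, and it is exactly dual to the computation in the paper's proof: the paper shows directly that $d\pi_0\bigl(T_x(G\cdot x)+T_x\mathcal T\bigr)=(D_x+V_x)/V_x$ is isotropic in $T_{\pi_0(x)}N_0$, while you show that its skew-orthogonal $\bar W=(W_x\cap V_x^\omega)/V_x$ is coisotropic; these are the same statement, checked through Remark \ref{SNI}. But the theorem is not a statement about subspaces of tangent spaces: it asserts the existence of a complete algebra of first integrals of the reduced system, and your proposal never produces those functions. You define $\FF_0$ by $G$-averaging the integrals of the original system and then need that the Hamiltonian vector fields of $\FF_0$ actually span $\bar W$ at a dense set of points --- the step you yourself defer as ``delicate''. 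This is the entire content of the theorem, and averaging does not obviously deliver it: the differential of $\bar f(x)=\int_G f(g\cdot x)\,dg$ involves $df$ at all points of the orbit $G\cdot x$, so there is no pointwise relation between $\Span\{d\bar f(x)\}$ and $W_x\cap V_x^\omega$; averages of independent non-invariant integrals can become dependent or even constant. Moreover the density hypothesis does not ``prevent this collapse'' --- its role in the paper is only to guarantee that the domains where the construction works project onto a dense subset of $N_0$.

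What the paper does instead is manufacture the reduced integrals locally, with no averaging. It takes toroidal domains $\OO\subset\reg M$ meeting $\Phi^{-1}(0)$ in which the flow is $\T^r$-dense (such domains fill $\reg M$ densely by Theorem 2.4). There every integral --- in particular every momentum component $\phi_\xi$ --- is a function of $(I,p,q)$ alone, so the $G$-action permutes the invariant tori, and the saturations $G\cdot(\mathrm{torus})$ form a foliation $\mathcal L$ of $\UU_1=(G\cdot\OO)\cap\Phi^{-1}(0)$ with $T_x\mathcal L=T_x(G\cdot x)+T_x\mathcal T$. (This $\T^r$-density is not cosmetic: in a resonant domain the $\phi_\xi$ need not be constant on the tori of a chosen fibration, so $G$ need not preserve it and $\mathcal L$ need not be a foliation; this also undercuts your pointwise argument, which takes $\FF$ to be \emph{all} integrals.) Locally $\mathcal L$ is cut out by functions $f_i^{\OO_1}(I,p,q)$; these are extended $G$-invariantly to $\UU_1$ and pushed down to $N_0$, and the isotropy computation then shows the resulting algebra $\FF_0$ is complete on $\pi_0(\UU_1)$, with $\ddim\FF_0=\dim N_0+s-r$, $\dind\FF_0=r-s$. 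Since this gives completeness only domain-by-domain (with tori of varying dimension), the paper finishes with the Brailov-type gluing of Theorem 2.2 --- cutoffs $h_\alpha$ times local action-type integrals --- to obtain a globally defined complete commutative set of smooth integrals on $N_0$. Your closing appeal to Theorem 2.2, like your inference ``isotropic, compact, connected, hence toral'' (which requires Theorem 2.1 applied to a complete algebra, not isotropy alone), is available only after the completeness of $\FF_0$ --- precisely the missing construction --- has been established.
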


\begin{proof}
Take some toroidal domain $\OO=\T^r\{\varphi\}\times
B_\sigma\{I,p,q\} \subset \reg M$ such that
$\OO_1=\OO\cap\Phi^{-1}(0)\neq\emptyset$ and that the Hamiltonian
flow (\ref{hamiltonian}) is $\T^r$-dense in $\OO$. Consider the
$G$-invariant sets
$
\UU_1=\UU\cap \Phi^{-1}(0), \;
\UU=G\cdot \OO=\{g\cdot x\mid x\in \OO,\; g\in G\}.
$

Note that the functions (\ref{one}) are first integrals of the Hamiltonian flow
and so do not depend on $\varphi$ in $\OO$.
In other words, the action of $G$ preserves the foliation
by $r$-dimensional invariant isotropic tori of the sets $\UU$ and $\UU_1$ as well.

The foliation $\mathcal T$ of $\UU_1$ by tori induces a foliation $\mathcal L$ of $\UU_1$
by $(\dim G+r-s)$-dimensional $G$-invariant submanifolds, with tangent spaces of the form
$$
T_x\mathcal L=T_x(G\cdot x)+T_x(\mathcal T)\subset T_x\Phi^{-1}(0),
$$
where $s=\dim S_x$, \;
$
S_x=T_x(G\cdot x) \cap T_x(\mathcal T), \; x\in \UU_1
$
(for $x\in \OO_1$ we have that
$$
T_x\mathcal L=\{X_{\phi_a}(x)\mid X_{I_i}(x)\}, \quad
S_x=\{X_{\phi_a}(x)\} \cap \{X_{I_i}(x)\}
$$
and the foliation $\mathcal L|_{\OO_1}$ does not depend on
$\varphi$).

Let $\pi_0: \Phi^{-1}(0)\to \Phi^{-1}(0)/G=N_0$ be the natural projection
and let $\tilde \UU=\pi_0(\UU_1)\subset N_0$.
The foliation $\mathcal L$ induces a foliation $\tilde{\mathcal T}=\pi_0(\mathcal L)$
of $\tilde \UU$ by an $(r-s)$-dimensional invariant manifolds
of the geodesic flow of the submersion metric.
We shall see below that $\tilde{\mathcal T}$ is a foliation of $\tilde \UU$
by an invariant tori with respect to certain complete algebra of first integrals $\FF_0$.

The foliation $\mathcal L$ can be seen as the level sets of
$G$-invariant integrals $f_1,\dots,f_\rho$ on $\UU_1$, $\rho=\dim
M-\dim G+s-r$. Indeed, this is always true locally: for $\sigma$
small enough there are functions $f_i^{\OO_1}=f_i^{\OO_1}(I,p,q)$
on $\OO_1$ such that the tangent spaces $T_x\mathcal L$ (recall
that $\mathcal L|_{\OO_1}$ does not depend on $\varphi$) are given
by the equations
$$
T_x\mathcal L=\{\xi\in T_x \OO_1 \mid  df_i^{\OO_1}(x)(\xi)=0,\;i=1,\dots,\rho\}.
$$
Then functions $f_i$ are $G$-invariant extensions of $f_i^{\OO_1}$ to $\UU_1$,
$i=1,\dots,\rho$.

Let $\FF_0$ be the induced algebra of first integrals in $\tilde\UU$.
Since
$$
D_{\pi_0(x)}=\{\xi\in T_{\pi_0(x)} \tilde U \mid df_0(\pi_0(x))(\xi)=0,\; f_0\in\FF_0\}
=T_{\pi_0(x)}\tilde{\mathcal T}=d\pi_0(x)(T_x\mathcal L),
$$
$x\in \UU_1$ are isotropic spaces, by Remark 1 we get that $\FF_0$ is complete in $\tilde \UU$.
Note that
$
\ddim\FF_0=\dim N_0+s-r, \; \dind\FF_0=r-s.
$

Now fill up $N_0$ with countably many disjoint torodial domains
$$
\tilde{\OO}_\alpha
=\T^{r(\alpha)}\{\tilde\varphi\} \times B_{\sigma(\alpha)}\{\tilde I,\tilde p,\tilde q\}
$$
(with possible different dimensions of tori).
As in Theorem 2.2, in every domain $\tilde{\OO}_\alpha$,
one can construct complete involutive set of integrals that can be then ``glued"
in order to obtain a complete involutive set of integrals globally defined.

We have the following $m=\frac12\dim N_0$ commuting integrals in $\tilde{\OO}_\alpha$:
$$
h_1=\tilde{I}_1^2,\dots,h_r=\tilde{I}_r^2,\;\;
h_{r+1}=\tilde{p}_1^2+\tilde{q}_1^2,\dots,\;h_m=\tilde{p}_k^2+\tilde{q}_k^2.
$$

Let $g_\alpha:\mathbb{R}\to\mathbb{R}$ be a smooth nonnegative function
such that $g_\alpha(x)$ is equal to zero for $|x|>\sigma(\alpha)$,
$g_\alpha$ monotonically increases on $[-\sigma(\alpha),0]$
and monotonically decreases on $[0,\sigma(\alpha)]$.
Let $h_\alpha(y)=g_\alpha(h_1(y)+\dots+h_m(y))$.
This function can be extended by zero to the whole manifold $N_0$.
Then $f_i^\alpha=h_\alpha\cdot h_i$, $i=1,\dots,n$ will be commuting functions,
independent on an open dense subset of $\tilde{\OO}_\alpha$.
With a ``good" choice of $g_\alpha$'s,
a complete commutative set of smooth integrals is given by $f_i(y)=f_i^\alpha(y)$
for $y \in \tilde{\OO}_\alpha\subset\bigcup_\beta \tilde{\OO}_\beta$
and zero otherwise, $i=1,\dots,m$.
\end{proof}

Note that we do not suppose that integrals of the original system are $G$-invariant.
Also, the general construction used in the proof of the theorem
leads to smooth commuting integrals on $M_0$.

\begin{rem}
It is clear that a similar statement holds for an arbitrary value of the momentum mapping:
if $\reg M$ intersects the submanifold $\Phi^{-1}(\eta)$ in a dense set,
then the reduced Hamiltonian system on $(N_\eta,\omega_\eta)$ will be completely integrable.
Also one can get the following modification of Theorem 5.2:
Suppose we are given an integrable Hamiltonian system (\ref{hamiltonian})
invariant with respect to the action of a compact group $G$
and with compact iso-energy levels $M_h=H^{-1}(h)$.
Then the reduced system (\ref{reduced-flow}) on $M/G$ will be completely integrable.
\end{rem}

\begin{cor} \label{submersion} \cite{Jo1}
Let a compact connected Lie group $G$ act freely by isometries
on a compact Riemannian manifold $(Q,\kappa)$.
Suppose that the geodesic flow of $\kappa$ is completely integrable.
If $\reg T^*Q$ intersects the space of horizontal vectors $\mathcal H\cong\Phi^{-1}(0)$
in a dense set then the geodesic flow on $Q/G$
endowed with the submersion metric $K$ is completely integrable.
\end{cor}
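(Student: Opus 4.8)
The plan is to read this as the special case $M=T^*Q$ of the previous theorem, with the geodesic Hamiltonian playing the role of $h$, and then to identify the resulting reduced flow with the submersion geodesic flow by invoking the cotangent bundle reduction of Section 3.4.

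First I would set up the Hamiltonian picture on $M=T^*Q$. Since $G$ acts on $Q$ by isometries, the geodesic Hamiltonian $h(q,p)=\frac12\sum\kappa^{ij}p_ip_j$ is $G$-invariant, and the lifted action is Hamiltonian with momentum map $\Phi$ of \eqref{ctg-mm}. As $G$ is compact and acts freely on $Q$, the lifted action on $T^*Q$ is free and proper, and (Section 3.4) $0$ is a regular value of $\Phi$, so $N_0=\Phi^{-1}(0)/G$ is smooth and symplectomorphic to $T^*(Q/G)$. I would then check the hypotheses of the previous theorem: compactness of $Q$ makes each iso-energy level $H^{-1}(h)$ a sphere bundle over $Q$, hence compact; integrability of the geodesic flow is assumed, giving the open dense set $\reg T^*Q$ foliated by invariant tori. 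Finally, the metric (Legendre) identification $\kappa\colon TQ\to T^*Q$ carries $\mathcal H=\Phi_l^{-1}(0)$ to $\Phi^{-1}(0)$, because $\Phi_l(q,\dot q)=\Phi(q,\kappa_q\dot q)$ by \eqref{horizontal} and \eqref{ctg-mm}; under this identification the standing hypothesis ``$\reg T^*Q$ meets $\mathcal H$ densely'' is exactly ``$\reg M$ meets $\Phi^{-1}(0)$ densely''.

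With all hypotheses verified, the previous theorem gives complete integrability of the reduced flow \eqref{reduced-eta} on $(N_0,\omega_0)$. It then remains to recognize this flow as the geodesic flow of the submersion metric $K$ on $Q/G$, which is precisely the cotangent bundle reduction of Section 3.4 together with Theorem \ref{LR}: after passing to $\mathcal H/G\cong T(Q/G)$ and $(T^*Q)_0/G\cong T^*(Q/G)$, the reduced Hamiltonian is the kinetic energy of $(Q/G,K)$ (with vanishing potential), and horizontal geodesics of $\kappa$ project to geodesics of $K$. Hence the reduced system is the geodesic flow of $K$, and it is completely integrable. The only point requiring genuine care---and the main obstacle---is the compatibility of the two identifications used: the Legendre identification $TQ\cong T^*Q$ that sends $\mathcal H$ to $\Phi^{-1}(0)$, and the Marsden--Weinstein identification $N_0\cong T^*(Q/G)$. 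One must confirm they intertwine correctly, so that the dense-intersection hypothesis transports to the reduced picture and the abstract reduced Hamiltonian genuinely equals the submersion kinetic energy; both facts are furnished by Section 3.4, so no new computation is needed.
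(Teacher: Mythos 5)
Your proposal is correct and follows essentially the same route as the paper: the corollary is obtained there precisely by applying the preceding reduction theorem to $M=T^*Q$ with the $G$-invariant geodesic Hamiltonian (compact iso-energy levels coming from compactness of $Q$), transporting the dense-intersection hypothesis through the Legendre identification $\mathcal H=\Phi_l^{-1}(0)\cong\Phi^{-1}(0)$, and then using the cotangent bundle reduction of Section 3.4 to recognize the reduced flow on $N_0\cong T^*(Q/G)$ as the geodesic flow of the submersion metric $K$. Your closing remark about the compatibility of the two identifications is exactly the content the paper delegates to Section 3.4, so nothing is missing.
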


\begin{exm}
Eschenburg constructed bi-quotients $M^7_{k,l,p,q}=SU(3)//T_{k,l,p,q}$
endowed with the submersion metrics $ds^2_{t,sub}$
with strictly positive sectional curvature.
Here $T_{k,l,p,q}\cong T^1 \subset T^2\times T^2$,
where $T^2$ is a maximal torus and $ds^2_t$ is a one-parameter family
of left-invariant metrics on $SU(3)$ (see \cite{E}).
One can prove that the geodesic flows of the metrics $ds^2_t$
are completely integrable and that we can apply Theorem \ref{submersion}
to get the integrability of the geodesic flows of the submersion metrics on $M^7_{k,l,p,q}$.
\end{exm}

Suppose we are given Hamiltonian $G$-actions
on two symplectic manifolds $(M_1,\omega_1)$ and $(M_2,\omega_2)$
with moment maps $\Phi_{M_1}$ and $\Phi_{M_2}$.
Then we have the natural diagonal action of $G$
on the product $(M_1\times M_2,\omega_1\oplus\omega_2)$, with moment map
\begin{equation} \label{c.1}
\Phi_{M_1\times M_2}=\Phi_{M_1}+\Phi_{M_2}.
\end{equation}

If $(Q_1,\kappa_1)$ and $(Q_2,\kappa_2)$ have integrable geodesic flows,
then $(Q_1\times Q_2, \kappa_1\oplus \kappa_2)$ also has integrable geodesic flow.
Using (\ref{c.1}), one can easily see that
if the $G$-actions on $Q_1$ and $Q_2$ are almost everywhere locally free,
and free on the product $Q_1 \times Q_2$,
then a generic horizontal vector of the submersion
$$
Q_1 \times Q_2 \to Q_1 \times_G Q_2=(Q_1\times Q_2)/G
$$
belongs to $\reg T^*(Q_1 \times Q_2)=\reg T^*Q_1 \times \reg T^*Q_2$.
Thus, with the above notation, we get

\begin{thm}
The geodesic flow on $Q_1 \times_G Q_2$, endowed with the submersion metric,
is completely integrable.
\end{thm}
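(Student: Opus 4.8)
The plan is to apply Corollary \ref{submersion} to the product Riemannian manifold $(Q,\kappa)=(Q_1\times Q_2,\kappa_1\oplus\kappa_2)$ carrying the diagonal $G$-action. Most of the hypotheses are immediate: $G$ is compact and connected; it acts on $Q$ by isometries, since it does so on each factor and the metric is the orthogonal sum; the product $Q$ is compact; and by assumption the diagonal action is free on $Q_1\times Q_2$, so $Q/G=Q_1\times_G Q_2$ is a manifold with a well-defined submersion metric. Because the geodesics of a Riemannian product are exactly pairs of geodesics in the factors, the geodesic flow of $(Q,\kappa)$ is completely integrable whenever both factor flows are: a complete set of integrals is furnished by the pullbacks of the integrals of $(Q_1,\kappa_1)$ and $(Q_2,\kappa_2)$. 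This is precisely why the regular set splits as a product, $\reg T^*Q=\reg T^*Q_1\times\reg T^*Q_2$.

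The only substantive point is the density hypothesis of Corollary \ref{submersion}: that $\reg T^*Q$ meets the horizontal space $\mathcal H\cong\Phi^{-1}(0)$ of the diagonal action in a dense set. Here $\Phi=\Phi_{M_1}+\Phi_{M_2}$ by \eqref{c.1}, so $\mathcal H$ is the \emph{diagonal} horizontal space, cut out by $\langle v_1,\xi_{q_1}\rangle+\langle v_2,\xi_{q_2}\rangle=0$ for all $\xi\in\g$, and is \emph{not} the product of the individual horizontal spaces of the two factors. Thus the claim to be verified is exactly the observation recorded just before the theorem: under the hypotheses that the actions on $Q_1$ and $Q_2$ are almost everywhere locally free and that the diagonal action on the product is free, a generic horizontal covector $(v_1,v_2)\in\mathcal H$ has $v_1\in\reg T^*Q_1$ and $v_2\in\reg T^*Q_2$ simultaneously, hence $(v_1,v_2)\in\reg T^*Q$.

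I would establish this fiberwise. Fix $q=(q_1,q_2)$ at which both factor actions are locally free (a dense, full-measure condition by the almost-everywhere local freeness). In the fiber $\mathcal H_q$, consider the projection $\pr_1:\mathcal H_q\to T^*_{q_1}Q_1$. Given any $v_1$, local freeness at $q_2$ forces the map $\xi\mapsto\xi_{q_2}$ to be injective, so one can always solve $\langle v_2,\xi_{q_2}\rangle=-\langle v_1,\xi_{q_1}\rangle$ for a covector $v_2$; hence $\pr_1$ is surjective, and likewise $\pr_2$ using local freeness at $q_1$. A surjective linear map is open, so it pulls back the closed nowhere-dense non-regular locus of each cotangent fiber to a closed nowhere-dense subset of $\mathcal H_q$; the union of the two such subsets is again nowhere dense, and its complement is a dense open subset of $\mathcal H_q$ contained in $\reg T^*Q$. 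Letting $q$ vary over the dense locally-free base locus yields the required density of $\reg T^*Q\cap\mathcal H$ in $\mathcal H$. I expect this genericity step to be the main obstacle, as it is the only place where the precise freeness assumptions are genuinely used; once it is in hand, Corollary \ref{submersion} applies to $(Q,\kappa)$ verbatim and gives complete integrability of the geodesic flow of the submersion metric on $Q_1\times_G Q_2$, as claimed.
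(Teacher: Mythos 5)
Your strategy is the same as the paper's: the paper proves this theorem by noting that the product flow is integrable with $\reg T^*(Q_1\times Q_2)=\reg T^*Q_1\times \reg T^*Q_2$, that the diagonal moment map is the sum \eqref{c.1}, that under the freeness hypotheses a generic horizontal covector is regular, and then invoking Corollary~\ref{submersion}; the paper dismisses the genericity step with ``one can easily see''. Your elaboration of precisely that step contains a genuine gap. You pull back ``the closed nowhere-dense non-regular locus of each cotangent fiber'', but nothing in your hypotheses guarantees that the non-regular locus meets the fiber $T^*_{q_1}Q_1$ in a set that is nowhere dense \emph{in that fiber}. Density of $\reg T^*Q_1$ in the total space $T^*Q_1$ only says its complement is nowhere dense in $T^*Q_1$; that complement can still have nonempty fiberwise interior (it can even contain entire cotangent fibers) over a meager set of base points, and your choice of $q=(q_1,q_2)$ is constrained only by local freeness, so it can land on exactly such a point. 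At such a $q$ the openness-of-surjective-linear-maps argument yields nothing, because the set being pulled back fails to be nowhere dense in the fiber.

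The repair is to globalize rather than argue fiber by fiber, and it uses only what you already computed. Your surjectivity argument says exactly that, over a point $q_2$ where the action is locally free, the map $\delta p_2\mapsto \bigl(\xi\mapsto \langle \delta p_2,\xi_{q_2}\rangle\bigr)$ from vertical directions of $T^*Q_2$ onto $\g^*$ is surjective; hence $d\Phi$ is surjective there and, for any tangent vector $w_1$ to $T^*Q_1$, one can solve $d\Phi_2(w_2)=-d\Phi_1(w_1)$ with $w_2$ vertical. This means that over the open dense locus where both factor actions are locally free, the restrictions $\pr_1,\pr_2:\Phi^{-1}(0)\to T^*Q_1,\,T^*Q_2$ are \emph{submersions}, hence open maps on the total space. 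Since the preimage of a dense set under a continuous open map is dense, $\pr_1^{-1}(\reg T^*Q_1)\cap\pr_2^{-1}(\reg T^*Q_2)=\Phi^{-1}(0)\cap\reg T^*(Q_1\times Q_2)$ is open and dense in $\Phi^{-1}(0)$ over that locus, which is the density Corollary~\ref{submersion} needs. (Alternatively one can salvage the fiberwise argument by a Baire-category step: the set of base points over which the fiber slice of $\reg T^*Q_i$ is dense in the fiber is residual, hence still dense after intersecting with the locally-free locus; but the submersion version is cleaner.) With either patch, the rest of your proof goes through and coincides with the paper's argument.
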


\begin{exm}
Let a compact Lie group $G$ acts freely by isometries on $(Q,\kappa)$
and let $G_1$ be an arbitrary compact Lie group that contains $G$ as a subgroup.
Let $ds^2_1$ be some left-invariant Riemannian metric on $G_1$ with integrable geodesic flow.
Then $G$ acts in the natural way by isometries on $(G_1,ds^2_1)$.
Therefore, if the geodesic flow on $Q$ is completely integrable,
then the geodesic flows on $Q\times_G Q$ and $Q \times_G G_1$
endowed with the submersion metrics will be also completely integrable.
\end{exm}

\section{Partial Integrability}

In this section we study reductions of the Hamiltonian flows
restricted to their invariant submanifolds (see \cite{Jo2}).
Apparently, the lowering of order in Hamiltonian systems having invariant relations
was firstly studied by Levi-Civita (e.g., see \cite[ch.~X]{LC}).

\subsection{ Hess--Appel'rot System}
The classical example of the system having an invariant relation
is a celebrated Hess--Appel'rot case of a heavy rigid body motion \cite{He, App}.
Recall that the motion of a heavy rigid body around a fixed point,
in the moving frame, is represented by the Euler--Poisson equations
\begin{equation} \label{EP}
\frac{d}{dt}\vec m=\vec m\times \vec \omega+\mathfrak G\mathfrak M\,\vec\gamma\times\vec r,
\quad \frac{d}{dt}\vec\gamma=\vec \gamma\times \vec \omega, \quad \vec\omega=A\vec m,
\end{equation}
where $\vec\omega$ is the angular velocity, $\vec m$ the angular momentum,
$I=A^{-1}$ the inertial tensor, $\mathfrak M$ mass
and $\vec r$ the vector of the mass center of a rigid body;
$\vec\gamma$ is the direction of the homogeneous gravitational field
and $\mathfrak G$ is the gravitational constant.

The equations (\ref{EP}) always have three integrals, the energy,
geometric integral and the projection of angular momentum:
\begin{equation} \label{integrals11}
F_1=\frac12(\vec m,\vec \omega)+\mathfrak M\mathfrak G(\vec r,\vec\gamma),
\quad F_2=(\vec\gamma,\vec\gamma)=1, \quad F_3=(\vec m,\vec \gamma).
\end{equation}
For the integrability we need a forth integral.
There are three famous integrable cases: Euler, Lagrange and Kowalevskaya \cite{AKN, Go}.

Apart of these cases, there are various particular solutions (e.g., see \cite{Go}).
The celebrated is partially integrable Hess--Appel'rot case \cite{He, App}.
Under the conditions:
\begin{equation} \label{conditions}
r_2=0, \quad r_1\sqrt{a_3-a_2}\pm r_3\sqrt{a_2-a_1}=0, \quad A=\diag(a_1,a_2,a_3),
\end{equation}
$a_3>a_2>a_1>0$, system (\ref{EP}) has an invariant relation given by
\begin{equation}  \label{invariant_relation}
F_4=(\vec m,\vec r)=r_1 m_1+r_3 m_3=0.
\end{equation}

The system is integrable up to one quadrature:
the compact connected components of the regular invariant sets
\begin{equation} \label{tori}
F_1=c_1,\quad F_2=1,\quad F_3=c_3,\quad F_4=0
\end{equation}
are tori, but not with quasi-periodic dynamics.
The classical and algebro-geometric integration
can be found in \cite{Go} and \cite{DrGa1}, respectively.

There is a nice geometrical interpretation of the conditions (\ref{conditions})
given by Zhukovski: the intersection of the plane orthogonal to $\vec r$ with the ellipsoid
$$
(A\vec m,\vec m)=const
$$
is a circle \cite{Zh, BoMa}.
If instead of the moving base given by the main axes of the inertia,
we take the moving base $\vec f_1, \vec f_2, \vec f_3$
such that the mass center of a rigid body $\vec r$ is proportional to $\vec f_3$,
\[
\vec r=\rho \vec f_3, \quad \rho=\sqrt{r_1^2+r_3^2},
\]
then the inverse of the inertial operator reads
\[
A=\begin{pmatrix}
a_2 & 0 & a_{13} \\
0 & a_2 & a_{23} \\
a_{13} & a_{23} & a_{33}
\end{pmatrix}
\]
and the invariant relation (\ref{invariant_relation}) is simply given by
\begin{equation} \label{HA_m3}
m_3=0.
\end{equation}

Note that, considered on the whole phase space $T^*SO(3)$ of the rigid body motion,
the function $m_3$ is the momentum mapping of the right $SO(2)$-action-rotations
of the body around the line directed to the center of the mass.

Further, let $\vec R$ and $\vec\Gamma$ be the vectors of the mass center of a rigid body
and the direction of the gravitational field considered in the space frame.
The motion of $\vec R(t)$ is described by the spherical pendulum equations
(see \cite{Zh, BoMa})
\begin{equation} \label{spherical_pendulum}
\frac{d^2}{dt^2} \vec R=-a_2 \rho^2 \mathfrak G\mathfrak M\vec{\Gamma}+\mu \vec R
\end{equation}
Here the Lagrange multiplier $\mu$ is determined from the condition $(\vec R,\vec R)=\rho^2$.

Historical overview, with an application of Levi--Civita ideas
to the Hess--Appel'\-rot system and other classical rigid body problems
can be found in Borisov and Mamaev \cite{BoMa}.

Recently, an interesting construction of a $n$-dimensional variant
of the Hess--Appel'rot system is studied by Dragovi\'c and Gaji\'c \cite{DrGa2}.
Besides, by generalizing \emph{analytical and algebraic properties} of the classical system,
they introduced a class of systems with invariant relations,
so called \emph{Hess--Appel'rot type systems} with remarkable property:
there exist a pair of compatible Poisson structures,
such that the system is Hamiltonian with respect to the first structure
and invariant relations are Casimir functions with respect to the second structure
(for more details see \cite{DrGa2}).
On the other side, considering invariant relation (\ref{HA_m3}),
we are interested in the following problem:
suppose that $h$ is not a $G$-invariant function,
but $M_0$ is still an invariant manifold of the Hamiltonian system (\ref{hamiltonian}).
As a modification of the regular Marsden--Weinstein reduction,
we study (partial) reduction of the Hamiltonian system (\ref{hamiltonian})
from $M_0$ to $N_0$ and relationship between the integrability
of the reduced and nonreduced system.

\subsection{Invariant Relations and Reduction}
Let $G$ be a connected Lie group with a free proper Hamiltonian action
on a symplectic manifold $(M,\omega)$ with the momentum map (\ref{moment_map}).
Assume that $0$ is a regular value of $\Phi$.

Let $\xi_1,\dots,\xi_p$ be the base of $\g$.
Then the zero level set of the momentum mapping (\ref{moment_map}) is given by the equations
\begin{equation} \label{moments}
M_0: \qquad \phi_\alpha=(\Phi,\xi_\alpha)=0, \qquad \alpha=1,\dots,p.
\end{equation}

\begin{thm} \label{main}
\emph{(i)} Suppose that the restriction of $h$ to \eqref{moments} is a $G$-invariant function.
Then $M_0$ is an invariant manifold of the Hamiltonian system \eqref{hamiltonian}
and $X_{h}|_{M_0}$ projects to the Hamiltonian vector field $X_{H_0}$
\begin{equation} \label{projection1}
d\pi_0(X_h)|_x=X_{H_0}|_{y=\pi_0(x)},
\end{equation}
where $H_0$ is the induced function on $N_0$ defined by \eqref{induced_ham}.

\emph{(ii)} The inverse statement also holds: if \eqref{moments}
is an invariant submanifold of the Hamiltonian system
\eqref{hamiltonian}, then the restriction of $h$ to $M_0$ is a
$G$-invariant function and $X_{h}|_{M_0}$ projects to the
Hamiltonian vector field $X_{H_0}$ on $N_0$, where $H_0$ is
defined by \eqref{induced_ham}.
\end{thm}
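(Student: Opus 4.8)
The plan is to base both directions on a single observation: for $x\in M_0$, the field $X_h$ is tangent to $M_0$ if and only if $h|_{M_0}$ has vanishing derivative along the $G$-orbits. Since $0$ is a regular value, $T_x M_0=\ker d\Phi(x)=\bigcap_\alpha\ker d\phi_\alpha(x)$, and the Hamiltonian vector fields $X_{\phi_\alpha}$ are precisely the fundamental fields generating the $G$-action, so they span $T_x(G\cdot x)$. Because $\Phi$ is equivariant and $\mathcal O(0)=\{0\}$, the group preserves $M_0$, hence $T_x(G\cdot x)\subset T_x M_0$. With the conventions $\{f_1,f_2\}=df_1(X_{f_2})$ and $dh_x(\cdot)=\omega_x(\cdot,X_h(x))$, I would first record the bridging identity
$$
d\phi_\alpha(x)(X_h(x))=\{\phi_\alpha,h\}(x)=-\,dh(x)(X_{\phi_\alpha}(x)),\qquad x\in M_0 .
$$
Its left side vanishes for all $\alpha$ exactly when $X_h(x)\in T_xM_0$, while its right side vanishes for all $\alpha$ exactly when $dh(x)$ kills every orbit direction; as those directions are tangent to $M_0$, the latter is the infinitesimal $G$-invariance of $h|_{M_0}$. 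This identity carries both implications.

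For part (i), I would differentiate the invariance relation $h(\exp(t\xi_\alpha)\cdot x)=h(x)$ at $t=0$ to get $dh(x)(X_{\phi_\alpha}(x))=0$, so by the identity above $d\phi_\alpha(x)(X_h(x))=0$ for all $\alpha$, i.e. $X_h(x)\in T_xM_0$ and $M_0$ is invariant. For the projection I would invoke the defining property $i^*\omega=\pi_0^*\omega_0$ of the Marsden--Weinstein form, where $i:M_0\hookrightarrow M$. For $x\in M_0$, $y=\pi_0(x)$ and any $v\in T_xM_0$, using $X_h(x)\in T_xM_0$,
$$
\omega_0(y)\big(d\pi_0(X_h(x)),\,d\pi_0 v\big)=\omega(x)(X_h(x),v)=-\,dh(x)(v)=-\,dH_0(y)(d\pi_0 v)=\omega_0(y)\big(X_{H_0}(y),\,d\pi_0 v\big),
$$
where $dh(x)(v)=d(h|_{M_0})(x)(v)=dH_0(y)(d\pi_0 v)$ holds because $v$ is tangent to $M_0$ and $h|_{M_0}=H_0\circ\pi_0$. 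Since $\pi_0$ is a submersion, $d\pi_0 v$ exhausts $T_yN_0$, so nondegeneracy of $\omega_0$ forces $d\pi_0(X_h(x))=X_{H_0}(y)$, which is \eqref{projection1} and at the same time shows the projection is well defined.

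For part (ii) the argument runs backwards through the same identity: invariance of $M_0$ gives $X_h(x)\in T_xM_0$, so its left side vanishes, whence $dh(x)(X_{\phi_\alpha}(x))=0$ for all $\alpha$ and all $x\in M_0$. As the $X_{\phi_\alpha}(x)$ span $T_x(G\cdot x)$, the function $h$ has vanishing differential along every orbit; because $G$ is connected each orbit is connected, so $h|_{M_0}$ is constant on orbits, i.e. $G$-invariant. This legitimizes defining $H_0$ by \eqref{induced_ham}, and the projection $d\pi_0(X_h)=X_{H_0}$ then follows verbatim from the computation in part (i). I do not expect a serious obstacle here; the only care needed is bookkeeping, namely confirming that $X_{\phi_\alpha}$ is genuinely the orbit generator tangent to $M_0$, keeping the Poisson and symplectic signs straight, and using connectedness of $G$ to upgrade a vanishing orbit-derivative into honest invariance in the converse direction.
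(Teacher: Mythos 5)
Your proposal is correct, and its first half (the invariance equivalence) is exactly the paper's argument: the paper records the same identity $(dh,X_{\phi_\alpha})=\{h,\phi_\alpha\}=-(d\phi_\alpha,X_h)$ on $M_0$ and reads it in both directions, with connectedness of $G$ (assumed in the setup) upgrading infinitesimal invariance to invariance in part (ii), just as you do. Where you genuinely diverge is the projection formula \eqref{projection1}. The paper does not compute with the reduced form at all: it extends $h|_{M_0}$ to a $G$-invariant function $h^*$ on $M$ (this is where closedness of $M_0$ and properness of the action are invoked, explicitly in part (ii)), applies the classical Marsden--Weinstein result to get $d\pi_0(X_{h^*})=X_{H_0}$, writes $\delta=h-h^*=\sum_\alpha\delta_\alpha\phi_\alpha$ near $M_0$, and uses Noether's theorem to show $\{\delta,f\}|_{M_0}=0$ for every $G$-invariant $f$, hence $X_\delta|_x\in T_x(G\cdot x)\cap T_xM_0=\ker d\pi_0|_x$; then $d\pi_0(X_h)=d\pi_0(X_{h^*})+d\pi_0(X_\delta)=X_{H_0}$. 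Your route instead verifies \eqref{projection1} pointwise from the defining property $\pi_0^*\omega_0=\omega|_{M_0}$, tangency of $X_h$ to $M_0$, surjectivity of $d\pi_0$, and nondegeneracy of $\omega_0$. Your computation is more elementary and self-contained: it needs no invariant extension lemma and no appeal to the classical reduction theorem as a black box. What the paper's decomposition buys is the structural statement made right after the theorem -- that $X_h|_{M_0}$, while possibly not $G$-invariant, equals a $G$-invariant field plus a field valued in $\ker d\pi_0$, i.e.\ it is ``invariant modulo the kernel of $d\pi_0$'' -- which is the conceptual reason the symplectic-reduction machinery still applies. Both proofs are sound; yours trades that insight for economy of hypotheses within part (i).
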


In both cases, the Hamiltonian vector field $X_h$ is not assumed
to be $G$-invariant on $M$. Moreover $X_h|_{M_0}$ may not be
$G$-invariant as well. It is invariant modulo the kernel of
$d\pi_0$, which is sufficient the tools of symplectic reduction
are still applicable.

\begin{dfn}
We shall refer to the passing from $\dot x=X_h|_{M_0}$ to
\begin{equation} \label{REDUCED}
\dot y=X_{H_0}
\end{equation}
as a \emph{partial reduction}.
\end{dfn}

The partial reduction can be seen as a special case of the
symplectic reductions studied in \cite{BT, Li} (see also
\cite{LM}, Ch. III). There, the \emph{symplectic reduction} of a
symplectic manifold $(M,\omega)$ is any surjective submersion
$p:N\to P$ of a submanifold $N\subset M$ onto another symplectic
manifold $(P,\Omega)$, which satisfies $p^*\Omega=\omega|_N$.

\begin{proof}[Proof of theorem \ref{main}]
(i) \; The action of $G$ is generated by Hamiltonian vector fields $X_{\phi_\alpha}$.
Since $h|_{M_{0}}$ is $G$-invariant, for $x\in M_0$ we have
\begin{equation} \label{INVARIANT-EQ}
(dh,X_{\phi_\alpha})=\{h,\phi_\alpha\}=-(d\phi_\alpha,X_h)=0, \quad \alpha=1,\dots,p.
\end{equation}
Thus (\ref{moments}) is an invariant submanifold.

Let $h^*$ be an arbitrary $G$-invariant function that coincides with $h$ on $M_{0}$.
Then $X_{h^*}$ is a $G$-invariant vector field on $M_0$
which project to $X_{H_0}$ \cite{MaWe}:
\begin{equation}\label{projection2}
d\pi_0(X_{h^*})|_x=X_{H_0}|_{y=\pi_0(x)}.
\end{equation}

Let $\delta=h-h^*$. From the condition $\delta|_{M_0}=0$ we can
express $\delta$, in a an open neighborhood of (\ref{moments}), as
$
\delta(x)=\sum_{\alpha=1}^p\delta_\alpha(x)\phi_\alpha(x).
$

Now, let $f$ be a $G$-invariant function on $M$.
Since $\{f,\phi_\alpha\}=0$ (Noether's theorem), we get
$$
\{\delta,f\}|_{M_0}=\biggl(\sum_\alpha\{\delta_\alpha,f\}(\phi_\alpha-\eta_\alpha)
+\sum_\alpha \delta_\alpha\{\phi_\alpha,f\}\biggr)\Big|_{M_0}=0.
$$
Thus the Poisson bracket
$\{\delta,f\}|_{M_0}=-(df,X_\delta)|_{M_0}$ vanish for an
arbitrary $G$-invariant function $f$. In other words
\begin{equation} \label{*}
X_\delta|_x \in T_x (G\cdot x)\cap T_x M_0.
\end{equation}

Combining (\ref{projection2}), $X_h=X_{h^*}+X_\delta$ and (\ref{*})
with the well known identity (e.g., see \cite{OR})
$$
T_x (G\cdot x)\cap T_x M_0=T_x (G_\eta \cdot x)=\ker d\pi_0|_x,
$$
we prove the relation (\ref{projection1}).

(ii) \, Suppose that (\ref{moments}) is an invariant submanifold of (\ref{hamiltonian}).
Then (\ref{INVARIANT-EQ}) holds for $x\in M_0$.
Since the action of $G$ is generated by Hamiltonian vector fields $X_{\phi_\alpha}$,
from (\ref{INVARIANT-EQ}) we get that $h$ is invariant
with respect to the infinitesimal action of $G$.
Therefore we have well defined reduced Hamiltonian function $H_0$ on $N_0$.

Since $M_{0}$ is a closed submanifold of $M$ and the action is proper,
we can find a $G$-invariant function $h^*$ on $M$ which coincides with $h$ on $M_0$.
Now, the relation (\ref{projection1}) follows from the proof of item (i).
\end{proof}

An immediate corollary of theorem \ref{main} is

\begin{cor}
Suppose that the partially reduced system \eqref{REDUCED}
is completely integrable in the non-commutative sense
and $N_0$ is almost everywhere foliated on $r$-dimensional isotropic invariant manifolds,
level sets of integrals $F_i$, $i=1,\dots,\allowbreak\dim N_0-r$.
Then $M_0$ is almost everywhere foliated on $(r+\dim G)$-invariant isotropic manifolds
\begin{equation} \label{M}
\mathcal M_c=\{f_i=F_i\circ \pi_0=c_i \, \vert \, i=1,\dots, {\dim N_0-r} \}
\end{equation}
of the system \eqref{hamiltonian}.
\end{cor}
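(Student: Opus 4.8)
The plan is to realise each $\mathcal M_c$ as the $\pi_0$-preimage of an isotropic leaf of the reduced foliation and then verify the three asserted properties — invariance, the dimension $r+\dim G$, and isotropy — one at a time, leaning on Theorem \ref{main} and on the Marsden--Weinstein identity $\omega|_{M_0}=\pi_0^*\omega_0$. Write $\Lambda_c=\{F_i=c_i\mid i=1,\dots,\dim N_0-r\}\subset N_0$ for the $r$-dimensional isotropic leaf, so that $\mathcal M_c=\pi_0^{-1}(\Lambda_c)$.

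First I would check invariance. By Theorem \ref{main}(i) the manifold $M_0$ is invariant for \eqref{hamiltonian} and $d\pi_0(X_h)|_x=X_{H_0}|_{\pi_0(x)}$. Since $f_i=F_i\circ\pi_0$ gives $df_i=dF_i\circ d\pi_0$, for $x\in M_0$ we obtain
$$
X_h(f_i)|_x=dF_i\bigl(d\pi_0(X_h)\bigr)|_x=dF_i(X_{H_0})|_{\pi_0(x)}=X_{H_0}(F_i)|_{\pi_0(x)}=0,
$$
the last equality because the $F_i$ are first integrals of the reduced flow \eqref{REDUCED}. Hence the $f_i$ are integrals of $X_h|_{M_0}$ and each common level set $\mathcal M_c$ is invariant.

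Next, the dimension and isotropy. As the action is free and proper, $\pi_0:M_0\to N_0$ is a submersion whose fibres are the $G$-orbits, of dimension $\dim G$; therefore $\dim\mathcal M_c=\dim\Lambda_c+\dim G=r+\dim G$ on the regular set. For isotropy I would take $u,v\in T_x\mathcal M_c\subset T_xM_0$ and apply the reduction identity to write
$$
\omega_x(u,v)=(\omega_0)_{\pi_0(x)}\bigl(d\pi_0(u),\,d\pi_0(v)\bigr).
$$
Since $u,v$ are tangent to $\mathcal M_c$, their images $d\pi_0(u),d\pi_0(v)$ lie in $T_{\pi_0(x)}\Lambda_c$, and $\Lambda_c$ is isotropic in $(N_0,\omega_0)$; thus the right-hand side vanishes and $\mathcal M_c$ is isotropic in $(M,\omega)$.

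I expect the only genuine subtlety to be bookkeeping around the phrase \emph{almost everywhere}: the projection relation of Theorem \ref{main} and the identity $\omega|_{M_0}=\pi_0^*\omega_0$ hold on all of $M_0$, but the independence of the $f_i$ and the constancy of the leaf dimension hold only over the regular set of the reduced foliation. I would therefore restrict to the $\pi_0$-preimage of that set, which is open and dense in $M_0$ because $\pi_0$ is an open continuous surjection, and note that all three verifications then go through simultaneously. A dimension check confirms consistency: $\dim N_0=\dim M-2\dim G$, so an isotropic leaf in $N_0$ obeys $r\le n-\dim G$, whence $r+\dim G\le n=\tfrac12\dim M$, precisely the bound required of an isotropic submanifold of $M$.
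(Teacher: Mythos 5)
Your proposal is correct and is precisely the argument the paper has in mind: the paper states this as "an immediate corollary" of Theorem \ref{main}, and the three verifications you spell out — invariance of the level sets via the projection relation $d\pi_0(X_h)|_{M_0}=X_{H_0}$, the dimension count along the free $G$-fibres, and isotropy via $\omega|_{M_0}=\pi_0^*\omega_0$ — are exactly the content being taken as immediate. Your handling of the "almost everywhere" issue by passing to the $\pi_0$-preimage of the regular set is also the right (and implicitly assumed) bookkeeping.
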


\begin{rem}
If the partially reduced system (\ref{REDUCED}) is integrable,
then we need ($\dim G$)-additional quadratures for the solving of
$\dot x=X_h|_{M_0}$ (\emph{the reconstruction equations}). If
$r=\frac12 \dim N_0$, the invariant manifolds (\ref{M}) are
Lagrangian.
\end{rem}

\begin{dfn}
We shall say that the Hamiltonian system (\ref{hamiltonian}) is \emph{partially integrable}
if it has invariant relations of the form (\ref{moments})
and that the reduced system (\ref{REDUCED}) is completely integrable
in the non-commutative sense.
\end{dfn}

\begin{exm}
Let $G$ be a torus $\T^p$ and let the reduced flow be completely integrable
in the commutative sense by integrals $F_1,\dots,F_m$, $m=\frac12\dim N_0$.
Consider the regular compact connected component level set of $F_1,\dots,F_m$.
By Liouville's theorem, it is diffeomorphic to a $m$-dimensional torus $\T^m$
with quasi-periodic flow of (\ref{REDUCED}).
Thus the compact connected component $\hat{\mathcal M}_c=\pi^{-1}_0(\T^m)$ of (\ref{M})
is a \emph{torus bundle} over $\T^m$:
\begin{equation} \label{BUNDLE}
\begin{aligned}
\T^p\;\longrightarrow\;\;&\hat{\mathcal M}_c \\
&\;\Big\downarrow{}^{\pi_0} \\
&\;\T^m
\end{aligned}
\end{equation}

Suppose that $f_1,\dots,f_m$ can be extended to commuting $\T^p$-invariant functions
in some $\T^p$-invariant neighborhood $V$ of $\hat{\mathcal M}_c$.
Then, within $V$, $\hat{\mathcal M}_c$ is given by the equations
$$
f_1=c_1,\dots, f_m=c_m, \quad \phi_{1}=0,\dots,\phi_p=0.
$$

From the Noether theorem the functions $\phi_\alpha$
commute with all $\T^n$-invariant functions on $M$
and the following commuting relations hold on $V$:
\begin{gather*}
\{f_a,f_b\}=\{f_a,\phi_\alpha\}=\{\phi_\alpha,\phi_\beta\}=0,\\
 a,b=1,\dots,m, \quad \alpha,\beta=1,\dots,p.
\end{gather*}

Now, as in the case of commutative integrability of Hamiltonian systems $\hat{\mathcal M}_c$
is a Lagrangian torus with tangent space spanned by $X_{f_a}$, $X_{\phi_\alpha}$,
i.e., the bundle (\ref{BUNDLE}) is trivial.

In general, the flow of $\dot x=X_h$ over the torus $\hat{\mathcal M}_c$ is not quasi-periodic:
the vector field $X_{h}$ do not commute
with vector fields $X_{f_1},\dots,X_{f_m},X_{\phi_1},\dots,X_{\phi_p}$
(although Poisson brackets $\{h,f_a\}$, $\{h,\phi_\alpha\}$ vanish on $\hat{\mathcal M}_c$).
\end{exm}

\begin{exm}
To clear up the difference between partial and usual integrability,
let us write down the above problem for the simplest case of $\T^p$-action,
when $M=\T^m\times \R^m \times \T^p\times\R^p=\{(q,p,\varphi,\phi)\}$,
$$
\omega=\sum_{i=1}^m dp_i \wedge dq_i+\sum_{\alpha=1}^pd\phi_\alpha \wedge d\varphi_\alpha,
$$
and the $\T^p$-action is given by translations in $\varphi$ coordinates.
Then the momentum mapping is $\Phi=(\phi_1,\dots,\phi_p)$,
$(M)_0=\{(q,p,\varphi,0)\}$ and the reduced space is
$$
(N_0,\omega_0)=\biggl(\T^m\times \R^m,\sum_{i=1}^m dp_i \wedge dq_i\biggr).
$$

The Hamiltonian equations with Hamiltonian $h(q,p,\varphi,\phi)$ are
\begin{alignat}{3}
\dot q_i&=\frac{\partial h}{\partial p_i},&
\dot p_i&=-\frac{\partial h}{\partial q_i},& i&=1,\dots,m,
\label{c1}\\
\dot\varphi_\alpha&=\frac{\partial h}{\partial \phi_\alpha},\quad&
\dot \phi_i&=-\frac{\partial h}{\partial \varphi_\alpha},\quad&
\alpha&=1,\dots,p.
\label{c2}
\end{alignat}

From (\ref{c2}) it is clear that $M_0$ is an invariant manifold of
the flow if and only if the Hamiltonian $h$ is $\T^p$ invariant on
$M_0$. Let $H_0(q,p)=h(q,p,\varphi,\phi)|_{\Phi=0}$ be the reduced
Hamiltonian. The partially reduced system
\[
\dot q_i= \frac{\partial H_0}{\partial p_i},\quad
\dot p_i=-\frac{\partial H_0}{\partial q_i}, \quad i=1,\dots,m
\]
coincides with non-reduced equation (\ref{c1}) restricted to $M_0$.

If $H_0$ does not depend on action coordinates $p_i$,
the system is quasi-periodic over the tori $p_i=c_i$.
The reconstruction equations in variables $\varphi_\alpha$
are given by (\ref{c2}) for $\Phi=0$.
In general, the partial derivatives $\frac{\partial h}{\partial \phi_\alpha}$
restricted to $M_0$ can depend on $\varphi_\alpha$.
Thus, in general, the system (\ref{c2}) is not solvable
and the motion over invariant Lagrangian tori $p_i=c_i$, $\phi_\alpha=0$ is not quasi-periodic.

If $h$, in addition does not depend on variables $\varphi_\alpha$
on the whole phase space $M$, equations (\ref{c1}), (\ref{c2}) are solvable on $M_0$,
although the system on $M$ may be non-integrable.
\end{exm}

\subsection{Reductions of Additional Symmetries}
Suppose that equations (\ref{moments}) define invariant relations
of the system (\ref{hamiltonian}) with Hamiltonian $h$
and an additional free Hamiltonian action of a connected compact Lie group $K$ is given.
Let
$$
\Psi: M \to \mathfrak k^*
$$
be the corresponding momentum mapping.
Let $\zeta_1,\dots,\zeta_q$ be the base of $\mathfrak k$ and let
\begin{equation} \label{PSI}
\psi_j=(\Psi, \zeta_j), \qquad j=1,\dots,q.
\end{equation}

Further, suppose that the actions of $G$ and $K$ \emph{commute}, that is we have
$$
\{{\phi_\alpha},{\psi_j}\}=0, \qquad \alpha=1,\dots,p,\quad j=1,\dots,q.
$$

The induced $K$-action on $(N_0,\omega_0)$ is also Hamiltonian,
with the momentum mapping $\Psi_0$ satisfying $\Psi|_{M_0}=\Psi_0
\circ \pi_0$.

Denote the projection $M\to M/K$ by $\sigma$.

If $h$ is $K$-invariant, we can reduce the system to the Poisson manifold $M/K$ as well:
\begin{equation} \label{RF}
\dot F=\{F,H\}^K, \qquad F\in C^\infty(M/K),
\end{equation}
where $H$ is the reduced Hamiltonian $H$ ($h=H\circ \sigma$)
and $\{\cdot,\cdot\}^K$ are reduced Poisson bracket.
Since $\phi_\alpha$ are $K$-invariant, we obtain reduced invariant relations
\begin{equation} \label{RIM}
\Phi_\alpha=0, \quad \phi_\alpha=\Phi_\alpha\circ\sigma, \quad \alpha=1,\dots,p
\end{equation}
defining the invariant manifold $M_0/K$ of the reduced flow:
\begin{align*}
M\;\;&\supset\;\;M_0\; \overset{\pi_0}{\longrightarrow}\; N_0=M_0/G\\
^\sigma\Big\downarrow \;\;\;&\qquad\, \Big\downarrow\\
M/K &\supset M_0/K
\end{align*}

On the other side, since $h$ is $K$-invariant,
we have that the reduced Hamiltonian $H_0$ on $N_0$ is also $K$-invariant,
and therefore the momentum mapping $\Psi_0$ is conserved
along the flow of the partially reduced system (\ref{REDUCED}).

Suppose we have additional integrals $\mathcal F$
implying partial integrability with respect to $G$ action.
Then $M_0$ is almost everywhere foliated
on invariant isotropic manifolds $\mathcal M_c$,
level sets of (\ref{PSI}) and integrals $\pi_0^*\mathcal F$ (see Corollary 6.1).

Now, as in the proof of Theorem 5.3 we obtain invariant foliation of $M_0/K$
on manifolds of the form $\mathcal P_c=(K\cdot \mathcal M_c)/K$.
It is clear that $\mathcal P_c$ are isotropic submanifolds
of the symplectic leafs in $(M/K,\{\cdot,\cdot\}^K)$.

\begin{exm}
In the case of Hess--Appelrot system we have $M=T^*SO(3)$,
$G=SO(2)$ (rotations of the body around the vector $\vec r$) and
$K=SO(2)$ (rotations of the body around the vector $\vec\gamma$).

The system is partially integrable with respect to $G$-action:
the reduced system is the spherical pendulum (\ref{spherical_pendulum})
and the reduced phase phase $N_0=T^*S^2$ is foliated on two-dimensional tori,
level sets of the reduced Hamiltonian $H_0$
and momentum mapping of the $G$-action (rotation of the sphere around vector $\vec\Gamma$).
Here the sphere $S^2$ is identified with the positions of the vector $\vec R$.

Therefore, the invariant manifold $M_0\subset T^*SO(3)$ defined by (\ref{HA_m3})
is foliated on $3$-dimensional invariant Lagrangian tori.

The reduction of $K=SO(2)$-symmetry leads to the Euler--Poisson equations (\ref{EP}),
restricted on the invariant set $F_1=(\vec\gamma,\vec\gamma)=1$
(note that $T^*SO(3))/SO(2)\approx\R^3\times S^2$).
The integral $F_3=(\vec m,\vec \gamma)$ is the casimir function.
The invariant $2$-tori (\ref{tori})
can be also seen as a reduction on invariant $3$-tori of $M_0$.

Note that the system (\ref{EP}), (\ref{conditions}), (\ref{invariant_relation})
has no invariant measure.
Namely, by the Euler--Jacobi theorem (e.g., see \cite[p.~131]{AKN}),
the invariant measure together with the integrals (\ref{integrals11}),
(\ref{invariant_relation}) would imply solvability of the system by quadratures.
\end{exm}

\subsection{ Invariant Measure}
By the Liouville theorem, the Hamiltonian flow (\ref{hamiltonian})
preserves the canonical measure $\Omega=\omega^{\dim M/2}$.
Let $L_{X_h}$ denotes the Lie derivative with respect to the flow (\ref{hamiltonian}).
Then
\begin{equation} \label{Liouville}
L_{X_h}\Omega=0.
\end{equation}

We define the restriction of $\Omega$ to $M_0$ as follows (e.g., \cite[p.~28]{AKN}).
Let $\Theta$ be an arbitrary volume form on $M_\eta$.
We can cover an open neighborhood of $M_\eta$
with charts $U_i$ having local coordinate systems of the form
$
(z^i,\phi)=(z^i_1,\dots,z^i_m,\phi_1,\dots,\phi_p),
$
where $\phi_\alpha$ are given by (\ref{moments}). On $U_i$ we have
$$
\Omega^i(z^i,\phi)
=\mu^i(z^i,\phi)\,d\phi_1\wedge d\phi_2\wedge\dots\wedge d\phi_n\wedge\Theta^i(z^i),
$$
for some smooth nonvanishing function $\mu^i$. Then the
restriction of $\Omega$ to $M_\eta$ is the form $\hat\mu\,\Theta$
locally given by
$\mu^i(z^i,\phi)\,\Theta^i(z^i)|_{\phi_\alpha=0}$.

The functions $\phi_\alpha$ are particular integrals of the equations (\ref{hamiltonian}).
Therefore the time derivative of $\phi_\alpha$,
in an open neighborhood of $M_\eta$, is of the form
\begin{equation} \label{dot_g}
\dot \phi_\alpha(x)=\{\phi_\alpha,h\}=\sum_{\beta}\psi_{\alpha\beta}(x)\phi_\alpha,
\end{equation}
where $\psi_{\alpha\beta}$ are smooth functions.
From (\ref{dot_g}) we get
$$
L_{X_h}d\phi_\alpha
=\sum_{\beta}\left(\psi_{\alpha\beta}d\phi_\alpha+\phi_\alpha d\psi_{\alpha\beta}\right).
$$
Whence, at the points of $M_0 \cap U_i$ we have
\begin{equation} \label{trace}
L_{X_h}\Omega^i=d\phi_1\wedge\dots\wedge d\phi_p\wedge L_{X_h}(\mu^i\,\Theta^i)
+\mathrm{tr}\,\psi\,\mu^i\,d\phi_1\wedge d\phi_2\wedge \dots \wedge d\phi_p\wedge\Theta^i,
\end{equation}
where $\mathrm{tr}\,\psi$ is the trace of the matrix $\psi_{\alpha\beta}$.
The relations (\ref{Liouville}), (\ref{trace}) lead to the following proposition:

\begin{prop}
\emph{(i)}
The flow \eqref{hamiltonian} restricted to the invariant manifold \eqref{moments}
preserve the restriction of $\Omega$ to $M_0$
if and only if $\mathrm{tr}\,\psi(x)=0$, for $x\in M_0$.
In particular, if the Hamiltonian $h$ is a $G$-invariant function,
then $\hat\mu\,\Theta$ is an invariant volume form.

\emph{(ii)}
If in addition we have the Hamiltonian action of the compact Lie group $K$
as described above and the flow \eqref{hamiltonian}
restricted to the invariant manifold \eqref{moments}
preserve the restriction of $\Omega$ to $M_0$,
then the reduced flow \eqref{RF} restricted to the invariant manifold \eqref{RIM}
also has the invariant measure.
\end{prop}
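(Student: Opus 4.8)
The plan is to turn the already-derived local identity \eqref{trace} into an intrinsic statement about the restricted volume form $\hat\mu\,\Theta$ on $M_0$. Throughout I use that, by Theorem \ref{main}, $M_0$ is an invariant manifold, so $X_h$ is tangent to $M_0$ and $X_h|_{M_0}$ is a genuine vector field there; consequently the flow of $X_h$ restricts to the flow of $X_h|_{M_0}$, and for every form $\beta$ on $M$ one has $\iota^*\bigl(L_{X_h}\beta\bigr)=L_{X_h|_{M_0}}\bigl(\iota^*\beta\bigr)$, where $\iota\colon M_0\hookrightarrow M$ is the inclusion. In a chart $U_i$ the restricted measure is by definition $\hat\mu\,\Theta=\iota^*(\mu^i\Theta^i)$, so this identity gives $\iota^*\bigl(L_{X_h}(\mu^i\Theta^i)\bigr)=L_{X_h|_{M_0}}(\hat\mu\,\Theta)$.

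First I would prove (i). Since $\Omega$ is Liouville-invariant, \eqref{Liouville} forces $L_{X_h}\Omega^i=0$, so evaluating \eqref{trace} on $M_0$ leaves
\[
d\phi_1\wedge\dots\wedge d\phi_p\wedge L_{X_h}(\mu^i\Theta^i)=-\,\mathrm{tr}\,\psi\;\mu^i\,d\phi_1\wedge\dots\wedge d\phi_p\wedge\Theta^i .
\]
Wedging by $d\phi_1\wedge\dots\wedge d\phi_p$ annihilates every summand of $L_{X_h}(\mu^i\Theta^i)$ containing some $d\phi_\alpha$ and isolates its component along $\Theta^i$, which is exactly what survives under $\iota^*$ (as $\iota^*$ kills every $d\phi_\alpha$). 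Comparing the coefficients of the top form $d\phi_1\wedge\dots\wedge d\phi_p\wedge\Theta^i$ therefore yields, on $M_0$,
\[
L_{X_h|_{M_0}}(\hat\mu\,\Theta)=-\,\mathrm{tr}\,\psi\cdot\hat\mu\,\Theta .
\]
Because $\hat\mu\,\Theta$ is a nowhere-vanishing volume form, $X_h|_{M_0}$ preserves it if and only if $\mathrm{tr}\,\psi=0$ on $M_0$, which is the assertion. For the final clause, a $G$-invariant $h$ satisfies $\{\phi_\alpha,h\}\equiv 0$ by Noether's theorem, so one may take $\psi\equiv 0$ in \eqref{dot_g}; then $\mathrm{tr}\,\psi=0$ and $\hat\mu\,\Theta$ is invariant.

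For (ii) the plan is to push the invariant measure down from $M_0$ to $M_0/K$ using compactness of $K$. Since every $\phi_\alpha$ is $K$-invariant, $K$ preserves $M_0$ and acts there freely and properly; as $h$ is $K$-invariant, $X_h|_{M_0}$ is $K$-equivariant, and its $\sigma|_{M_0}$-projection to $M_0/K$ is precisely the restriction of the reduced flow \eqref{RF} to \eqref{RIM}, because $h=H\circ\sigma$. By hypothesis and item (i), $\hat\mu\,\Theta$ is $X_h|_{M_0}$-invariant; averaging it over $K$ against the normalized Haar measure produces a $K$-invariant volume form $\nu$ on $M_0$ (it stays nonvanishing since $K$ is connected), and because $X_h$ commutes with the $K$-action it remains invariant under $X_h|_{M_0}$. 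Finally, integrating $\nu$ over the compact $K$-orbits gives a well-defined volume form $\bar\nu$ on $M_0/K$ whose invariance under the reduced flow descends from that of $\nu$; this $\bar\nu$ is the required invariant measure.

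The main obstacle is the bookkeeping in (i): making rigorous the ``division by $d\phi_1\wedge\dots\wedge d\phi_p$'', i.e.\ that wedging with this $p$-form is the algebraic counterpart of the pullback $\iota^*$, and checking that $\mathrm{tr}\,\psi|_{M_0}$ is independent of the non-unique choice of the coefficients $\psi_{\alpha\beta}$ in \eqref{dot_g}. The latter is in fact automatic once the relation $L_{X_h|_{M_0}}(\hat\mu\,\Theta)=-\,\mathrm{tr}\,\psi\cdot\hat\mu\,\Theta$ is established, since its left-hand side is intrinsic. Item (ii) is then routine, being the standard descent of an invariant measure under a free action of a compact Lie group.
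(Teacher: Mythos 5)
Your proposal is correct and follows essentially the same route as the paper: part (i) is exactly the paper's intended argument (combine the Liouville identity \eqref{Liouville} with the local formula \eqref{trace}, interpret the wedge with $d\phi_1\wedge\dots\wedge d\phi_p$ as isolating what survives pullback to $M_0$, and read off $L_{X_h|_{M_0}}(\hat\mu\,\Theta)=-\mathrm{tr}\,\psi\cdot\hat\mu\,\Theta$), with the useful extra observations that $X_h$ tangent to $M_0$ lets Lie derivative commute with restriction and that $\mathrm{tr}\,\psi|_{M_0}$ is well defined. For part (ii) the paper simply invokes its Lemma 6.2 (cited from the literature); your Haar-averaging of the flow-invariant measure followed by pushforward to $M_0/K$ is precisely the standard proof of that lemma, so the content is the same.
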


Item (ii) follows from the following general statement (e.g., see \cite{FeJo}):

\begin{lem}
Suppose a compact group $K$ acts freely on a manifold $N$ with local coordinates $z$,
and there is a $K$-invariant dynamical system $\dot z=Z(z)$ on $N$.
If this system has an invariant measure (which is not necessarily $K$-invariant),
then the reduced system on the quotient manifold $N/K$ also has an invariant measure.
\end{lem}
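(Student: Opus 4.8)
The plan is to use the compactness of $K$ in an essential way: first replace the given invariant measure by one that is \emph{simultaneously} $K$-invariant and flow-invariant, and only then push it down to $N/K$. Write $\{\Phi_t\}$ for the (local) flow of $\dot z=Z(z)$, let $\pi:N\to N/K$ be the quotient projection, and let $\mu$ be the given invariant measure, so $\mu(\Phi_t(A))=\mu(A)$ for all measurable $A$. Since $Z$ is $K$-invariant, it descends to a reduced vector field on $N/K$ whose flow $\bar\Phi_t$ satisfies $\pi\circ\Phi_t=\bar\Phi_t\circ\pi$; the goal is to produce an invariant measure for $\bar\Phi_t$.

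The first and decisive step is averaging against a normalized Haar measure $dg$ on $K$:
\[
\bar\mu(A)=\int_K\mu(g\cdot A)\,dg.
\]
Because the action is free and proper with compact fibres, $\bar\mu$ is again locally finite (and, if $\mu$ is a smooth positive density, so is $\bar\mu$), and right-invariance of $dg$ makes it $K$-invariant, $\bar\mu(h\cdot A)=\bar\mu(A)$. The point at which the hypotheses are truly used is flow-invariance: since $Z$ is $K$-invariant the flow commutes with the action, $g\cdot\Phi_t=\Phi_t\cdot g$, whence
\[
\bar\mu(\Phi_t(A))=\int_K\mu(g\cdot\Phi_t(A))\,dg=\int_K\mu(\Phi_t(g\cdot A))\,dg=\int_K\mu(g\cdot A)\,dg=\bar\mu(A),
\]
the middle equality using the commutation and the last one the invariance of $\mu$. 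Thus $\bar\mu$ is at once $K$-invariant and $\Phi_t$-invariant.

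The second step is descent, which is now formal. I would set $\nu=\pi_*\bar\mu$; this is locally finite because the preimage under $\pi$ of a relatively compact set is relatively compact (compactness of $K$ and properness of the action), and it is positive on nonempty open sets since $\bar\mu$ is. Using $\pi\circ\Phi_t=\bar\Phi_t\circ\pi$ and $(\Phi_t)_*\bar\mu=\bar\mu$ one gets
\[
(\bar\Phi_t)_*\nu=(\bar\Phi_t)_*\pi_*\bar\mu=\pi_*(\Phi_t)_*\bar\mu=\pi_*\bar\mu=\nu,
\]
so $\nu$ is an invariant measure for the reduced system. If a smooth invariant volume is wanted rather than an abstract measure, one notes that $\pi$ is a submersion with compact fibres, so fibre integration $\pi_*$ carries the smooth $K$-invariant density $\bar\mu$ to a smooth positive density $\nu$ on $N/K$.

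The only genuinely delicate point is the averaging step. Averaging a measure over $K$ manifestly produces something $K$-invariant, but there is a priori no reason for it to remain invariant under the dynamics; what rescues this is precisely the commutation $g\cdot\Phi_t=\Phi_t\cdot g$, i.e.\ the $K$-invariance of $Z$, which lets one move $\Phi_t$ past $g$ inside the Haar integral. Everything else — $K$-invariance of $\bar\mu$, local finiteness of $\nu$, and the conjugation identity $(\bar\Phi_t)_*\pi_*=\pi_*(\Phi_t)_*$ — is routine, so I expect no further obstacle.
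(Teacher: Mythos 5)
Your proof is correct. The paper itself states this lemma without proof, deferring to Fedorov--Jovanovi\'c \cite{FeJo}, and your argument---averaging the invariant measure over the Haar measure of the compact group $K$, using the commutation $g\cdot\Phi_t=\Phi_t\cdot g$ to preserve flow-invariance under averaging, and then pushing the resulting $K$-invariant, flow-invariant measure down to $N/K$---is precisely the standard argument behind that citation.
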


\subsection{Partial Lagrange--Routh Reductions}
Let $(Q,\kappa,v)$ be a natural mechanical system.
Let $G$ be a connected Lie group acting freely and properly on $Q$ and $\rho:
Q\to Q/G$ be the canonical projection.
As above we define vertical and horizontal distribution
by (\ref{vert}) and (\ref{horizontal}), respectively
(now we do not suppose that $\kappa$ is $G$-invariant).

Since $\kappa_q(\mathcal H_q)=\ann \mathcal V_q$,
we see that $\mathcal H$ is invariant with respect to the ``twisted" $G$-action
\begin{equation} \label{action}
{g}\diamond(q,X)
=(g\cdot q,\,\kappa_{g\cdot q}^{-1}\circ(dg^{-1})^*\circ\kappa_q(X)),\quad X\in T_q Q,
\end{equation}
that is the pull-back of canonical symplectic $G$-action on $T^*Q$ via metric $\kappa$:
\[
\begin{CD}
TQ @>{\kappa}>> T^*Q \\
@V{g\diamond}VV  @VV{g\cdot}V \\
TQ @>>{\kappa}> T^*Q \\
\end{CD}
\]

From Theorem \ref{main} we obtain (see \cite{Jo2})

\begin{thm}
\emph{(i) \ (Partial Noether theorem)}
The horizontal distribution \eqref{horizontal}
is an invariant submanifold of the Euler--Lagrange equations \eqref{Lagrange}
if and only if the potential $v$ and the restriction $\kappa_\mathcal H$
of the metric $\kappa$ to $\mathcal H$
are $G$-invariant with respect to the action \eqref{action}.

\emph{(ii) \ (Partial Lagrange--Routh reduction)}
If $\mathcal H$ is an invariant submanifold of the system $(Q,\kappa,v)$,
then the trajectories $q(t)$ with velocities $\dot q(t)$ that belong to $\mathcal H$
project to the trajectories $b(t)=\pi(q(t))$ of the natural mechanical system $(Q/G,K,V)$
with the potential $V(\pi(q))=v(q)$ and the metric $K$ obtained from $\kappa_\mathcal H$
via identification $\mathcal H/G\approx T(Q/G)$.
\end{thm}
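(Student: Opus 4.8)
The plan is to transport the entire problem from the tangent bundle to the cotangent bundle via the Legendre transformation $\kappa:TQ\to T^*Q$ and then invoke Theorem \ref{main}. Under $\kappa$ the momentum maps satisfy $\Phi\circ\kappa=\Phi_l$, so the horizontal distribution $\mathcal H=\Phi_l^{-1}(0)$ of \eqref{horizontal} is carried precisely onto the zero level set $M_0=\Phi^{-1}(0)$ of the cotangent momentum map \eqref{moments}; the Euler--Lagrange equations \eqref{Lagrange} become the Hamiltonian equations \eqref{hamiltonian} with $h$ as in \eqref{ham}; and, by the commuting diagram defining it, the twisted action \eqref{action} is intertwined with the canonical cotangent $G$-action. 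Consequently the statement ``$\mathcal H$ is invariant under \eqref{Lagrange}'' is equivalent to ``$M_0$ is invariant under $X_h$'', which is exactly the situation analysed in Theorem \ref{main}.

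For part (i), Theorem \ref{main} tells us that $M_0$ is invariant if and only if $h|_{M_0}$ is $G$-invariant for the cotangent action. Pulling this back through $\kappa$, $h|_{M_0}$ corresponds to the restricted energy $E(q,X)=\tfrac12\kappa_{\mathcal H}(X,X)+v(q)$ with $X\in\mathcal H_q$, and the invariance to be tested is that for the twisted action \eqref{action}. Since \eqref{action} is linear on the fibres of $\mathcal H$ and covers the ordinary $G$-action on $Q$, I would separate $E$ by its homogeneity degree in the velocities: the degree-two part forces $\kappa_{\mathcal H}$ to be twisted-invariant, the degree-zero part forces $v$ to be $G$-invariant, and conversely these two invariances reassemble into invariance of $E=h|_{M_0}$. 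This yields the stated equivalence.

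For part (ii), I would combine the projection formula \eqref{projection1} of Theorem \ref{main} with the cotangent bundle reduction of Section 3.4, which supplies a canonical symplectomorphism $N_0=M_0/G\cong T^*(Q/G)$. Because $h$ is fibrewise the sum of a quadratic and a fibre-constant term, its reduction $H_0$ defined by \eqref{induced_ham} is again of natural-mechanical form $H_0=\tfrac12 K^{ij}\bar p_i\bar p_j+V$ on $T^*(Q/G)$; here $V$ is the descent of $v$ and $K$ is read off from the quadratic part, both well defined precisely because $h|_{M_0}$ is $G$-invariant. Theorem \ref{main} then gives $d\pi_0(X_h)|_{M_0}=X_{H_0}$, so each trajectory of $X_h$ lying in $M_0$ projects to a trajectory of $X_{H_0}$; transported back by $\kappa$, a solution $q(t)$ of \eqref{Lagrange} with $\dot q(t)\in\mathcal H$ projects to a trajectory of the reduced natural system $(Q/G,K,V)$, exactly as in the classical Theorem \ref{LR}.

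The main obstacle I expect is the identification $\mathcal H/G\approx T(Q/G)$ together with the verification that the reduced metric $K$ really is obtained from $\kappa_{\mathcal H}$. Unlike the $G$-invariant case of Section 3.4, here $\kappa$ need not be $G$-invariant, so the ordinary differential of the action does not preserve $\mathcal H$ and the naive identification via $d\rho$ fails; one is forced to use the twisted action \eqref{action}. Concretely, I would trace a horizontal vector $X=\kappa_q^{-1}p$ with $p\in\ann\mathcal V_q$ through cotangent reduction to $\bar p\in T^*_{\rho(q)}(Q/G)$ and compute the kinetic part of $H_0$ as $\tfrac12\kappa_{\mathcal H}(X,X)$, thereby exhibiting $K$ as the push-forward of $\kappa_{\mathcal H}$ under $X\mapsto K^{-1}\bar p$ and confirming that this push-forward descends to $Q/G$ thanks to the twisted invariance established in part (i).
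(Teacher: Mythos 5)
Your proposal is correct and takes essentially the same route as the paper: the paper obtains this theorem directly from Theorem \ref{main}, using the fact that the Legendre transformation $\kappa$ carries $\mathcal H=\Phi_l^{-1}(0)$ onto $M_0=\Phi^{-1}(0)$ and intertwines the twisted action \eqref{action} with the canonical cotangent $G$-action, exactly as you set up. Your extra details (the splitting of the restricted energy by homogeneity degree for part (i), and the observation that the identification $\mathcal H/G\approx T(Q/G)$ and the descent of $K$ require the twisted invariance rather than the naive $d\rho$) are correct elaborations of steps the paper leaves implicit.
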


Note that when $\kappa$ is $G$-invariant, the twisted $G$-action (\ref{action})
coincides with usual $G$-action: $g\cdot(q,X)=(g\cdot q,dg(X))$
and the induced metric $K$ is the submersion metric.
In this case we have theorem \ref{LR}.

The partial reduction of the classical Hess--Appel'rot problem
is the spherical pendulum on $S^2$ (\ref{spherical_pendulum}).
The partial reduction of the $n$-dimensional variant
of the system given in \cite{Jo2} is a spherical pendulum on $S^{n-1}$.
It appears that the $4$-dimensional variant of the Hess--Appel'rot system
given by Dragovi\'c and Gaji\'c \cite{DrGa2}
can be also considered within the framework of partial reductions,
where the reduced system is a pendulum system on oriented Grassmannian variety $G^+(4,2)$
of oriented two-dimensional planes in $\R^4$ (see \cite{Jo2}).

The spherical pendulum as well as the pendulum system on Grassmannian variety $G^+(4,2)$
are completely integrable
(the complete integrability of the later system
follows from complete integrability of the pendulum type systems
on adjoint orbits of compact Lie groups established in \cite{BJ5}).

Therefore it is natural to define

\begin{dfn}
We shall say that a natural mechanical system $(Q,\kappa,v)$
satisfies \emph{geometrical Hess--Appel'rot conditions}
if it is partially integrable, i.e, it has an invariant relation (\ref{horizontal})
and the partially reduced system $(Q/G,K,V)$ is completely integrable.
\end{dfn}

\begin{exm}
Let $(Q,\kappa_*)$ be a compact Riemannian manifold with a
$G$-invariant metric. Let $h^*=\frac12(p,\kappa_*^{-1} p)$. The
reduction of the geodesic flow $X_{h^*}$ to
$T^*(Q/G)\approx(T^*Q)_0/G$ is the geodesic flow of the submersion
metric $K$ on $Q/G$. Let $\pr_{\ann\mathcal V}$ be the orthogonal
projection to $\ann\mathcal V_q$ in $T_q^*Q$ with respect to the
metric $\kappa_*^{-1}$, and let $\A$ be an arbitrary metric on the
cotangent bundle. Then $h=h^*+\frac12(\pr_{\ann\mathcal V}
p,\A_qp)$ will be the Hamiltonian function of the certain metric
that we shall denote by $\kappa$. Since $h=h^*|_{(T^*Q)_0}$ we can
perform the partial Lagrange--Routh reduction. Note that the
metrics $\kappa$ and $\kappa_*$ induce the same metric $K$ on
$Q/G$, but their horizontal distributions $\mathcal H$ and
$\mathcal H^*$ are different:
$$
\mathcal H_q=\kappa^{-1}\circ \kappa_*|_q (\mathcal H_q^*).
$$

Further, let us suppose that the geodesic flow of the submersion metric
is completely integrable.
Then the geodesic flow of the metric $\kappa$ is, in general, non-integrable
but has the following interesting property:
$(T^*Q)_0$ is almost everywhere foliated by compact invariant Lagrangian submanifolds.
\end{exm}

\subsection{Complete Integrability on Invariant Manifolds}
The notion of partial integrability introduced here differs from those
based on the Poincare-–Lyapo\-unov-–Nekhoroshev theorem \cite{N2, GG, N3}
(see also \cite{GMS, MPY}), where a Hamiltonian system (\ref{hamiltonian})
restricted to an invariant submanifold $N\subset M$ of lower dimension
is \emph{completely integrable},
i.e., $N$ is filled with periodic or quasi-periodic trajectories of (\ref{hamiltonian}).

Let $(M,\omega)$ be a $2n$-dimensional symplectic manifold.
Let $f_1,\dots,f_k$ be real, Poisson commuting functions on $M$, where $k<n$.
Assume that there exists a compact connected $k$-dimensional manifold $T^k$
invariant under all Hamiltonian flows $X_{f_i}$,
and that $dF$ has rank $k$ on all points of $T^k$, where
$$
F:M \to \R^k, \qquad F=(f_1,\dots,f_k).
$$
Then, as in the Liouville theorem \cite{Ar},
$T^k$ is diffeomorphic to $k$-dimensional torus and $T^k$
is a submanifold of $F^{-1}(\beta_0)$ for some $\beta_0\in\R^k$.

Take a $k$-parameter family of local $(2n-k)$-manifolds $\Sigma_m$,
passing through $m\in T^k$ and transversal to both $T^k$ and $F^{-1}(\beta_0)$ at $m$.
In a neighborhood of $m\in T^k$ consider also the local foliation $\mathcal L^m$
generated by vector fields $X_{f_i}$.

Any homotopy class $\gamma$ in $\pi_1(T^k)$ can be realized as the orbit
of some vector field obtained as a linear combination of $X_{f_i}$:
$$
X_c=\sum_{i=1}^k c_i X_{f_i}
$$
such that $X_c$ has closed trajectories with period 1 on $T^k$.

If we consider the time-one flow of points $p\in\Sigma_m$ under $X_c$,
this defines a map $\Theta_c^m$ from $\Sigma_m$ to $M$.
By construction $\Theta_c^m(m)=m$,
while in general $\Theta_c^m(p)$ can fail to be in $\Sigma_m$.
However, in an appropriate neighborhood of $m$,
the leaf of the foliation $\mathcal L^m$ through $\Theta_c(p)$
intersects $\Sigma_m$ in a unique point $p'$.
In this way we have well defined map
$\Psi_c^m: \Sigma_m \longrightarrow \Sigma_m$, $\Psi_c^m(p)=p'$
(for more details see \cite{N2, GG}).
Following \cite{GG}, we call this the \emph{Poincare--Nekhoro\-shev} map.
It is based at a point $m$ and depends on the constants $c_1,\dots,c_k$,
i.e., on the homotopy class $\gamma$.

With the above notations, we can state the following theorem:

\begin{thm} [Nekhoroshev \cite{N2}]
Suppose that the spectrum of the linear part of the Poincare--Nekhoroshev map $\Psi_c^m$
associated to $X_c$ does not include the unity, for some $m\in T^k$.
Then, in a neighborhood $U$ of $T^k$ in $M$,
there is a symplectic submanifold $N$ which is fibered over a domain $B\subset\R^k$
with fibers been $k$-tori $T^k_\beta N \cap F^{-1}(\beta)$, $\beta\in B$
invariant under the flows of $X_{f_i}$, $i=1,\dots,k$.
\end{thm}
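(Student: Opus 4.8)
The plan is to reduce the construction of $N$ to a persistence-of-fixed-points problem for the Poincaré--Nekhoroshev maps and then assemble the resulting invariant tori into a symplectic submanifold. First I would introduce adapted local coordinates on the transversal: since $\Sigma_m$ is transversal to $F^{-1}(\beta_0)$, the restriction $F|_{\Sigma_m}$ is a submersion near $m$, so I write points of $\Sigma_m$ as $(\beta,u)$ with $\beta\in\R^k$ the value of $F$ and $u\in\R^{2n-2k}$ fibre coordinates, $m=(\beta_0,0)$. Because the $f_i$ Poisson commute, $X_c=\sum_i c_iX_{f_i}$ preserves $F$, and the leaves of $\mathcal L^m$ lie in level sets of $F$; hence each $\Psi_c^m$ preserves the fibration and has the form $\Psi_c^m(\beta,u)=(\beta,\psi_\beta(u))$ with $\psi_{\beta_0}(0)=0$. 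In these coordinates the linear part is block lower-triangular, so the eigenvalue $1$ coming from the $\beta$-directions is automatic; the hypothesis that unity is not in the spectrum is therefore exactly the statement that $A:=\partial_u\psi_{\beta_0}(0)$ satisfies $A-\mathrm{id}$ invertible. Applying the implicit function theorem to the fibrewise equation $\psi_\beta(u)=u$ then yields a unique smooth family of fixed points $\sigma(\beta)$ with $\sigma(\beta_0)=m$, defined for $\beta$ in a neighbourhood $B$ of $\beta_0$.

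The key step is to upgrade this single fixed point into a genuine invariant $k$-torus, and this is where the hypothesis for a single $c$ gets leveraged. I pick generators $\gamma_1,\dots,\gamma_k$ of $\pi_1(T^k)$ and the corresponding $X_{c^{(1)}},\dots,X_{c^{(k)}}$, with $c^{(1)}=c$, giving return maps $\Psi_1,\dots,\Psi_k$ on $\Sigma_m$. These maps all fix $m$, preserve the $F$-fibration, and commute, since they are return maps to a common transversal of the commuting flows $\Phi^t$ of the $\R^k$-action generated by $X_{f_1},\dots,X_{f_k}$. Because $\Psi_j$ commutes with $\Psi_1=\Psi_c$, it sends $\Psi_c$-fixed points to $\Psi_c$-fixed points; as $\sigma(\beta)$ is the \emph{unique} $\Psi_c$-fixed point in its fibre near $m$, while $\Psi_j(\sigma(\beta))$ lies in the same fibre and near $m$, I conclude $\Psi_j(\sigma(\beta))=\sigma(\beta)$ for every $j$. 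Thus $\sigma(\beta)$ is a common fixed point of all $k$ return maps. Each identity $\Psi_j(\sigma)=\sigma$ says that the joint orbit through $\sigma(\beta)$ closes up with a period vector $v_j(\beta)\in\R^k$, $v_j(\beta_0)=c^{(j)}$. Since the $c^{(j)}$ generate the period lattice of $T^k$, continuity keeps $v_1(\beta),\dots,v_k(\beta)$ independent on a possibly smaller $B$; hence the joint orbit through $\sigma(\beta)$ is $\R^k/\Lambda_\beta\cong T^k$, a torus $T^k_\beta\subset F^{-1}(\beta)$ invariant under all $X_{f_i}$. I set $N=\bigcup_{\beta\in B}T^k_\beta$, a smooth $2k$-dimensional submanifold fibred by $F|_N$ over $B$ with $N\cap F^{-1}(\beta)=T^k_\beta$, the smoothness following from that of $\sigma$.

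It remains to check that $N$ is symplectic. At $p=\sigma(\beta)$ the tangent space splits as $T_pN=\Span\{X_{f_1}(p),\dots,X_{f_k}(p)\}\oplus\Span\{\partial_{\beta_1}\sigma,\dots,\partial_{\beta_k}\sigma\}$, the first summand being tangent to the isotropic torus $T^k_\beta$, the second transverse because $dF$ vanishes on the torus directions and is full rank on the $\partial_\beta\sigma$. I would then compute the Gram matrix of $\omega|_{T_pN}$ in this basis: $\omega(X_{f_i},X_{f_j})=\{f_i,f_j\}=0$ on the torus block, while $\omega(X_{f_i},\partial_{\beta_j}\sigma)=-df_i(\partial_{\beta_j}\sigma)=-\partial_{\beta_j}(f_i\circ\sigma)=-\delta_{ij}$, since $f_i(\sigma(\beta))=\beta_i$. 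The resulting $2k\times 2k$ matrix has vanishing upper-left block and invertible off-diagonal blocks $\mp\mathrm{id}_k$, hence is nondegenerate regardless of the transverse block, so $\omega$ restricts to a symplectic form on $N$.

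The main obstacle is the middle step: extracting a full $k$-torus from a non-degeneracy condition imposed on only one of the return maps. The point to get right is that commutativity of the return maps together with the \emph{uniqueness} of the $\Psi_c$-fixed point forces $\sigma(\beta)$ to be fixed by every $\Psi_j$; this is precisely what replaces a separate non-degeneracy hypothesis for each homotopy generator. A secondary technical matter is justifying that the return maps of the commuting flows genuinely commute on a common transversal and that the period vectors $v_j(\beta)$ vary continuously and stay independent, so that the fibres are tori of the expected dimension rather than lower-dimensional orbits.
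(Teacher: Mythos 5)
You should know at the outset that the paper contains no proof of this statement: it is quoted as a known result of Nekhoroshev \cite{N2}, with the detailed geometric proof available in Gaeta \cite{GG}. So there is no internal argument to compare against; what your proposal does is reconstruct, essentially correctly, the standard geometric proof from the literature. Two of your moves deserve explicit endorsement. First, your reinterpretation of the spectral hypothesis is not optional but forced: since $\Psi_c^m$ preserves the level sets of $F$ on $\Sigma_m$, its full linearization at $m$ is block-triangular with an identity $k\times k$ block, so unity always belongs to its spectrum and the theorem as literally stated would be vacuous; the hypothesis only acquires content when read, as you (and Nekhoroshev, and Gaeta) read it, on the fibrewise linearization $A=\partial_u\psi_{\beta_0}(0)$. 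Second, the combination of commutativity of the return maps with the uniqueness clause of the implicit function theorem, which forces $\Psi_j(\sigma(\beta))=\sigma(\beta)$ for every generator, is exactly the mechanism that lets a nondegeneracy assumption on a \emph{single} cycle produce full $k$-tori; this is the heart of the matter and you have it right. (A small remark: you do not actually need the $c^{(j)}$ to generate the period lattice, only to be $\mathbb{R}$-linearly independent with $c^{(1)}=c$, which is convenient since $c$ need not be a primitive lattice vector; your argument only ever uses independence of the $v_j(\beta)$, so nothing breaks.)

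Two genuine but easily repaired gaps remain. (i) You verify nondegeneracy of $\omega|_{T_pN}$ only at points $p=\sigma(\beta)$ of the section. Every point of $N$ is of the form $\Phi^t(\sigma(\beta))$ for the joint flow $\Phi^t$, each $\Phi^t$ is a symplectomorphism, and $\Phi^t$ maps $N$ into itself; so $d\Phi^t$ carries $\bigl(T_{\sigma(\beta)}N,\omega\bigr)$ isomorphically onto $\bigl(T_{\Phi^t(\sigma(\beta))}N,\omega\bigr)$ and nondegeneracy propagates. One sentence, but it must be said. (ii) To conclude that the orbit through $\sigma(\beta)$ is a $k$-torus rather than a lower-dimensional compact orbit, you need the isotropy subgroup of the $\mathbb{R}^k$-action at $\sigma(\beta)$ to be discrete, i.e.\ the fields $X_{f_1},\dots,X_{f_k}$ to stay linearly independent along the whole orbit. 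This holds because the orbit is the image of a fundamental domain of the lattice spanned by $v_1(\beta),\dots,v_k(\beta)$, and by continuity and compactness this image stays inside a neighborhood of $T^k$ where $dF$ has rank $k$, after shrinking $B$; the same remark supplies the "neighborhood $U$ of $T^k$" required in the conclusion. With these two additions your argument is complete and faithful to the proof the paper points to.
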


The theorem extends the Poincare--Lyapounov theorem on persistence
of periodic trajectories (invariant $S^1$ orbits)
 to the case of higher dimensional tori.
Also it extends the Liouville--Arnold theorem
by showing that one can find partial action-angle coordinates on these invariant tori,
i.e. one can define coordinates $(I_j,\varphi_j;p_a,q_a)$, with $j=1,\dots,k,$,
$a= 1,\dots,n-k$, in a open neighborhood of $T^k$
so that the invariant tori correspond to $p=q=0$
and are parametrized by the value of the action coordinates $I_j$ (see \cite{N2,N3}).

\subsection*{Acknowledgments}
I am very grateful to Alexey V.~Bolsinov
(most of the survey is the result of our collaboration),
Vladimir Dragovi\'c, Yuri N.~Fedorov, Borislav Gaji\'c and Milena Radnovi\'c
for support and many interesting discussions on the subject.
The survey is based on the talk given at the Conference
\emph{MM-VII Symmetries and Mechanics}, Novi Sad 2007.
I would like to thank the Organizers for kind hospitality.
Also, I would like to thank the referee for useful suggestions.
The research is supported by the Serbian Ministry of Science,
Project 144014, \emph{Geometry and Topology of Manifolds and Integrable Dynamical Systems}.

\end{document}